\newcommand\sbI{_{\mathrm{I}}}
\newcommand\sbII{_{\mathrm{II}}}
\newcommand\germ{\mathfrak}
\newcommand\newlie[2]{%
  \newcommand#1{{\germ{#2}}}%
}%
\newlie\Gg{g}
\newlie\Gk{k}
\newlie\Gp{p}
\newlie\Gq{q}
\newlie\Ga{a}
\newlie\Gu{u}
\newlie\Gv{v}
\newlie\Gw{w}
\newlie\Gz{z}
\newlie\Gm{m}
\newlie\centralizer{c}
\newcommand\pcentralizer{
  {\centralizer_{\Gp}}%
}
\newcommand\kcentralizer{
  {\centralizer_{\Gk}}%
}
\newcommand\gcentralizer{
  {\centralizer_{\Gg}}%
}
\newcommand\dcentralizer{
  {\centralizer^2_{\Gp}}%
}
\newlie\normalizer{n}
\newlie\Red{R}
\newlie\Ab{A}
\newcommand\newthingie[2]{%
  \begingroup
  \rtA={#2}%
  \rtD={#1}
  \newthingie@A
}
\newcommand\newthingie@A[2][]{%
  \rtB={#1}%
  \rtC={#2}%
  \edef\rtN{\noexpand\newcommand\the\rtA{\the\rtD{\the\rtC}\the\rtB}}%
  \expandafter\endgroup\rtN
}
\let\GO\undefined
\renewcommand\newgroup{\newthingie\mathbf}
\newcommand\newalgebra{\newthingie\germ}
\newgroup\GGL{GL}	\newalgebra\Ggl{gl}
\newgroup\GSL{SL}	\newalgebra\Gsl{sl}
\newgroup\GSp{Sp}	\newalgebra\Gsp{sp}
\newgroup\GSO{SO}	\newalgebra\Gso{so}
\newgroup\GO{O}		\newalgebra\Go{o}
\newgroup\GGO{GO}	\newalgebra\Ggo{go}
\newgroup\GSU{SU}	\newalgebra\Gsu{su}
\newgroup\GU{U}		
\newlie\GG{G}
\newcommand\newscript[2]{%
  \newcommand#1{{\mathcal{#2}}}%
}%
\newscript\Lie{L}
\newscript\Grass{G}
\newscript\Tautological{T}
\newscript\Centralizer{C}
\newscript\Regular{R}
\newscript\Irregular{W}
\newscript\Semisimple{S}
\newscript\Nilpotent{N}
\newscript\Decomposition{D}
\newscript\Orbit{O}
\newscript\HilbertS{H}
\newscript\lhom{L}
\newscript\Struct{O}
\newscript\Incidence{J}
\newscript\Basis{B}
\newscript\Flag{F}
\DeclareMathOperator\rank{rk}
\DeclareMathOperator\ad{ad}
\newcommand\GrassGp{
  {\Grass(r,\Gp)}
}
\newcommand\ProjGp{
  {\P(\Alt^r \Gp)}
}
\newcommand\NilpotentPart[1]{\Nilpotent(#1)}
\newcommand\SemisimplePart[1]{\Semisimple(#1)}
\newcommand\GaN{\NilpotentPart{\Ga}}
\newcommand\GaS{\SemisimplePart{\Ga}}
\let\goodquo=\rightgoodquo
\def\IM{\ILIEV\ and~\MANIVEL}
\def\TY{\TAUVEL\ and~\YU}
\def\RR{Riemann-Roch}
\def\CJ{\CHEVALLEY-\JORDAN}
\newcommand\refcite[2]{\cite[#1]{#2}}
\begin{document}
\author{Michaël Le Barbier Grünewald}
\address{%
Michaël Le Barbier Grünewald\\
Max-Planck-Institut für Mathematik\\
Vivatsgasse 7\\
$53\,111$ Bonn
}
\email{michi@mpim-bonn.mpg.de}
\urladdr{http://guests.mpim-bonn.mpg.de/michi}
\keywords{Reductive Symmetric Pair. Compactification of the Affine Space. Fano
Variety. Decomposition Class.}
\title{The variety of reductions for a reductive symmetric pair}
\date{18 December 2008}



\begin{abstract}
We define and study the variety of reductions for a reductive symmetric
pair~$(G,\theta)$, which is the natural compactification of the set of
the~\CARTAN\ subspaces of the symmetric pair. These varieties generalize 
the varieties of reductions for the \SEVERI\ varieties studied by \ILIEV\ and
\MANIVEL, which are \FANO\ varieties.

We develop a theoretical basis to the study these varieties of reductions, and
relate the geometry of these variety to some problems in representation
theory. A very useful result is the rigidity of semi-simple elements in
deformations of algebraic subalgebras of \LIE\ algebras.

We apply this theory to the study of other varieties of reductions in a
companion paper, which yields two new \FANO\ varieties.
\end{abstract}


\maketitle



%

\section{Introduction}

The problem of classifying all complex analytic compactifications of~$\C^n$
which have second \BETTI\ number~$b_2 = 1$, also known as~\emph{irreducible
compactifications}, was stated by \HIRZEBRUCH~\cite{HIRZEBRUCH}.
An~irreducible compactification is always
a \FANO\ variety, and classifiying the former ones is, surprisingly,
not easier than classifying the later ones. Indeed, the classifications of the
irreducible compactifications of dimension~$1$, $2$ and~$3$ came as
specializations of the classifications of the~\FANO\ varieties of the
corresponding dimension. For the dimension~$4$ or above, there is at the
current time no classification of the irreducible compactifications available
and no classification of the~\FANO\ varieties.  The
irreducible compactifications of the affine spaces of dimension less than~$3$
are the projective spaces~$\P^1$, $\P^2$ and~$\P^3$, the quadric~$\Q^3$, and
two \FANO\ varieties which have~$b_3 = 0$ and respective \FANO\ index~$1$
and~$2$. See~\SMS~\cite{SMS1} for a short description of these
varieties and an account on other results about compactifications of affine
spaces.

As pointed out in~\cite{HIRZEBRUCH}, the fundamental homogenous spaces of
reductive groups are examples of compactifications of the affine space, but
new examples are very hard to find and seem to always show up as finite
families. This lack of examples hinders the efforts aimed at the
classification of \FANO\ varieties or even that of irreducible
compactifications of affine spaces. In this paper we associate to any
\emph{reductive symmetric pair}~$(G,\theta)$, where~$G$ is a reductive complex
group and~$\theta$ an involution of~$G$, its \emph{variety of
reductions}~$\Red$. Some of these varieties were previously obtained by
different means, and studied by~\RANESTAD\ and~\SCHREYER~\cite{RS} and~\ILIEV\
and~\MANIVEL~\cite{ILR}.  At the present time, the study of~seven of these
varieties of reductions~$\Red$ has been carried out, revealing that all of
them are normal \FANO\ varieties, and the smooth ones are even
compactifications of the affine space which have~$b_2 = 1$. This observation
is our main motivation for defining the varieties of reductions:
the study of many of the low-dimensional ones may be carried out, and
yield other compactifications of affine spaces or \FANO\ varieties.

We study here general properties of varieties of reductions for symmetric
pairs, and use this theory in a companion
paper~\cite{COMPANION} to study three more examples of variety of
reductions~(two of them count in the number~seven mentioned above). While we
were primarily interested in developping tools and methods suited to the
practical study of examples, it turned out that the general theory presents
interesting aspects in its own right and relates to other problems in the
representation theory of complex \LIE\ groups. We now define the varieties of
reductions and then outline the results of our study.

\subsubsection{Variety of reductions for a symmetric pair}
A~\emph{reductive symmetric} pair~$(G,\theta)$ has a reductive group~$G$ as
its first member and an involution~$\theta$ of~$G$ as its second member. These
pairs occur in the study of real forms of complex reductive groups and
symmetric spaces, and they were classified using many different invariants,
see S.~Araki~\cite{ARAKI}, A.~G.~Helminck~\cite{HELMINCK},
and~T.~Springer~\cite{SPRINGER}, for instance. To such a pair we attach its
\emph{connected fixed point group}~$K = (G^\theta)^\circ$, which is reductive,
and the
decomposition of the~\LIE\ algebra~$\Gg$ of~$G$ as eigenspaces for the
involution~$\theta'$ tangent to~$\theta$ at the unit element of~$G$:
\[
\Gg = \Gg(\theta')_{1} \oplus \Gg(\theta')_{-1} = \Gk \oplus \Gp
.
\]
This decomposition is called the~\CARTAN\ decomposition of~$\Gg$, and~$\Gp$ 
the \emph{anisotropic space} of the symmetric pair. We will use~\cite{KR}
by~\KOSTANT\ and~\RALLIS\ and~\cite{TY} by~\TY\ as references for results
about the operation of~$K$ in the anisotropic space~$\Gp$.

\begin{definition}
A~\emph{\CARTAN\ subspace} of~$\Gp$ is a linear subspace~$\Ga$ of~$\Gg$ that
is contained in~$\Gp$ and in some \CARTAN\ subalgebra of~$\Gg$, and that is
maximal in the family of such subspaces ordered by the inclusion.
\end{definition}

Any two~\CARTAN\ subspaces of~$\Gp$
are~$K$-conjugated~\cite[Theorem~1]{KR}. Their common
dimension~$r$ is the~\emph{rank} of the symmetric pair~$(G,\theta)$ and their
set~$\Red_o$ is a~$K$-orbit in the \GRASSMANN\ variety~$\Grass(r,\Gp)$
of~$r$-planes in~$\Gp$. 

\begin{definition}
\label{ss:sp:defred}
The variety of reductions~$\Red$ for the symmetric pair~$(G,\theta)$ is the
closure in~$\Grass(r,\Gp)$ of the~set~$\Red_o$ of all \CARTAN\ subspace
of~$\Gp$.
\end{definition}

It is customary, while sometimes ambiguous, to write~$(G,K)$
for~$(G,\theta)$ when referring to a particular symmetric pair. We stick
to this usage, and emphasize that the Cartesian square~$\GG
\times \GG$ of a reductive group~$\GG$ is turned into a symmetric pair by
the involution swapping its two factors. \RANESTAD\ and~\SCHREYER~\cite{RS} have
shown that the variety of reductions for the symmetric pairs~$(\SL_n,\SO_n)$
are smooth only for~$n \le 5$. \ILIEV\ and~\MANIVEL~\cite{ILR} studied the
varieties of reductions for the symmetric pairs~$(\SL_3,\SO_3)$,
$(\SL_3\times\SL_3,\SL_3)$, $(\SL_6,\Sp_6)$ and~$(E_6,F_4)$. These four
symmetric pairs occur as structure symmetries for the four simple \JORDAN\ %
algebras of rank~$3$~\cite{ILR}. In~\cite{COMPANION} we study the varieties of
reductions for~$(\SL_4,\SO_4)$, and for the Cartesian squares of~$\Sp_4$
and~$G_2$.

\subsubsection{Abelian subalgebras}
In the small rank cases~\cite{ILR,ILRGLN,COMPANION} the variety~$\Ab$ of all
$r$-dimensional subalgebras of~$\Gg$ contained in~$\Gp$ is not
larger than~$\Red$, but we show the

\begin{theorem*}[\ref{co:sp:algred}, \ref{pr:sp:aanonalg}]
Every point in~$\Red$ is the~\LIE\ algebra of a subgroup of~$G$. If~$G$ has
large enough rank, then~$\Ab$ contains a point that is not the~\LIE\ algebra of
a subgroup of~$G$.
\end{theorem*}
In general~$\Red$ is a strict irreducible component of~$\Ab$.
We also show that~$\Ab$ consists of infinitely many orbits, while
there is still no evidence that the same can happen for~$\Red$. 

Abelian subalgebras
of~$\Gg$ have been extensively studied, by \SCHUR, \MALCEV, \PANYUSHEV\ and
others, but very little is known about the geometry of~$\Ab$. An enumeration of
its irreducible components is not even at hand.  Further
investigations may confirm that the study of~$\Red$ is easier than the one
of~$\Ab$~is.

\subsubsection{Rigidity of anisotropic subtori}
One of our main result~(\ref{co:sp:ar}) states that, if~$\Ga_0$ is the
degeneration of the conjugates of a subalgebra~$\Ga_1$ of~$\Gg$ contained
in~$\Gp$ under the operation of~$K$, then the semi-simple elements of~$\Ga_0$
are rigid. This means that they are the limits of the semi-simple elements in
the conjugates of~$\Ga_1$. This rigidity theorem enables us to study varieties
of reductions through their~\emph{subvarieties of reductions} (see below), but
also to contribute to the theory of decomposition classes. Decomposition
classes for a reductive symmetric pair generalize \JORDAN\ types
for~$\GL_n$. They were introduced by \BOHRO\ and \KRAFT~\cite{BOHROKRAFT} to
study sheets in
\LIE\ algebras. As an application of our rigidity theorem, we show the

\begin{proposition*}[\ref{th:sp:dcclosure}]
The closure of a decomposition class is a union of decomposition classes.
\end{proposition*}

\subsubsection{Orbit theory for the varieties of reductions}
While the theory is still incomplete, there is some open subset of~$\Red$
whose orbits we understand well, by comparing them to
\emph{decomposition classes} in~$\Gg$.

Sending a point~$x\in \Gg$ to its
centralizer, we define a rational map~$\Centralizer$ from~$\Gg$ to~$\Red$.

\begin{proposition*}[\ref{th:sp:redregular}, \ref{pr:sp:dcgen}, \ref{co:sp:dcgen}]
The map~$\Centralizer$ enjoys the following properties:
\begin{enumerate}

\item
Its pointwise image is an open subset~$\Red_r$ of the
smooth locus of~$\Red$.

\item
The pre-image of an orbit~$\Orbit_0$ in~$\Red_r$ is a
decomposition class~$\Decomposition_0$ in~$\Gg$.

\item
If~$\Decomposition_1$ is a decomposition class in~$\Gg$
containing~$\Decomposition_0$ in its closure, then~$\Centralizer$ is defined
at any point of~$\Decomposition_1$ and the image of~$\Decomposition_1$ is an
orbit containing~$\Orbit_0$ in its closure.  
\end{enumerate}
\end{proposition*}
The problem of describing the genericity relations between decomposition
classes is much easier than the analogous problem for orbits.

The previous proposition describes a bunch of orbits of low-codimension
in~$\Red$, but two important questions remain:
What is the codimension of the complement
of the image of~$\Centralizer$? How intricated are the combinatorics of the
orbits in this complement? The variety of reductions
for~$(\SL_4,\SO_4)$ is smooth, while the centralizer map is not surjective. In
the varieties of reductions for the Cartesian squares of~$\Sp_4$ and~$G_2$,
the image of the centralizer map equals the smooth locus, and its complement
has codimension~$2$.

While we are not yet able to answer these questions,
we noticed an interesting structure in the family of varieties of reductions,
described in terms of \emph{subvarieties of reductions}.

Let~$\Ga$ be a \CARTAN\ subalgebra of~$\Gg$,~$\Ga'$ a subalgebra of~$\Ga$
and~$G'$ its centralizer. Let~$\Red'$ denote the closure of~$G'\Ga$
in~$\Red$. As the data~$(\Ga, \Ga')$ runs through all its possible values,
$\Red'$ describes the set of subvarieties of reductions of~$\Red$. Note that
if~$G'$ is smaller than~$G$, the variety~$\Red'$ is isomorphic to the variety
of \CARTAN\ reductions for the derived group of~$G'$, which has smaller rank
than~$G$.  We can now state our

\begin{theorem*}[\ref{pr:sp:srdescr} and its corollaries]
Let~$x_0$ be a point of~$\Red$. Then:
\begin{enumerate}

\item
$x_0$ is
contained in a strict subvariety of reductions of~$\Red$ if, and only if,
it contains a non-nilpotent element of~$\Gg$.

\item
A point~$x_1$ is more general than~$x_0$ in~$\Red$ if, and only if, any
subvariety of reductions~$\Red'$ containing~$x_0$ also contains
a~$G$-conjugate of~$x_1$ that is more general than~$x_0$ in~$\Red'$.
\end{enumerate}
\end{theorem*}

Hence, if we are able to describe orbits in all the subvarieties of reductions
of~$\Red$, and there is only a finite number of isomorphism classes of them,
we can as well describe orbits in the open
subset of~$\Red$ whose complement is
\[
\Red_n = \set { \Gu \in \Red \mid \text{every $u \in \Gu$ is nilpotent } }
.
\]
\begin{proposition*}[\ref{pr:sp:algclosed}]
The subvariety~$\Red_n$ of~$\Red$ contains the closed orbits
of~$\Red$.  
\end{proposition*}
We may learn soon how to enumerate the closed orbits of~$\Red$,
but the detailed orbit theory of~$\Red_n$
remains very mysterious. In particular we do not know whether it can contain
infinitely many~$K$-orbits or not.

\subsubsection{Partial positivity of the anticanonical class}
The minimal rational curves in~$\Red$ are the lines of the natural projective
embedding of~$\Red$ that are contained in~$\Red$. We can describe the generic
ones in
terms of the roots of the operation of~$\Ga$ on~$\Gg$. This description is
precise enough to let us study the deformations of such a line, find
explicit free curves contained in the smooth locus of~$\Red$ and compute
the intersection of the anticanonical class on these lines:
\begin{corollary*}[\ref{co:sp:pphsmooth}]
Let $\Ga\in\Red_o$ be a general point of~$\Red$ and~$\Delta$ a line through~$\Ga$
contained in~$\Red$. Let~$m$ be the dimension of the maximal linear subspace
of~$\Red$ through~$\Ga$ containing~$\Delta$. If~$\Delta$ is contained in the
smooth locus of~$\Red$, then
\[
  - K_r \cdot \Delta = m + 1
\]
where~$K_r$ is the canonical class of the smooth locus of~$\Red$.
\end{corollary*}
This intersection number is always
positive, it equals~$3$ when~$\Red$ is a variety of~\CARTAN\ reductions.
When~$\Red$ has \PICARD\ number one and its canonical class is a
\CARTIER\ divisor, this yields the \FANO\ index of~$\Red$.

The open orbit in~$\Red$ is affine since the stabilizer of a point therein is
reductive, its complement is therefore a union of divisors. In the examples
we studied~\cite{COMPANION}, the complement of the
image~$\Red_o$ of~$\Centralizer$ has codimension at least~$2$---the codimension~$2$
occuring for the variety of \CARTAN\ reductions for~$G_2$. This means that the
image of the centralizer map carries enough information to describe the
\PICARD\ group of~$\Red$. In the general theory, it
is possible to bound by
above the dimension of the orbit of a point outside of~$\Red_o$, but narrowing
our attention to~$\Red_o$ we would miss a divisor in~$\Red\setminus\Red_o$
swept out by a continuous family of small dimensional orbits, if such a divisor
exists. Hence the flaws in our orbit theory of~$\Red$ obstructs
our understanding of the \PICARD\ group of~$\Red$.


\tableofcontents



\section{Variety of reductions for a symmetric pair}
\label{se:sp:defred}

\subsubsection{Variety of reductions}
Let~$(G,\theta)$ be a reductive symmetric pair with rank~$r$ and~$\GrassGp$ the
\GRASSMANN\ variety of $r$-planes in~$\Gp$.

\begin{definition}
We call the set~$\Red_o$ of \CARTAN\ subspaces of~$\Gp$ the set of
\emph{ordinary reductions} for the symmetric pair~$(G,\theta)$. Its
closure~$\Red$ in~$\GrassGp$ is the~\emph{variety of reductions}
for~$(G,\theta)$, and~$\Red_s = \Red\setminus \Red_o$ is the set of
special reductions.
\end{definition}

The variety of reductions depends only of the isogeny class of~$G$,
not of its fundamental group.

Recall that \GRASSMANN\ varieties are embedded in
projective spaces of some exterior powers, in particular a
reduction~$\Ga\in\Red$ of which~$a\in\Ga^r$ is a basis has
image~$[a_1\wedge\cdots\wedge a_r]\in\ProjGp$.

\subsubsection{Variety of anisotropic subalgebras}
\label{ss:sp:defab}
The variety~$\Red$ of reductions for a reductive symmetric pair is a
subvariety of the variety~$\Ab$ of all anisotropic subalgebras of~$\Gg$ of
dimension~$r$.

Let~$A_r
: \Alt^r \Gp \to \Gk\otimes\Alt^{r-2}\Gp$ be the linear map whose
value on the decomposable~$r$-multivector~$x_1 \wedge \cdots \wedge x_r$ is
\[
A_r(x_1 \wedge \cdots \wedge x_r) =
\sum_{1\le i < j\le l} [x_i,x_j]\otimes
  x_1 \wedge\cdots
      \wedge\hat x_i 
      \wedge\cdots
      \wedge\hat x_j
      \wedge\cdots
      \wedge x_r
\]
where terms with a hat above shall be discarded. The kernel~$\ker A_r$
of~$A_r$ is a linear subspace of~$\ProjGp$
meeting~$\GrassGp$ along the set of subalgebras of~$\Gg$
of dimension~$r$ contained in~$\Gp$.

\begin{definition}
The variety of anistropic subalgebras for the
reductive symmetric pair~$(G,\theta)$ is the section~$\Ab$
of~$\GrassGp$ with~$\ker A_r$ in~$\ProjGp$.
\end{definition}

When the symmetric pair associated to a reductive group is under
consideration, we call~$\Ab$ the variety of abelian subalgebras for
the given group.
We illustrate the presumable complexity of~$\Ab$ in
section~(\ref{se:sp:aa}).

\subsubsection{Roots relative to an ordinary reduction}
Let~$\Ga$ be an ordinary reduction and let~$\Phi$ be the set of weights of~$\Ga$
in~$\Gg$. The decomposition of~$\Gg$ into weight spaces relative to~$\Ga$ is
\begin{equation}
\label{eq:sp:spweight}
  \Gg =
    \gcentralizer(\Ga) 
    \oplus
    \bigoplus_{\alpha\in\Phi} \Gg(\Ga)_\alpha
\end{equation}
where~$\gcentralizer(\Ga)$ is the centralizer of~$\Ga$ in~$\Gg$
and~$\Gg(\Ga)_\alpha$ the weight space for the character~$\alpha$ of~$\Ga$.
The following propositions enable us to work with the
decomposition~(\ref{eq:sp:spweight}), for a proof, see~\TY~\cite[37.5.3, 36.2.1,
38.2.7, 38.7.2]{TY}.

\begin{proposition}
\label{pr:sp:spcentralizer}
The centralizer of~$\Ga$ in~$\Gg$ is the direct sum of~$\Ga$ and the
centralizer of~$\Ga$ in~$\Gk$.
\end{proposition}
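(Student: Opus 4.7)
The plan is to reduce the identity to the single claim $\pcentralizer(\Ga) = \Ga$, then to establish the latter by a root-space analysis combined with an $\mathfrak{sl}_2$-triple argument. First I would observe that $\Ga$ is $\theta'$-stable: it sits inside $\Gp$, on which $\theta'$ acts by $-1$. Hence its centraliser $\gcentralizer(\Ga)$ is also $\theta$-stable, and splitting it according to the eigenvalues of $\theta'$ gives the direct sum decomposition
\[
  \gcentralizer(\Ga) = \pcentralizer(\Ga) \oplus \kcentralizer(\Ga).
\]
Since $\Ga$ is abelian, the inclusion $\Ga \subseteq \pcentralizer(\Ga)$ is automatic, so the whole proposition is equivalent to the equality $\pcentralizer(\Ga) = \Ga$.

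To establish this equality I would embed $\Ga$ into a $\theta$-stable \CARTAN\ subalgebra $\germ{h}$ of $\Gg$; such an embedding exists because $\Ga$ lies in the centre of the $\theta$-stable reductive algebra $\gcentralizer(\Ga)$, so any $\theta$-stable \CARTAN\ subalgebra of $\gcentralizer(\Ga)$ is a \CARTAN\ subalgebra of $\Gg$ containing $\Ga$. Writing $\germ{h} = (\germ{h}\cap\Gk) \oplus (\germ{h}\cap\Gp)$, maximality of $\Ga$ as a \CARTAN\ subspace forces $\germ{h}\cap\Gp = \Ga$. The $\germ{h}$-root decomposition of $\Gg$ then specialises to
\[
  \gcentralizer(\Ga) = \germ{h} \oplus \bigoplus_{\alpha \in \Phi_0} \Gg_\alpha,
\]
where $\Phi_0$ consists of the roots of $(\Gg,\germ{h})$ that vanish on $\Ga$. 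Each such $\alpha$ satisfies $\alpha\circ\theta' = \alpha$, so $\theta'$ stabilises $\Gg_\alpha$; as these root spaces are one-dimensional, each lies either in $\Gk$ or in $\Gp$.

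The main obstacle is to rule out the case $\Gg_\alpha \subseteq \Gp$, which would produce nilpotent elements of $\Gp$ centralising $\Ga$. Assuming for contradiction a non-zero $e \in \Gg_\alpha \cap \Gp$, I would pick $f \in \Gg_{-\alpha}$ with $[e,f] = h_\alpha$. Because $\alpha\circ\theta' = \alpha$, the coroot $h_\alpha$ belongs to the $+1$-eigenspace $\germ{h}\cap\Gk$ of $\theta'$, and applying $\theta'$ to $[e,f] = h_\alpha$ forces $\theta'(f) = -f$, so $f \in \Gp$. The element $e - f$ therefore lies in $\Gp$, commutes with $\Ga$, and is semisimple as an element of the $\mathfrak{sl}_2$-triple $(e,h_\alpha,f)$. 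Consequently $\Ga + \C(e-f)$ is a commutative subspace of $\Gp$ made of semisimple elements, hence contained in some \CARTAN\ subalgebra of $\Gg$; maximality of $\Ga$ then forces $e-f \in \Ga \subseteq \germ{h}$. But $e-f$ lies in $\Gg_\alpha \oplus \Gg_{-\alpha}$, which is transverse to $\germ{h}$, so $e = f = 0$, a contradiction. All root spaces with $\alpha \in \Phi_0$ therefore sit in $\Gk$, and $\pcentralizer(\Ga) = \germ{h}\cap\Gp = \Ga$, completing the proof.
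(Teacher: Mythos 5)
Your argument is correct. Note that the paper does not actually prove this proposition: it is quoted from \TY\ (the reference given is \refcite{37.5.3}{TY} among others), so there is no in-text proof to compare against; what you have written is essentially the classical argument that underlies the cited result. Your reduction to $\pcentralizer(\Ga)=\Ga$ via $\theta'$-stability of $\gcentralizer(\Ga)$ is clean, the existence of a $\theta'$-stable \CARTAN\ subalgebra $\germ{h}\supset\Ga$ with $\germ{h}\cap\Gp=\Ga$ is standard, and the key step --- ruling out a root space $\Gg_\alpha\subset\Gp$ with $\alpha|_\Ga=0$ by producing the semi-simple anisotropic element $e-f$ of the associated $\Gsl_2$-triple, which would enlarge $\Ga$ against maximality --- is exactly the right obstruction to exhibit. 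Two small points you may wish to make explicit: first, that $f\in\Gg_{-\alpha}$ satisfies $\theta'(f)=\pm f$ because $\Gg_{-\alpha}$ is one-dimensional and $\theta'$-stable, and the case $\theta'(f)=f$ is excluded since it would force $h_\alpha=[e,f]=0$; second, that $\Ga+\C(e-f)$ consists of semi-simple elements because it is spanned by commuting semi-simple elements, so that it is a toral subalgebra and hence lies in a \CARTAN\ subalgebra of~$\Gg$, which is what the maximality in the paper's definition of a \CARTAN\ subspace requires. With these spelled out the proof is complete.
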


\begin{proposition}
The set~$\Phi$ is a root system.
\end{proposition}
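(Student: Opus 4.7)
The plan is to realise $\Phi$ as the set of non-zero restrictions to $\Ga$ of a classical root system, and then to verify the root system axioms by transport. First I would extend $\Ga$ to a $\theta$-stable Cartan subalgebra $\germ{h}$ of $\Gg$; any Cartan subalgebra containing $\Ga$ can be made $\theta$-stable, and $\Ga$ is then the $(-1)$-eigenspace of $\theta$ on $\germ{h}$ while $\germ{h}\cap\Gk$ is the $(+1)$-eigenspace. Writing $\Delta\subset\germ{h}^*$ for the root system of $(\Gg,\germ{h})$, the involution $\theta$ normalises $\germ{h}$, its dual $\theta^*$ permutes $\Delta$, and the restriction map $\germ{h}^*\to\Ga^*$ sends $\Delta$ onto $\Phi\cup\{0\}$; the roots restricting to $0$ are precisely the $\theta^*$-fixed ones.

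Next I would verify the root system axioms for $\Phi\subset\Ga^*$. Finiteness and the absence of $0$ are immediate. For the spanning property, observe that a non-zero $a\in\Ga$ lying in $\bigcap_{\alpha\in\Phi}\ker\alpha$ would, in view of the weight decomposition~(\ref{eq:sp:spweight}) and proposition~\ref{pr:sp:spcentralizer}, commute with all of $\Gg$, contradicting the maximality of $\Ga$ as a Cartan subspace of $\Gp$. For each $\alpha\in\Phi$ I would construct the reflection $s_\alpha$ as follows: pick a lift $\tilde\alpha\in\Delta$; if $\theta^*\tilde\alpha=-\tilde\alpha$, the Weyl reflection $s_{\tilde\alpha}$ commutes with $\theta^*$ and restricts to the desired reflection of $\Ga^*$; otherwise a short product of Weyl reflections drawn from the $\theta^*$-orbit of $\tilde\alpha$ yields an element commuting with $\theta^*$ whose restriction to $\Ga$ is the reflection in $\ker\alpha$. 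Integrality of the pairings $\langle\beta,\alpha^\vee\rangle$ on $\Phi$ is inherited from $\Delta$ by restriction.

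The hard part will be the case analysis in the reflection construction, which depends on whether $\tilde\alpha$ and $\theta^*\tilde\alpha$ are proportional, orthogonal, or linked inside $\Delta$; this is precisely the combinatorial content of the Satake diagrams of symmetric pairs. Since the argument is classical and follows \KR, I would defer the detailed verification to~\cite[38.2.7]{TY} rather than reproduce it.
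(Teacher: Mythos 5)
The paper offers no proof of this proposition: it refers the reader to \TY\ \cite[36.2.1, 38.2.7]{TY}, and the argument recorded there is essentially the one you outline --- extend $\Ga$ to a $\theta$-stable Cartan subalgebra $\mathfrak{h}$, identify $\Phi\cup\{0\}$ with the set of restrictions to $\Ga$ of the roots of $(\Gg,\mathfrak{h})$, and manufacture the reflections of $\Phi$ from products of Weyl reflections compatible with $\theta^*$. Your route therefore matches the intended one, and deferring the orbit-by-orbit case analysis to the cited reference is legitimate.

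Two steps nevertheless need repair. First, the spanning argument is a non sequitur for a reductive pair: an element $a\in\Ga$ annihilated by every $\alpha\in\Phi$ centralizes each $\Gg(\Ga)_\alpha$ and also $\gcentralizer(\Ga)=\Ga\oplus\kcentralizer(\Ga)$, hence is central in $\Gg$ --- and a central anisotropic element does not contradict the maximality of $\Ga$; on the contrary it lies in \emph{every} Cartan subspace (compare the proof of~\ref{pr:sp:srcenter}, where every reduction is shown to contain $\Gp_Z$). For the pairs $(\GG\times\GG,\GG)$ with $\GG$ reductive, which the paper explicitly allows, $\Phi$ genuinely fails to span $\Ga^*$, so the proposition must be read as ``$\Phi$ is a root system in the subspace of $\Ga^*$ it spans'', or one must first pass to the derived group via~\ref{pr:sp:srcenter}. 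Second, integrality of the Cartan integers is not ``inherited by restriction'': when the $\theta^*$-orbit of a lift $\tilde\alpha$ has size two, the coroot $\alpha^\vee$ of the restricted root is not the restriction of $\tilde\alpha^\vee$, and $\langle\beta,\alpha^\vee\rangle$ must be recomputed in each of the cases you list (this is exactly where the non-reduced systems of type $BC$ arise), or else obtained directly from $\mathfrak{sl}_2$-theory using a triple with $x\in\Gg(\Ga)_\alpha$, $\theta'x\in\Gg(\Ga)_{-\alpha}$ and $[x,\theta'x]\in\pcentralizer(\Ga)=\Ga$. Both repairs are routine, but as written the integrality claim is the one genuinely unproved step of your outline.
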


We emphasize that this root system may not be reduced, however the root system
relative to the symmetric pair of a reductive group is the root system of the
reductive group itself, and is thus always reduced.

Since~$\theta'$ swaps~$\Gg(\Ga)_\alpha$ and~$\Gg(\Ga)_{-\alpha}$, the \CARTAN\
decomposition and the weight spaces decomposition are not
comparable. It is thus useful to introduce the subspaces
\begin{equation}
  \Gp(\Ga)_\alpha = (\Gg(\Ga)_{\alpha} \oplus \Gg(\Ga)_{-\alpha})\cap\Gp
  \quad\mbox{and}\quad
  \Gk(\Ga)_\alpha = (\Gg(\Ga)_{\alpha} \oplus \Gg(\Ga)_{-\alpha})\cap\Gk
  .
\end{equation}
These subspaces satisfy the relations~$\Gp(\Ga)_\alpha = \Gp(\Ga)_{-\alpha}$,
$\Gk(\Ga)_\alpha = \Gk(\Ga)_{-\alpha}$, and~$\Gp(\Ga)_\alpha \oplus
\Gk(\Ga)_{\alpha} = \Gg(\Ga)_{\alpha}\oplus\Gg(\Ga)_{-\alpha}$. We can then
decompose~$\Gk$ and~$\Gp$ in the following manner:
\begin{equation}
\label{eq:sp:spdecoiso}
  \Gk = \germ{m} \oplus \bigoplus_{\alpha\in\Phi^+} \Gk(\Ga)_{\alpha}
\end{equation}
where~$\germ{m} = \kcentralizer(\Ga)$ is the reductive \LIE\ algebra
centralizing~$\Ga$ in~$\Gk$;
\begin{equation}
\label{eq:sp:spdecoaniso}
  \Gp = \Ga \oplus \bigoplus_{\alpha\in\Phi^+} \Gp(\Ga)_{\alpha}
  .
\end{equation}
The spaces~$\Gk(\Ga)_\alpha$ and~$\Gp(\Ga)_\alpha$ are not stable under~$\Ga$ but
are swapped by it: we have $\ad(\Ga)\Gk(\Ga)_\alpha = \Gp(\Ga)_\alpha$
and~$\ad(\Ga)\Gp(\Ga)_\alpha = \Gk(\Ga)_\alpha$. This allows us to compute the
dimension of~$\Red$:

\begin{proposition}
\label{pr:sp:reddim}
The dimension of~$\Red$ is~$\dim\Gp - r$.
\end{proposition}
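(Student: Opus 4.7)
The plan is to compute $\dim\Red$ as the dimension of the $K$-orbit $\Red_o = K\cdot\Ga$ of an ordinary reduction $\Ga$, since $\Red$ is its closure in $\GrassGp$. This reduces to computing the Lie algebra stabilizer, that is the normalizer $\nnormalizer_\Gk(\Ga) = \{X\in\Gk \mid [X,\Ga]\subset\Ga\}$, and showing that it coincides with $\Gm = \kcentralizer(\Ga)$, after which the decompositions~(\ref{eq:sp:spdecoiso}) and~(\ref{eq:sp:spdecoaniso}) plus the $\theta'$-swap between $\Gg(\Ga)_\alpha$ and $\Gg(\Ga)_{-\alpha}$ finish the count.

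More precisely, I would first observe that for $X\in\Gk$ one has $[X,\Ga]\subset[\Gk,\Gp]\subset\Gp$, so requiring $[X,\Ga]\subset\Ga$ makes sense. Writing $X = X_0 + \sum_{\alpha\in\Phi^+} X_\alpha$ according to~(\ref{eq:sp:spdecoiso}), for any $a\in\Ga$ the bracket $[a,X_\alpha]$ lies in $\Gp(\Ga)_\alpha$, and these subspaces together with $\Ga$ are in direct sum inside $\Gp$ by~(\ref{eq:sp:spdecoaniso}). Hence $[a,X]\in\Ga$ forces $[a,X_\alpha]=0$ for every $\alpha$ and every $a\in\Ga$. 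Since $\ad(\Ga)$ is injective on $\Gk(\Ga)_\alpha$ (an element of $\Gk(\Ga)_\alpha$ decomposes as $Y_+ + Y_-$ with $Y_\pm\in\Gg(\Ga)_{\pm\alpha}$, and $[a,Y_++Y_-] = \alpha(a)(Y_+ - Y_-)$, which vanishes for every $a$ only when $Y_\pm = 0$), each $X_\alpha$ is zero. Thus $\nnormalizer_\Gk(\Ga) = \Gm$.

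From this the orbit dimension is
\[
\dim K\cdot\Ga \;=\; \dim\Gk - \dim\Gm \;=\; \sum_{\alpha\in\Phi^+}\dim\Gk(\Ga)_\alpha.
\]
Next, since $\theta'$ exchanges $\Gg(\Ga)_\alpha$ and $\Gg(\Ga)_{-\alpha}$, the $+1$ and $-1$ eigenspaces of $\theta'$ on $\Gg(\Ga)_\alpha\oplus\Gg(\Ga)_{-\alpha}$ have the same dimension, so $\dim\Gk(\Ga)_\alpha = \dim\Gp(\Ga)_\alpha$. Combining this with~(\ref{eq:sp:spdecoaniso}) yields
\[
\sum_{\alpha\in\Phi^+}\dim\Gk(\Ga)_\alpha \;=\; \sum_{\alpha\in\Phi^+}\dim\Gp(\Ga)_\alpha \;=\; \dim\Gp - r,
\]
and because $\Red = \overline{K\cdot\Ga}$ is irreducible of the same dimension as its open orbit, one concludes $\dim\Red = \dim\Gp - r$.

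The only step that is not bookkeeping is the identification $\nnormalizer_\Gk(\Ga) = \kcentralizer(\Ga)$, which is the Lie-algebraic incarnation of the finiteness of the little Weyl group of the symmetric pair; above it is proved directly from the weight decomposition of~$\Gk$ without invoking that fact.
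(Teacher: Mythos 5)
Your proof is correct and follows essentially the route the paper intends: Proposition~\ref{pr:sp:reddim} is stated there as an immediate consequence of the decompositions~(\ref{eq:sp:spdecoiso}) and~(\ref{eq:sp:spdecoaniso}) together with the fact that $\ad(\Ga)$ swaps $\Gk(\Ga)_\alpha$ and $\Gp(\Ga)_\alpha$, which is exactly the orbit-dimension count you carry out. The only differences are cosmetic: you obtain $\dim\Gk(\Ga)_\alpha=\dim\Gp(\Ga)_\alpha$ from the action of $\theta'$ rather than from $\ad(\Ga)$, and you spell out the identification of the normalizer of $\Ga$ in $\Gk$ with $\Gm=\kcentralizer(\Ga)$, a step the paper leaves implicit.
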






\section{Regular orbits and the incidence diagram}
\label{se:sp:orcd}

\subsubsection{Incidence variety}
\label{ss:sp:incidence}
Let~$\Incidence$ be the incidence variety associated to~$\Red$:
\[
   \Incidence = \set{ (\Gu,u)\in \Red\times\Gp \mid u \in \Gu }
   .
\]
The projections of~$\Incidence$ to~$\Gp$ and~$\Red$ are respectively
denoted by~$\pi$ and~$\tau$. Notice that, since the tautological
fibre bundle on~$\GrassGp$ is locally trivial, the morphism~$\tau$
is open.

\subsubsection{Centralizer map}
An element~$u\in\Gp$ is \emph{regular} when the dimension of its
centralizer~$\Centralizer(u)$ in~$\Gp$ is minimal. It is then an~$r$-dimensional
subalgebra of~$\Gg$ and~the map~$\Centralizer$
is a well-defined morphism from~$\Regular$ to~$\Ab$.
We call~\emph{regular} a reduction in~$\Red_r = \Centralizer(\Regular)$, or a
$K$-orbit through a regular reduction. 

We write~$\Semisimple$ for the set of semi-simple elements in~$\Gp$,
and~$\Nilpotent$ for the set of nilpotent~ones. The set~$\Regular_o = \Regular
\cap \Semisimple$ of \emph{ordinary elements} in~$\Gp$ that are both regular
and semi-simple is an open subset of~$\Regular$, dense in~$\Gp$.
The set of~$r$-dimensional subspaces of~$\Gp$ containing an ordinary element
is an open subspace of~$\GrassGp$. This implies the following

\begin{proposition}
\label{pr:sp:redirreducible}
The variety~$\Red$ of reductions for a reductive symmetric pair is an irreducible
component of the variety~$\Ab$ of anisotropic subalgebras for this pair.
\end{proposition}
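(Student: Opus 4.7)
The plan is to show that $\Red_o$ is a non-empty open subvariety of~$\Ab$; the proposition then follows formally, because the closure of an irreducible non-empty open subset of a variety is automatically an irreducible component. Two preliminary remarks: by~\cite[Theorem~1]{KR} the set $\Red_o$ is a single $K$-orbit in~$\GrassGp$, so $\Red = \overline{\Red_o}$ is irreducible; and since a \CARTAN\ subspace of~$\Gp$ lies by definition in a \CARTAN\ subalgebra of~$\Gg$ it is abelian, so $\Red_o \subset \Ab$, whence $\Red \subset \Ab$ because $\Ab$ is closed in~$\GrassGp$.

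The key step is to establish the equality $\Red_o = \Ab \cap U$, where $U \subset \GrassGp$ denotes the open subset of $r$-planes containing an ordinary element (openness of~$U$ is precisely the property quoted just above the statement). The inclusion $\Red_o \subset \Ab \cap U$ is clear, since every \CARTAN\ subspace contains a non-empty open set of ordinary elements. Conversely, let $\Ga \in \Ab \cap U$ and choose an ordinary $u \in \Ga$: abelianness of~$\Ga$ yields $\Ga \subset \Centralizer(u)$, and regularity of~$u$ forces $\dim \Centralizer(u) = r = \dim \Ga$, so $\Ga = \Centralizer(u)$; semisimplicity of~$u$ then implies that $\Centralizer(u)$ is a \CARTAN\ subspace of~$\Gp$, so $\Ga \in \Red_o$.

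Once $\Red_o = \Ab \cap U$ is known, any irreducible closed subvariety of~$\Ab$ containing~$\Red$ must meet the open subset~$\Red_o$, so by irreducibility it equals the closure of its intersection with~$\Red_o$ and is therefore contained in~$\Red$; this shows that $\Red$ is maximal among the irreducible closed subsets of~$\Ab$, i.e.\ an irreducible component. I do not anticipate a serious obstacle; the only step needing some care is the standard fact, easily extracted from~\cite{KR}, that the centraliser in~$\Gp$ of an ordinary element is a \CARTAN\ subspace, which is what makes the dimension comparison $\dim\Centralizer(u) = r = \dim\Ga$ pin $\Ga$ down to a \CARTAN\ subspace.
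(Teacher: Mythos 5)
Your proof is correct and follows essentially the same route as the paper, which derives the proposition directly from the observation that the set~$U$ of $r$-planes containing an ordinary element is open in~$\GrassGp$; you simply make explicit the identification $\Red_o = \Ab \cap U$ and the formal closure argument that the paper leaves implicit. The one step you flag as needing care --- that the centralizer in~$\Gp$ of a regular semi-simple element is a \CARTAN\ subspace --- is indeed the standard fact the paper relies on, and your dimension comparison handles it correctly.
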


\begin{corollary}
The centralizer of any regular element is a reduction, that
is,~$\Centralizer$ maps~$\Regular$ to~$\Red$.
\end{corollary}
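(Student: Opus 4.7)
The goal is to show $\Centralizer(\Regular) \subseteq \Red$, i.e., that the centralizer of any regular element lies in the closure of the set $\Red_o$ of Cartan subspaces. My strategy is a straightforward density argument: identify a dense subset of $\Regular$ on which the conclusion is obvious, and then use the fact that $\Centralizer$ is a morphism and $\Red$ is closed in $\Ab$ to propagate it to all of $\Regular$.

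First, I would observe that $\Regular$ is an open subvariety of the irreducible variety $\Gp$, hence is itself irreducible. Inside $\Regular$, the subset $\Regular_o = \Regular \cap \Semisimple$ of ordinary (i.e., regular semisimple) elements is open, being cut out by the non-vanishing of the appropriate discriminant; and it is nonempty, since Kostant--Rallis theory guarantees that a generic element of $\Gp$ is regular semisimple. Hence $\Regular_o$ is an open dense subset of the irreducible variety $\Regular$.

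Next, I would recall the standard fact (used implicitly in the definition of the rank) that for any $u \in \Regular_o$, the centralizer $\pcentralizer(u)$ is a Cartan subspace of $\Gp$: indeed, $u$ lies in some Cartan subspace $\Ga$, $\Ga$ centralizes $u$, and by the regularity of $u$ we have $\dim \pcentralizer(u) = r = \dim \Ga$, forcing $\pcentralizer(u) = \Ga$. Therefore $\Centralizer$ maps $\Regular_o$ into $\Red_o \subseteq \Red$.

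Finally, I would invoke the formal pieces: $\Centralizer \colon \Regular \to \Ab$ is a morphism of varieties (hence continuous for the Zariski topology), $\Red$ is closed in $\Ab$ by Definition~\ref{ss:sp:defred}, and $\Regular_o$ is dense in $\Regular$. Combining these yields
\[
\Centralizer(\Regular) \;\subseteq\; \overline{\Centralizer(\Regular_o)} \;\subseteq\; \overline{\Red_o} \;=\; \Red,
\]
which is the claim. There is no real obstacle here; the only thing one must make sure not to slip on is that the relevant closures are taken inside $\Grass(r,\Gp)$, so that the equality $\overline{\Red_o} = \Red$ really is the defining one, and that regularity of $u$ is used precisely to get the dimension equality $\dim \pcentralizer(u) = r$ so that the image actually lands in $\Grass(r,\Gp)$ rather than in a smaller Grassmannian.
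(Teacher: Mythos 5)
Your proof is correct and follows essentially the same route as the paper: the whole content is that $\Regular_o$ is dense in the irreducible set $\Regular$, that $\Centralizer$ sends $\Regular_o$ into $\Red_o$, and that $\Centralizer$ is continuous. The paper packages these facts as the preceding proposition ($\Red$ is an irreducible component of $\Ab$) and deduces the corollary from it, whereas you apply the closure inclusion $\Centralizer(\Regular)\subseteq\overline{\Centralizer(\Regular_o)}=\overline{\Red_o}=\Red$ directly, but the underlying argument is the same.
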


\subsubsection{Incidence diagram}
The incidence variety and the centralizer map fit together in
the~\emph{incidence diagram}:
\[
\begin{matrix}
& \Incidence &\\
\swarrow & & \searrow\\
\usualsetfont{P}(\Gp)& \dashrightarrow & \Red
\end{matrix}
\]
and the centralizer map~$\Centralizer$ equals~$\tau\circ\inv\pi$
above~$\Regular$. We use these constructions to explore the variety of
reductions.

\subsubsection{Regular orbits are contained in the smooth locus}

The centralizer map parametrizes an open and smooth subset of~$\Red$:

\begin{theorem}
\label{th:sp:redregular}
The set~$\Red_r = \Centralizer(\Regular)$ of regular reductions is an open
subset of~$\Red$, contained in the smooth locus.
\end{theorem}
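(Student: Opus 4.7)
The plan is to realize $\Red_r$ as the image, under the smooth projection $\tau$ of the tautological vector bundle restricted to $\Red$, of an open subset of $\Incidence$ that is itself isomorphic to $\Regular$. Openness of $\Red_r$ then follows from openness of $\tau$, and its smoothness descends from smoothness of $\Regular$ along the centralizer morphism.

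First I would show that every $\Gu \in \Red$ is automatically an abelian subalgebra. Since $\Red \subseteq \Ab$ one has $[\Gu,\Gu] \subseteq \Gu \subseteq \Gp$, while $[\Gp,\Gp] \subseteq \Gk$, so $[\Gu,\Gu] \subseteq \Gp \cap \Gk = 0$. Consequently, for $u \in \Regular$ and any $\Gu \in \Red$ with $u \in \Gu$, the space $\Gu$ is an $r$-dimensional abelian subalgebra containing $u$, which forces $\Gu = \Centralizer(u)$. This identifies $\Red_r$ with $\tau(\pi^{-1}(\Regular))$ and makes the restriction of $\pi$ to $\pi^{-1}(\Regular)$ a set-theoretic bijection onto $\Regular$. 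Openness of $\Red_r$ in $\Red$ then follows from openness of $\Regular$ in $\Gp$ together with the openness of $\tau$ noted in the definition of $\Incidence$.

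Next I would promote this bijection to a scheme isomorphism by invoking the already constructed morphism $\Centralizer: \Regular \to \Red$: its graph $\sigma: u \mapsto (\Centralizer(u), u)$ is a section of $\pi$ over $\Regular$, isomorphic onto its image. Since $\Incidence$ is reduced and $\sigma(\Regular)$ has the same underlying set as the open subscheme $\pi^{-1}(\Regular)$, the two coincide as schemes, yielding $\sigma: \Regular \cong \pi^{-1}(\Regular)$. Now $\Incidence$ is the restriction to $\Red$ of the tautological rank-$r$ bundle on $\Grass(r,\Gp)$, hence $\tau$ is a smooth morphism; restricting $\tau$ to $\pi^{-1}(\Regular)$ and precomposing with $\sigma$ identifies this smooth morphism with $\Centralizer: \Regular \to \Red_r$, which is therefore a smooth surjection.

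Finally, $\Regular$ being open in $\Gp$ is smooth, and a smooth surjection from a smooth source has smooth target (\'etale-locally such a morphism is a projection $\Red_r \times \mathbb{A}^m \to \Red_r$, so smoothness of the total space forces smoothness of the base). Thus $\Red_r$ is smooth and lies in the smooth locus of $\Red$. The most delicate step is the scheme-theoretic identification $\pi^{-1}(\Regular) \cong \Regular$, which requires reducedness of $\Incidence$ beyond the pointwise bijection; the descent of smoothness along a smooth surjection, although standard, is the other ingredient worth flagging.
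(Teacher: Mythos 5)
Your proof is correct, but it reaches the smoothness statement by a genuinely different route than the paper. The paper works directly with the equations of~$\Ab$: in the affine chart~$\lhom(\Gu,\Gv)$ of~$\Grass(r,\Gp)$ at~$\Gu=\Centralizer(u_0)$ it writes the linear equations~$[u_1,a(u_2)]=[u_2,a(u_1)]$ of~$T_\Gu\Ab$, specializes~$u_2=u_0$, and uses~$\Gu=\ker(\ad(u_0)|_\Gp)$ to see that~$a$ is determined by~$a(u_0)\in\Gv$; hence~$\dim T_\Gu\Ab\le\dim\Gv=\dim\Red$, and the trivial lower bound forces equality. That computation buys an explicit identification of~$T_\Gu\Red$ with~$\Gp/\Gu$ and shows that~$\Ab$ itself is smooth of dimension~$\dim\Red$ at regular reductions. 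You instead exploit the incidence variety: the observation that any reduction containing a regular element~$u$ is abelian and therefore equals~$\pcentralizer(u)$ makes~$\pi^{-1}(\Regular)\to\Regular$ a bijection, which you correctly upgrade to a scheme isomorphism via reducedness of~$\Incidence$ (the total space of a vector bundle over the reduced~$\Red$) and the fact that a section of a separated morphism is a closed immersion; smoothness of~$\Red_r$ then descends along the smooth surjection~$\tau|_{\pi^{-1}(\Regular)}=\Centralizer$. The descent step is a standard fact (faithfully flat descent of regularity plus regular~$=$~smooth over~$\C$), though your parenthetical is slightly imprecise --- a smooth morphism is \'etale-locally on the source an \'etale map to~$\Red_r\times\mathbb{A}^m$, not a projection --- this does not affect the conclusion. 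Note also that your explicit step identifying~$\Red_r$ with the set of reductions meeting~$\Regular$ is silently needed for the paper's own openness claim, so your write-up actually fills in a detail the paper glosses over; what the soft argument gives up is the concrete description of the tangent space that the paper's chart computation provides.
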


\begin{proof}
The subset of~$\Grass(r,\Gp)$ of subspaces meeting~$\Regular$ is open, which
implies the truth of the first statement. We now study the smoothness.

Let~$u_0\in\Regular$, we study the tangent space to~$\Ab$ at~$\Gu =
\Centralizer(u_0)$. We choose a supplementary~subspace~$\Gv$ to~$\Gu$ in~$\Gp$
and identify the affine neighbourhood of~$\Gu$ in the \GRASSMANN\ %
variety~$\Grass(r,\Gp)$ consisting of points admitting~$\Gv$ for
supplementary with the space~$A = \lhom(\Gu,\Gv)$ of linear maps
from~$\Gu$ to~$\Gv$. A point~$a\in A$ belongs to~$\Ab$ if,
and only if for all~$(u_1,u_2)\in\Gu^2$
\[
  [ u_1 + a (u_1), u_2 + a(u_2) ] = 0;
\]
the linear equations of~$T_\Gu\Ab$ are therefore
\begin{equation}
\label{eq:sp:redtspace}
  [ u_1, a(u_2) ] = [ u_2, a(u_1) ]
\end{equation}
for all~$(u_1,u_2)\in\Gu^2$. We narrow our attention to the subset of the
linear equations of~$T_\Gu\Ab$ obtained by letting~$u_2 = u_0$.
Note that~$\Gu$ is precisely the kernel of the
restriction of~$\ad(u_0)$ to~$\Gp$, hence, once~$u_1$ is given there
is at most one value of~$a(u_1)$ satisfying~(\ref{eq:sp:redtspace}). 
The tangent space is thus
parametrised by a subspace of the set of images~$a(u_0)$, when~$a$ varies
in~$A$ and $T_\Gu\Ab \subset \Gv$. But~$\dim T_\Gu\Ab \ge \dim_\Gu\Ab \ge
\dim\Red$ and~$\dim\Red = \dim\Gv$~(\ref{pr:sp:reddim}). We conclude
that~$\dim T_\Gu\Ab = \dim_\Gu\Ab = \dim\Red$. The regular reductions
are thus contained in the smooth locus of~$\Red$.
\end{proof}

\subsubsection{Review of decomposition classes}
\label{ss:sp:dc}
The centralizer map puts the~$K$-orbits in its image in correspondence
with decomposition classes in~$\Red$. Decomposition classes generalize
the familiar \JORDAN\ types in~$\Gsl_n$ to every symmetric pairs.
They are discussed by~\BOHRO\ and~\KRAFT~\cite{BOHROKRAFT},
\BROER~\cite{BROER} and~\TY~\cite{TY}. We briefly review elements of
this discussion, referring to~\cite{TY} for further details. 

\begin{definition}
Let~$u$ and~$v$ be two elements in~$\Gp$ with
respective~\CHEVALLEY-\JORDAN\ decomposition $u = u_s + u_n$ and~$v = v_s +
v_n$. They have the same \emph{decomposition class} when there exists~$g \in
K$ such that~$g\cdot \pcentralizer(u_s) = \pcentralizer(v_s)$ and~$g
u_n = v_n$. 
\end{definition}
This defines an equivalence relation on~$\Gp$. We
write~$\Decomposition$ for the associated partition
and~$\Decomposition(u)$ for the element of~$\Decomposition$
containing~$u$. Notice that decomposition classes are punctured cones
in~$\Gp$, so that their projectivizations define a partition of
the projective space~$\P(\Gp)$.

This relation is equivalently described with~\emph{double centralizers}. The
double centralizer of~$u\in\Gp$ is
\[
  \dcentralizer(u) = \set{ v \in\Gp \mid
    \forall w \in \pcentralizer(u) \; [v,w] = 0
  }
  .
\]
This is the set of elements in~$\Gp$ whose centralizer in~$\Gp$
contains~$\pcentralizer(u)$. \CHEVALLEY's theorem implies that
\begin{equation}
\label{eq:sp:dcdcentgen}
  \dcentralizer(u)_o = \set{ v \in\Gp \mid
    \pcentralizer(v) = \pcentralizer(u)
  }
\end{equation}
is a dense open subset of~$\dcentralizer(u)$.

\begin{proposition}[\refcite{38.8.3, 39.5.1 and~39.5.4}{TY}]
\label{pr:sp:dcdcent}
For two elements in~$\Gp$, the following statements are equivalent.
\begin{enumerate}
\item
They have the same decomposition class.
\item
Their centralizers in~$\Gp$ are~$K$-conjugate.
\item
Their double centralizers in~$\Gp$ are~$K$-conjugate.
\end{enumerate}
\end{proposition}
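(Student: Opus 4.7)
My plan is to establish the cycle $(1) \Rightarrow (2) \Rightarrow (3) \Rightarrow (1)$. The first implication rests on the identity $\pcentralizer(u) = \pcentralizer(u_s) \cap \pcentralizer(u_n)$, a direct consequence of the functoriality of \CHEVALLEY-\JORDAN\ decomposition in the adjoint representation. If $g \in K$ witnesses $(1)$, then $K$-equivariance of the centralizer also gives $g\pcentralizer(u_n) = \pcentralizer(v_n)$, and intersecting with the hypothesis $g\pcentralizer(u_s) = \pcentralizer(v_s)$ yields $g\pcentralizer(u) = \pcentralizer(v)$. The implication $(2) \Rightarrow (3)$ is then automatic, since $\dcentralizer(u)$ depends only on $\pcentralizer(u)$ and the adjoint action commutes with brackets.

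The main obstacle is $(3) \Rightarrow (1)$, which I plan to split in two. The description $(\ref{eq:sp:dcdcentgen})$ characterises $\dcentralizer(u)_o$ intrinsically as the open dense locus of $\dcentralizer(u)$ where the centralizer equals $\pcentralizer(u)$. A $K$-element $g$ satisfying $g\dcentralizer(u) = \dcentralizer(v)$ therefore sends $\dcentralizer(u)_o$ to $\dcentralizer(v)_o$, and since $u$ itself lies in $\dcentralizer(u)_o$, applying $g$ yields $\pcentralizer(gu) = \pcentralizer(v)$, which is condition $(2)$. This reduces the task to showing that two elements of~$\Gp$ sharing the same centralizer necessarily belong to the same decomposition class.

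For this final reduction, after a $K$-conjugation I may assume $\pcentralizer(u) = \pcentralizer(v) = \Gm$. \CHEVALLEY-\JORDAN\ functoriality then places $u_s, u_n, v_s, v_n$ inside~$\Gm$, and both semisimple parts commute with the whole of~$\Gm$. The hardest task is to produce a single $K$-element mapping $\pcentralizer(u_s)$ to $\pcentralizer(v_s)$ and $u_n$ to $v_n$ simultaneously. I would handle this by invoking the results of \TY~(38.8.3, 39.5.1, 39.5.4), which analyse the partition of~$\Gp$ into decomposition classes relative to a fixed centralizer: their framework identifies $\pcentralizer(u_s)$ canonically up to the action of the $K$-stabiliser of~$\Gm$ via the Jordan stratification of~$\Gm$, and reduces the conjugacy of $u_n$ and $v_n$ to a classical statement about nilpotent orbits in the reductive Lie subalgebra $\gcentralizer(u_s)$, completing the argument.
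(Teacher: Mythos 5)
The paper offers no proof of this proposition at all: it is imported verbatim from Tauvel and Yu (38.8.3, 39.5.1, 39.5.4), so there is no internal argument to measure yours against. Judged on its own terms, your proposal is sound for three of the four implications it actually needs. The identity $\pcentralizer(u)=\pcentralizer(u_s)\cap\pcentralizer(u_n)$ does give $(1)\Rightarrow(2)$; the implication $(2)\Rightarrow(3)$ is formal; and your reduction of $(3)$ to $(2)$ works once the ``intrinsic'' characterization of $\dcentralizer(u)_o$ is made explicit --- it is the locus in $\dcentralizer(u)$ where $\dim\pcentralizer(w)$ is minimal, which any $g\in K$ carrying $\dcentralizer(u)$ to $\dcentralizer(v)$ must preserve, and since $u$ lies in that locus one gets $g\pcentralizer(u)=\pcentralizer(gu)=\pcentralizer(v)$. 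This reorganization, concentrating everything into the single implication ``equal centralizers in $\Gp$ implies equal decomposition class,'' is genuinely useful.

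The gap is that remaining implication $(2)\Rightarrow(1)$, which you dispose of by citing precisely the results of Tauvel and Yu to which the proposition is attributed; as a standalone proof this is circular, and it is exactly where all the content lives. Concretely, from $\pcentralizer(u)=\pcentralizer(v)=\Gm$ you must produce a \emph{single} $g\in K$ with $g\pcentralizer(u_s)=\pcentralizer(v_s)$ and $gu_n=v_n$. That requires two nontrivial inputs you do not supply: first, that $\pcentralizer(u_s)$ is recoverable from $\Gm$ alone (for instance as the centralizer of the space of semisimple elements of $\dcentralizer(u)$, which presupposes that $u_s$ is a sufficiently generic point of that space); second, a conjugacy statement for the nilpotent parts $u_n$ and $v_n$ inside $\gcentralizer(u_s)$ under the stabilizer of that subalgebra in $K$. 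Your sketch gestures at both but proves neither. Within the economy of this paper, deferring to the reference is exactly what the author does, so your write-up is not wrong so much as it is not a proof; a self-contained argument would have to establish $(2)\Rightarrow(1)$ in full.
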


\begin{corollary}[\refcite{39.5.5}{TY}]
\label{co:sp:dclc}
The decomposition class~$\Decomposition(u)$ of~$u\in\Gp$ is~$K \cdot
\dcentralizer(u)_o$. This is an irreducible locally closed subvariety
of~$\Gp$.
\end{corollary}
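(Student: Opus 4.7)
The statement packages three assertions---the set-theoretic identity $\Decomposition(u) = K \cdot \dcentralizer(u)_o$, irreducibility, and local closedness---and I would treat them in that order, relying throughout on Proposition~\ref{pr:sp:dcdcent} and the description~(\ref{eq:sp:dcdcentgen}) of $\dcentralizer(u)_o$.

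For the set identity, note that by~(\ref{eq:sp:dcdcentgen}) membership $v \in \dcentralizer(u)_o$ is equivalent to $\pcentralizer(v) = \pcentralizer(u)$, and Proposition~\ref{pr:sp:dcdcent} then places $v$ in the decomposition class of $u$; since $\Decomposition(u)$ is $K$-stable, the inclusion $K \cdot \dcentralizer(u)_o \subseteq \Decomposition(u)$ follows at once. Conversely, for any $w \in \Decomposition(u)$ Proposition~\ref{pr:sp:dcdcent} supplies $g \in K$ with $g \cdot \pcentralizer(u) = \pcentralizer(w)$, whence $\pcentralizer(g^{-1} w) = \pcentralizer(u)$ and $g^{-1} w \in \dcentralizer(u)_o$. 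Irreducibility then follows from a short chain: $\dcentralizer(u)$ is a linear subspace of $\Gp$, so its open subset $\dcentralizer(u)_o$ is irreducible; the connected group $K$ is irreducible; hence $\Decomposition(u)$, as the image of the irreducible variety $K \times \dcentralizer(u)_o$ under the action map, is irreducible.

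The real work lies in local closedness, since the image under a group action is only constructible a~priori. My plan is to transfer the question to a Grassmannian. Set $d' = \dim \pcentralizer(u)$. Upper semi-continuity of the kernel dimension of $\ad(v)|_\Gp$ shows that $U = \set{ v \in \Gp \mid \dim \pcentralizer(v) = d' }$ is locally closed in $\Gp$, and on $U$ the centralizer assignment $\gamma : v \mapsto \pcentralizer(v)$ is a well-defined morphism to $\Grass(d', \Gp)$. By Proposition~\ref{pr:sp:dcdcent} the decomposition class can be rewritten as
\[
\Decomposition(u) = \gamma^{-1}\bigl(K \cdot \pcentralizer(u)\bigr).
\]
Since every orbit of an algebraic group on a variety is locally closed, $K \cdot \pcentralizer(u)$ is locally closed in $\Grass(d', \Gp)$, and its $\gamma$-preimage exhibits $\Decomposition(u)$ as locally closed in $U$, hence in $\Gp$. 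The main obstacle is precisely this upgrade from constructibility to local closedness, and routing through the centralizer morphism on the constant-dimension locus is what makes it work.
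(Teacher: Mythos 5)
Your proposal is correct. Note, however, that the paper does not actually prove this statement: it is imported wholesale from \TY\ (the citation~39.5.5 attached to the corollary), with Proposition~\ref{pr:sp:dcdcent} and the description~(\ref{eq:sp:dcdcentgen}) of~$\dcentralizer(u)_o$ likewise quoted from that reference. So what you have written is a self-contained derivation where the paper offers only a pointer. Your set-theoretic identity and the irreducibility argument are exactly the routine consequences of the quoted facts: equality of centralizers puts $\dcentralizer(u)_o$ inside $\Decomposition(u)$ by the implication $(2)\Rightarrow(1)$ of~\ref{pr:sp:dcdcent}, the converse uses $(1)\Rightarrow(2)$ together with $K$-equivariance of $v\mapsto\pcentralizer(v)$, and irreducibility follows since $K$ is connected and $\dcentralizer(u)_o$ is a dense open subset of a linear space. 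The genuine content is your local-closedness argument, and it is sound: the locus $U$ where $\dim\ker\bigl(\ad(v)|_{\Gp}\bigr)$ equals $d'$ is locally closed by semi-continuity of the rank, the kernel of a constant-rank family of linear maps varies as a morphism $\gamma:U\to\Grass(d',\Gp)$, and $\Decomposition(u)=\gamma^{-1}\bigl(K\cdot\pcentralizer(u)\bigr)$ is then the preimage of a $K$-orbit, which is locally closed because orbits of algebraic groups always are. This is the standard device for upgrading the a~priori constructibility of an image under a group action to local closedness, and it is essentially the mechanism behind the result in \TY\ as well; your write-up makes explicit the one step (the centralizer morphism on the constant-dimension stratum) that a reader of the paper would otherwise have to chase into the reference.
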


\begin{proposition}[\refcite{39.5.6}{TY}]
The set~$\Decomposition$ of decomposition classes in~$\Gp$ is finite.
\end{proposition}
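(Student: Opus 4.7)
The plan is to combine the characterization of decomposition classes from Proposition~\ref{pr:sp:dcdcent} with the structure of the restricted root system. By that proposition, two elements $u,v\in\Gp$ lie in the same decomposition class if and only if their centralizers $\pcentralizer(u_s)$ and $\pcentralizer(v_s)$ are $K$-conjugate. Hence the assignment $u\mapsto K\cdot\pcentralizer(u_s)$ induces a bijection between $\Decomposition$ and the set of $K$-orbits on subspaces of $\Gp$ arising as $\pcentralizer(u_s)$ for some semi-simple $u_s$. The task reduces to showing that this set of $K$-orbits is finite.

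Fix a Cartan subspace $\Ga$ of $\Gp$. Every semi-simple element of $\Gp$ is $K$-conjugate to an element of $\Ga$, so each $\pcentralizer(u_s)$ is $K$-conjugate to one of the form $\pcentralizer(a)$ with $a\in\Ga$. Combining Proposition~\ref{pr:sp:spcentralizer} with the weight decomposition~\eqref{eq:sp:spdecoaniso}, and using that a weight vector $x\in\Gp(\Ga)_\alpha$ satisfies $[a,x]=0$ if and only if $\alpha(a)=0$, one obtains
\[
   \pcentralizer(a) \;=\; \Ga \,\oplus\, \bigoplus_{\alpha\in\Phi^+,\ \alpha(a)=0} \Gp(\Ga)_\alpha .
\]
Thus $\pcentralizer(a)$ depends only on the subset $\Phi_a=\{\alpha\in\Phi:\alpha(a)=0\}$ of the finite root system~$\Phi$.

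Since $\Phi$ admits only finitely many subsets, only finitely many distinct subspaces $\pcentralizer(a)$ appear as $a$ runs over $\Ga$, and a fortiori only finitely many $K$-conjugacy classes of centralizers arise in~$\Gp$. Combined with the reduction furnished by Proposition~\ref{pr:sp:dcdcent}, this yields the finiteness of $\Decomposition$. The only substantive ingredient is Proposition~\ref{pr:sp:dcdcent} itself, which packages the apparently delicate coupling between the semi-simple and nilpotent data of an element of $\Gp$ into a condition on centralizers alone; once it is invoked, the remaining argument is a direct computation with the restricted root datum of the symmetric pair.
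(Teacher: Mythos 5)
Your reduction in the first paragraph misquotes Proposition~\ref{pr:sp:dcdcent} and this is where the argument breaks. That proposition says $u$ and $v$ lie in the same decomposition class if and only if $\pcentralizer(u)$ and $\pcentralizer(v)$ are $K$-conjugate --- the centralizers of the elements themselves, not of their semi-simple parts. The definition of the equivalence requires \emph{both} $g\cdot\pcentralizer(u_s)=\pcentralizer(v_s)$ \emph{and} $gu_n=v_n$, so the assignment $u\mapsto K\cdot\pcentralizer(u_s)$ is not injective on $\Decomposition$: it forgets the nilpotent part entirely. Concretely, for the pair associated to the Cartesian square of $\SL_2$, the elements $u=0$ and a nonzero nilpotent $v$ both have $\pcentralizer(u_s)=\pcentralizer(v_s)=\Gp$, yet $\Decomposition(0)=\{0\}$ while $\Decomposition(v)$ is the punctured nilpotent cone. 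Your root-system computation is correct as far as it goes, but it only proves that finitely many $K$-conjugacy classes of subspaces $\pcentralizer(u_s)$ occur; it does not bound the number of decomposition classes lying over each such class.

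To close the gap you need a second finiteness ingredient: for a fixed semi-simple $s\in\Ga$, the decomposition classes with semi-simple data $K\cdot\pcentralizer(s)$ are indexed by the nilpotent orbits of the centralizer $C_K(s)$ acting on the anisotropic part of the reductive symmetric pair $(C_G(s),\theta)$ (compare the subvarieties of reductions in Section~\ref{se:sp:sr}). The finiteness of the number of nilpotent orbits in the anisotropic space of a reductive symmetric pair is a theorem of \KR~\cite{KR}; applying it to each of the finitely many centralizer types produced by your root-system argument yields the finiteness of $\Decomposition$. The paper itself offers no proof, only the citation \refcite{39.5.6}{TY}, and the argument there has exactly this two-step structure: finitely many Levi-type centralizers, finitely many nilpotent classes in each.
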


Thus, decomposition classes form a finite partition of~$\Gp$ into
locally closed sets. It follows that any irreducible subvariety~$X$ of~$\Gp$
or~$\P(\Gp)$ is the
closure of~$X\cap\Decomposition_X$ in~$X$, for a unique decomposition
class~$\Decomposition_X\in\Decomposition$. We say~$\Decomposition_X$ is the
\emph{dominant decomposition class} in~$X$.

If~$u$ is regular, semi-simple or nilpotent, elements of~$\Decomposition(u)$
are regular, semi-simple or nilpotent, and we call~$\Decomposition(u)$ a
regular, semi-simple or nilpotent decomposition class.

In~\ref{th:sp:dcclosure} we use our rigidity theorem~\ref{co:sp:ar} to
prove that the closure of a decomposition class is a union of decomposition
classes.

\subsubsection{Genericity relation for regular orbits}
We want to describe the genericity relation for orbits in~$\Red$, that is,
which orbits lie in the closure of which. 
An orbit~$\Orbit_1$ is \emph{more general} than an orbit~$\Orbit_0$
when~$\Orbit_0$ lies in the closure of~$\Orbit_1$; similarly, a decomposition
class~$\Decomposition_1$ is more general than a decomposition
class~$\Decomposition_0$ when~$\Decomposition_0$ is contained in the closure
of~$\Decomposition_1$.

Through the centralizer map, the genericity relation for regular orbits
reduces to the genericity relation for decomposition classes.

\begin{proposition}
\label{pr:sp:dcregular}
The image of a regular decomposition class through the centralizer map is
a~regular~$K$-orbit of~$\Red$.
\end{proposition}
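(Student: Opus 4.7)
The plan is to exhibit the image of a regular decomposition class explicitly as a single $K$-orbit, by combining the description of a decomposition class as $K\cdot\dcentralizer(u)_o$ from Corollary~\ref{co:sp:dclc} with the pointwise characterization of $\dcentralizer(u)_o$ given by~\eqref{eq:sp:dcdcentgen}. The key observation is that $\Centralizer$ is constant along each $\dcentralizer(u_0)_o$, so that $K$-equivariance propagates this to the whole decomposition class.

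Concretely, I would fix a regular element $u_0\in\Decomposition_0$. By~\eqref{eq:sp:dcdcentgen}, every $v\in\dcentralizer(u_0)_o$ satisfies $\pcentralizer(v)=\pcentralizer(u_0)$. In particular, since $\dim\pcentralizer(u_0)=r$ by regularity of $u_0$, every such $v$ is itself regular, and $\Centralizer(v)=\pcentralizer(v)=\pcentralizer(u_0)=\Centralizer(u_0)$. Hence the centralizer map is constant on $\dcentralizer(u_0)_o$. The $K$-equivariance of $\Centralizer$ and Corollary~\ref{co:sp:dclc} then yield
\[
  \Centralizer(\Decomposition_0)
  \;=\;
  \Centralizer\bigl(K\cdot\dcentralizer(u_0)_o\bigr)
  \;=\;
  K\cdot\Centralizer(u_0),
\]
which is a single $K$-orbit in $\Red$. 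Since $\Centralizer(u_0)\in\Red_r$ by the definition of $\Red_r=\Centralizer(\Regular)$, this orbit is a regular $K$-orbit, as required.

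There is no real obstacle in this argument; it is essentially a bookkeeping exercise unwinding Corollary~\ref{co:sp:dclc} and the observation that the centralizer map factors through the equivalence relation defining decomposition classes. The only point that needs to be checked explicitly, and it is immediate from the definitions, is that two elements sharing the same centralizer in $\Gp$ are mapped to the same reduction; everything else follows from $K$-equivariance.
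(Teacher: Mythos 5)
Your argument is correct and is essentially the paper's own proof made explicit: the paper simply cites the equivalence of (1) and (2) in Proposition~\ref{pr:sp:dcdcent} (same decomposition class if and only if $K$-conjugate centralizers), and your unwinding via Corollary~\ref{co:sp:dclc} and~\eqref{eq:sp:dcdcentgen} is precisely that reformulation carried out by hand. No gap; the only detail worth keeping is your remark that every $v$ with $\pcentralizer(v)=\pcentralizer(u_0)$ is itself regular, which justifies applying $\Centralizer$ across the whole class.
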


This is a reformulation of the equivalence between~(1) and~(2)
in~(\ref{pr:sp:dcdcent}). We are then allowed to speak of the
decomposition class associated with a regular orbit: it is its inverse image
through the centralizer map.

\begin{proposition}
\label{pr:sp:dcgen}
Let~$\Red\sbI$ and~$\Red\sbII$ be orbits in~$\Red$. If~$\Red\sbI$ is more
general than~$\Red\sbII$ and~$\Red\sbII$ is regular, then~$\Red\sbI$ is
regular and its decomposition class is more general than the one
of~$\Red\sbII$.
\end{proposition}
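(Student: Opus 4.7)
The plan is to establish the two assertions in turn, both via the incidence diagram.

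First I would show that $\Red\sbI$ is regular. By Theorem~\ref{th:sp:redregular}, the set $\Red_r$ of regular reductions is open in $\Red$, and it is $K$-stable, being the image of the $K$-stable set $\Regular$ under the equivariant centralizer map. Since $\Red\sbII \subset \Red_r \cap \overline{\Red\sbI}$ and $\Red\sbI$ is dense in its closure, $\Red\sbI$ meets $\Red_r$; as $\Red\sbI$ is a single $K$-orbit, $K$-invariance of $\Red_r$ forces $\Red\sbI \subset \Red_r$. Proposition~\ref{pr:sp:dcregular} then attaches a regular decomposition class to each of $\Red\sbI$ and $\Red\sbII$, and the identifications $\Decomposition\sbI = \Centralizer^{-1}(\Red\sbI)$ and $\Decomposition\sbII = \Centralizer^{-1}(\Red\sbII)$ hold because every element sent into $\Red_r$ by $\Centralizer$ is automatically regular.

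Next I would transport the relation $\Red\sbII \subset \overline{\Red\sbI}$ down to the decomposition classes through the incidence variety. Set $\Incidence^r = \Incidence \cap (\Red_r \times \Regular)$, an open subset of $\Incidence$. A regular element $u$ lying in a regular reduction $\Gu$ must satisfy $\pcentralizer(u) = \Gu$: indeed $\Gu \in \Ab$ is abelian so $\Gu \subset \pcentralizer(u)$, and both spaces are $r$-dimensional. This yields $\pi(\tau^{-1}(\Red\sbI) \cap \Incidence^r) = \Decomposition\sbI$, and similarly for $\sbII$. Given $u \in \Decomposition\sbII$, the point $(\Centralizer(u),u)$ lies in $\tau^{-1}(\Red\sbII) \cap \Incidence^r$, and hence in $\tau^{-1}(\overline{\Red\sbI}) \cap \Incidence^r$.

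The crux of the argument is the inclusion
\[
  \tau^{-1}(\overline{\Red\sbI}) \cap \Incidence^r
  \;\subset\;
  \overline{\tau^{-1}(\Red\sbI) \cap \Incidence^r},
\]
after which applying the continuous map $\pi$ together with $\pi(\overline{Y}) \subset \overline{\pi(Y)}$ yields $u \in \overline{\Decomposition\sbI}$, which is the desired genericity of $\Decomposition\sbI$ over $\Decomposition\sbII$. For this inclusion I rely on the openness of $\tau$ recalled in Section~\ref{ss:sp:incidence}: for any point on the left-hand side and any open neighbourhood $U$ of it in $\Incidence^r$, the image $\tau(U)$ is open in $\Red$ and meets $\overline{\Red\sbI}$, hence meets $\Red\sbI$, so $U$ meets $\tau^{-1}(\Red\sbI) \cap \Incidence^r$. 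The only non-formal ingredient in this plan is the openness argument just sketched; the remainder is bookkeeping against the definitions of $\Incidence^r$, of regularity, and of the decomposition classes.
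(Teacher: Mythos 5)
Your proof is correct and follows essentially the same route as the paper's: both transport the closure relation~$\Red\sbII\subset\overline{\Red\sbI}$ through the incidence diagram, using the openness of~$\tau$ and the continuity of~$\pi$ to compare the corresponding decomposition classes. The only difference is organizational---you establish the regularity of~$\Red\sbI$ up front from the openness and $K$-stability of~$\Red_r$ and then work inside~$\Incidence^r$, whereas the paper deduces regularity at the end by picking an irreducible component of the projected orbit and invoking its dominant decomposition class; your version cleanly sidesteps that component-picking step.
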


\begin{proof}
Let~$\Gp\sbI$ be the projection~$\tau(\inv\pi(\Red\sbI))$
of~$\Red\sbI$ through the incidence diagram
of~$\Red$~(\ref{ss:sp:incidence}), and~$\Gp\sbII$ 
the projection 
of~$\Red\sbII$. Note that~$\Red\sbII$ being regular,~$\Gp\sbII$ is a
decomposition class and is thus irreducible~(\ref{co:sp:dclc}).

The morphism~$\pi$ is continuous, so that~$\inv\pi(\Red\sbI)$
contains~$\inv\pi(\Red\sbII)$ in its closure; the
morphism~$\tau$ is open, so 
that~$\Gp\sbI = \tau(\inv\pi(\Red\sbI))$ contains~$\Gp\sbII$ in its
closure. Let~$\Gq$ be an irreducible component in~$\Gp\sbI$ degenerating
onto~$\Gp\sbII$. The decomposition class~$\Decomposition_\Gq$ dominant
in~$\Gq$ is more general than~$\Gp\sbII$, it is thus regular and~$\Red\sbI$ is
the regular~$K$-orbit~$\Centralizer(\Gq)$ in~$\Red$.
\end{proof}

\begin{corollary}
\label{co:sp:dcgen}
Two regular orbits compare in the same way than their corresponding regular
decomposition classes~do.
\end{corollary}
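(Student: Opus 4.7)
The plan is to prove the equivalence by combining Proposition \ref{pr:sp:dcgen}, which already gives one implication, with a short continuity argument for the converse.

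Let $\Orbit\sbI$ and $\Orbit\sbII$ be two regular orbits in $\Red$ and denote by $\Decomposition\sbI$ and $\Decomposition\sbII$ their associated decomposition classes in $\Gp$, i.e.\ the preimages of $\Orbit\sbI$ and $\Orbit\sbII$ under $\Centralizer$, restricted to $\Regular$. Proposition \ref{pr:sp:dcgen} already yields one implication: if $\Orbit\sbI$ is more general than $\Orbit\sbII$, then $\Decomposition\sbI$ is more general than $\Decomposition\sbII$. So the only thing to show is the converse implication, namely that if $\Decomposition\sbI$ is more general than $\Decomposition\sbII$ then $\Orbit\sbI$ is more general than $\Orbit\sbII$.

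For this, I would use the continuity of the centralizer map on its natural domain. Assume $\Decomposition\sbII \subset \overline{\Decomposition\sbI}$, where the closure is taken in $\Gp$. Both decomposition classes lie in $\Regular$, because being regular is preserved within a decomposition class; hence $\Decomposition\sbII$ is contained in the open subset $\overline{\Decomposition\sbI} \cap \Regular$ of $\overline{\Decomposition\sbI}$. On this set the morphism $\Centralizer$ is defined and continuous, and any continuous map satisfies $f(\overline{A} \cap \mathrm{dom}(f)) \subset \overline{f(A)}$. Applying this to $A = \Decomposition\sbI$ and restricting to $\Decomposition\sbII$ gives
\[
  \Orbit\sbII = \Centralizer(\Decomposition\sbII) \subset \overline{\Centralizer(\Decomposition\sbI)} = \overline{\Orbit\sbI},
\]
i.e.\ $\Orbit\sbI$ is more general than $\Orbit\sbII$, as claimed.

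There is essentially no obstacle: the only thing to check carefully is that we remain inside the domain of $\Centralizer$, which is $\Regular$, and this is automatic because $\Decomposition\sbII$ is a regular decomposition class and therefore consists entirely of regular elements. The substance of the result has been absorbed into Proposition \ref{pr:sp:dcgen} and into the identification of $\Red_r$ with the image of $\Centralizer$ from Theorem \ref{th:sp:redregular}; the corollary is simply the record that the bijection between regular $K$-orbits of $\Red$ and regular decomposition classes of $\Gp$ established by $\Centralizer$ preserves the genericity order in both directions.
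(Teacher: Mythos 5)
Your proof is correct and follows essentially the same route as the paper: one direction is Proposition~\ref{pr:sp:dcgen}, and the converse is the continuity of~$\Centralizer$ on~$\Regular$ together with the identification of regular orbits with their decomposition classes coming from the equivalence of~(1) and~(2) in~\ref{pr:sp:dcdcent}. You merely spell out the elementary topological step $f(\overline{A}\cap\mathrm{dom}(f))\subset\overline{f(A)}$ that the paper leaves implicit.
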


\begin{proof}
One implication is the previous proposition, the converse follows from the
equivalence of~(1) and~(2) (\ref{pr:sp:dcdcent}) and the continuity of the
centralizer map~$\Centralizer$.  
\end{proof}

To put it another way, the centralizer map~$\Centralizer$ induces an
increasing isomorphism between the ordered set of regular decomposition
classes in~$\Gp$ and the ordered set of regular orbits in the variety of
reductions for a symmetric pair.

\subsubsection{Irregular locus}
The irregular locus~$\Irregular$ is the
complement~$\Gp\setminus\Regular$ of the set of regular elements. 
It was proven by~\VELDKAMP~\cite{VELDKAMP} for the symmetric pair associated
to the Cartesian square
of a group that the irregular locus~$\Irregular$ is the set of
points where the reductive quotient~$\phi:\Gp\to \Gp\goodquo K$ fails to be
submersive. However it remains unknown wether these equations span the ideal
of~$\Irregular$ or not. We noticed that these equations are geometrically
realized by the centralizer map~(\ref{pr:sp:irrcentralizer} below). This could be
helpful to determine if the ideal they span is reduced or not, or if the
caracterization of the irregular locus given by~\VELDKAMP\ extends to the
symmetric setting.

We define the Jacobian morphism~$J\phi$ associated
to the reductive quotient~$\phi:\Gp\to\Gp\goodquo K$ by
\[
\fun J\phi\: \Gp \times \Alt^r \Gp \to \C\\
(x,\xi)  \mapsto \Alt^r T_x \phi (\xi)
.
\\
\]
It is a homogeneous morphism, thus~$J\phi \in \Sym^N\Gp^* \otimes
\Alt^r\Gp^*$ for some~$N$. The restriction to~$\Gp$ of a non degenerate
bilinear form on~$\Gg$ that is invariant under~$G$ and~$\theta$ is
definite and~$K$-invariant. Hence we can see~$J\phi$ as
a~$K$-invariant rational map
\[
J\phi: \P(\Gp)\dashrightarrow \P(\Alt^r\Gp)
\]
by identifying~$\Gp$ with its dual.

\begin{proposition}
\label{pr:sp:irrcentralizer}
The Jacobian morphism~$J\phi$ coincides with
the centralizer map~$\Centralizer$.
\end{proposition}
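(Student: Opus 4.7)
The plan is to unravel the definition of $J\phi$ in a basis of invariants, show that the result is exactly the decomposable $r$-multivector attached to $\pcentralizer(x)$ by the Plücker embedding, and then invoke density of the regular locus to conclude the equality of rational maps.

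First, by the Chevalley-type theorem of \KR, the algebra $\C[\Gp]^K$ is freely generated by $r$ homogeneous polynomials $f_1,\dots,f_r$, whose joint map is (up to a choice of coordinates on the quotient) the reductive quotient $\phi\colon \Gp\to \Gp\goodquo K$. Writing $\phi = (f_1,\dots,f_r)$ one gets, by the very definition of the exterior power of a linear map,
\[
J\phi(x)(v_1\wedge\cdots\wedge v_r) = \det\bigl(df_i(x)(v_j)\bigr)_{1\le i,j\le r},
\]
so that, as an element of $(\Alt^r\Gp)^* = \Alt^r\Gp^*$, we have the decomposable multivector $J\phi(x) = df_1(x)\wedge\cdots\wedge df_r(x)$. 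Transporting this along the identification $\Gp^*\simeq\Gp$ supplied by the chosen invariant bilinear form $\kappa$, I will henceforth regard $df_i(x)$ as an element of $\Gp$ itself.

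Next I would prove the key infinitesimal fact that $df_i(x) \in \pcentralizer(x)$ for every $x\in\Gp$ and every $i$. The $K$-invariance $f_i(e^{tX}\cdot x) = f_i(x)$ for $X\in\Gk$, differentiated at $t=0$, yields $\kappa(df_i(x),[X,x])=0$ for all $X\in\Gk$; the $G$-invariance of $\kappa$ rewrites this as $\kappa([df_i(x),x],X)=0$ for all $X\in\Gk$. Since $[df_i(x),x]\in [\Gp,\Gp]\subset\Gk$ and $\kappa$ is non-degenerate on $\Gk$, we conclude $[df_i(x),x]=0$, that is $df_i(x)\in\pcentralizer(x)$.

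Now let $x\in\Regular$. Then $\pcentralizer(x)=\Centralizer(x)$ has dimension exactly $r$, and by \KR\ the quotient map $\phi$ is smooth at $x$, so $T_x\phi$ has rank $r$. Thus the vectors $df_1(x),\dots,df_r(x)$ are linearly independent, and by the previous paragraph they all lie in the $r$-dimensional space $\Centralizer(x)$; hence they form a basis of it. Plugging into the first paragraph we get
\[
[J\phi(x)] = [df_1(x)\wedge\cdots\wedge df_r(x)] = [\Centralizer(x)]
\]
in $\P(\Alt^r\Gp)$, via the Plücker embedding of $\GrassGp$. Since $\Regular$ is dense in $\Gp$ and both $J\phi$ and $\Centralizer$ are rational maps, they coincide everywhere they are jointly defined.

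The only step that requires external input is the rank computation for $T_x\phi$ at regular points, which is part of the Kostant-Rallis theory; the rest is essentially bookkeeping about the Plücker embedding and the identification $\Gp\simeq\Gp^*$. The main pitfall to watch is sign/convention consistency between the two identifications $(\Alt^r\Gp)^*\simeq\Alt^r\Gp^*$ and $\Gp^*\simeq\Gp$, which affects $J\phi$ only up to a nonzero scalar and is therefore harmless in $\P(\Alt^r\Gp)$.
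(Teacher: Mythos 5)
Your proposal is correct and takes essentially the same route as the paper: both arguments rest on exactly the two \KR\ inputs (differentials of $K$-invariant functions annihilate the orbit directions $[\Gk,x]$, and submersivity of $\phi$ at regular points) together with the identification $\Gp\simeq\Gp^*$ by the invariant form. The only difference is bookkeeping: you factor $J\phi(x)$ as $df_1(x)\wedge\cdots\wedge df_r(x)$ and place each gradient inside $\pcentralizer(x)$, whereas the paper works with the dual linear form $J\phi(x,\cdot)$ and shows it vanishes on every decomposable multivector divisible by an element of $[\Gk,x]$, which pins it down to the line corresponding to $\Centralizer(x)$.
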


\begin{proof}
Let~$\Ga$ be a \CARTAN\ subspace of~$\Gp$, $a\in\Ga^r$ a basis
of~$\Ga$ and~$x$ a regular element of~$\Ga$. We write~$\Gg'$ the
centralizer of~$\Ga$ in~$\Gg$ and~$\Gk' = \Gg' \cap \Gk$, $\Gp' = \Gg'
\cap \Gp'$, so that~$\ad x$ is a linear automorphism of~$\Gg'$
exchanging~$\Gk'$ and~$\Gp'$.

Now let~$\xi = \xi_1 \wedge\cdots\wedge\xi_r$ be a decomposed
$r$-vector divisible by a vector~$\xi_1$ belonging to~$\Gp'$. If~$f$
is a~$K$-invariant function and~$\epsilon\to0$, we have
\[
f(\exp(\epsilon \inv{(\ad x)}\xi_1) x ) = f(x) =
f(x) - \epsilon T_x f(\xi_1) + o(\epsilon)
.
\]
Hence~$T_x f(\xi_1) = 0$ and~$J\phi(x,\xi) = 0$ when~$\xi$ is divisible
by an element of~$\Gp'$. Now the reductive quotient is submersive at
the regular element~$x$~\cite[Theorem~13]{KR} so that~$J\phi(x,a_1 \wedge \cdots
\wedge a_r) \neq 0$. This shows that~$J\phi$ and~$\Centralizer$ agree
on~$\Regular$.
\end{proof}





\section{Rigidity of anisotropic tori}
\label{se:sp:ar}

\subsubsection{Review of stability}
Concepts familiar to the geometric invariant theory show up in the
study of degeneracies of semi-simple elements. Let us recall the
appropriate definitions and facts.
%
%
\begin{definition}
Let~$K$ be a reductive group, $V$ a finite dimensional representation
of~$V$ and~$X \subset \P(V)$ a quasi-projective variety stable under
the operation of~$K$. A point~$[v]$ of~$X$ is called
\begin{itemize}
\item
\emph{semi-stable} if there exists a $K$-invariant non-constant
homogeneous polynomial that does not vanish at~$v$;
\item 
\emph{poly-stable} if it is semi-stable and the~$K$-orbit of~$[v]$ is
closed in the set of semi-stable elements of~$X$;
\item 
\emph{stable} if it is poly-stable and has finite stabilizer in~$K$;
\item
\emph{unstable} if it is not semi-stable.
\end{itemize}
\end{definition}
When~$X$ is closed subvariety of~$\P(V)$, its semi-stable, poly-stable,
stable or unstable points are the points of~$\P(V)$ of the same kind,
belonging to~$X$. We say a non-zero vector~$v\in V$ is semi-stable,
poly-stable, stable or unstable, according to the nature of~$[v]$. The
following proposition is a basic result in geometric
invariant theory:

\begin{proposition}
Let~$X$ be a closed subvariety of~$\P(V)$ stable under the operation
of~$K$. Then:
\begin{enumerate}
\item 
The point~$[v]$ is semi-stable if, and only if, $0$ does not
belong to the closure of the~$K$-orbit of~$v$ in~$V$.
\item
The point~$[v]$ is poly-stable if, and only if, the $K$-orbit of~$v$
is closed in~$V$.
\end{enumerate}
\end{proposition}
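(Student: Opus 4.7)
The plan is to prove (1) by invoking the separation of disjoint closed $K$-invariant subsets of $V$ by $K$-invariant polynomials --- a consequence of Nagata's theorem on finite generation of invariants for a reductive group --- and to deduce (2) from (1) together with the fact that each fibre of the affine quotient $V \to V \goodquo K$ contains a unique closed $K$-orbit.

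For (1), the easy direction is that a non-constant homogeneous $K$-invariant $f$ with $f(v) \neq 0$ is constant equal to $f(v)$ on $\overline{K \cdot v}$ by continuity and $K$-invariance, while $f(0) = 0$ because $f$ is homogeneous of positive degree, so $0 \notin \overline{K \cdot v}$. For the converse, I would start with an invariant $f \in \C[V]^K$ separating the disjoint closed sets $\{0\}$ and $\overline{K \cdot v}$, decompose $f = \sum_{d \ge 0} f_d$ into its homogeneous components --- each of which is again $K$-invariant because the linear $K$-action commutes with the grading of $\C[V]$ --- and observe that $f_0 = f(0) = 0$, so at least one $f_d$ of positive degree does not vanish at $v$.

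For (2) I would first take $[v]$ poly-stable and argue by contradiction. If $K \cdot v$ were not closed in $V$, its closure $\overline{K \cdot v}$ --- which avoids the origin by (1) --- would contain a unique closed $K$-orbit $Z$ strictly smaller than $K \cdot v$. Every non-zero point of $\overline{K \cdot v}$ is semi-stable by (1), so $Z$ sits inside $V \setminus \{0\}$ and its image in $\P(V)$ lies in $\overline{K \cdot [v]}$ inside the semi-stable locus of $\P(V)$. Poly-stability of $[v]$ then forces $z = \lambda g \cdot v$ for some $z \in Z$, $\lambda \in \C^*$ and $g \in K$, whence $Z = K \cdot z = \lambda K \cdot v$; since the homothety $w \mapsto \lambda^{-1} w$ is a homeomorphism of $V$, the closedness of $Z$ propagates to $K \cdot v$, contradicting the assumption.

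For the converse I would assume $K \cdot v$ closed in $V$, note that $[v]$ is then semi-stable by (1), and for any $[v']$ in the closure of $K \cdot [v]$ in the semi-stable locus of $\P(V)$ I would use the valuative criterion --- equivalently the Hilbert--Mumford criterion --- to lift the projective degeneration to a morphism from a smooth curve into $V$ that meets $\C^* \cdot K \cdot v$ on a dense open subset and specializes to a non-zero scalar multiple $\mu v'$ at the special point. Semi-stability of $[v']$ guarantees that this lift avoids the origin, and closedness of $K \cdot v$ in $V$ then forces $\mu v'$ to belong to $\C^* \cdot K \cdot v$, so that $[v'] \in K \cdot [v]$. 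The hard part will be this valuative lift avoiding the origin; in practice I would invoke the Hilbert--Mumford criterion to reduce the whole closure question to one-parameter subgroups of $K$, along which the trivialization of the tautological line becomes explicit.
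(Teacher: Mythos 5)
The paper offers no proof of this proposition: it is stated as ``a basic result in geometric invariant theory'' and used as a black box, so there is no internal argument to compare yours against. Judged on its own terms, your proof of (1) is correct and is the standard one: separate the disjoint closed $K$-stable sets $\{0\}$ and $\overline{K\cdot v}$ by an invariant function, then pass to homogeneous components, each of which is again invariant because $K$ acts linearly. The forward direction of (2) is also correct: the closed orbit $Z$ in the boundary of $K\cdot v$ avoids the origin by (1), consists of semi-stable points of the affine cone over $X$, projects into the closure of $K\cdot[v]$ inside the semi-stable locus of $X$, and poly-stability then forces $Z=\lambda\, K\cdot v$, contradicting either closedness or the strict drop in dimension.

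In the converse of (2) you have located the difficulty in the wrong place, and the step that actually carries the weight is asserted rather than proved. Once you extend the curve into $\overline{K\cdot[v]}$ and trivialize the tautological line bundle over the local ring at the special point, the lift $\tilde\gamma$ lands in $V\setminus\{0\}$ automatically and specializes to some $\mu v'$ with $\mu\neq 0$; semi-stability plays no role there. The real issue is your claim that ``closedness of $K\cdot v$ in $V$ then forces $\mu v'$ to belong to $\C^{*}\cdot K\cdot v$'': the cone $\C^{*}\cdot K\cdot v$ need not be closed even when $K\cdot v$ is, so $\mu v'\in\overline{\C^{*}\cdot K\cdot v}$ does not by itself give the conclusion. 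What semi-stability of $[v']$ actually buys is a homogeneous invariant $h$ of positive degree $e$ with $h(v')\neq 0$; since $h(\lambda gv)=\lambda^{e}h(v)$ on the cone and $v'$ lies in its closure, also $h(v)\neq 0$, and $h(\tilde\gamma(c))\to \mu^{e}h(v')\neq 0$ shows that in any expression $\tilde\gamma(c)=\lambda(c)w(c)$ with $w(c)\in K\cdot v$ the scalars $\lambda(c)$ stay bounded away from $0$ and $\infty$. Extracting a convergent subsequence, $w(c)$ converges to a point of the closed set $K\cdot v$, whence $\mu v'\in\C^{*}\cdot K\cdot v$ and $[v']\in K\cdot[v]$. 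The appeal to the Hilbert--Mumford criterion is unnecessary and would itself require justification; the invariant $h$ does all the work. With that step made explicit your proof is complete.
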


According to~\cite{KR} these
propositions can be rephrased the following way in the setting of reductive
symmetric pairs:
\begin{itemize}
\item
the set of unstable elements in~$\Gp$ is precisely the set~$\Nilpotent$ of
nilpotent elements, with~$0$ removed;
\item
the set of poly-stable elements is the set~$\Semisimple$ of
semi-simple elements, with~$0$ removed;
\item
the set of semi-stable but not poly-stable elements is precisely the
set of elements with non-zero semi-simple part and non-zero nilpotent
part;
\item
the set of stable elements is not empty if, and only if, the rank of the
symmetric pair~$(G,\theta)$ equals the rank of reductive group~$G$; in this
case the set of stable elements is the set of anisotropic semi-simple
elements whose centralizer in~$\Gp$ is a \CARTAN\ subspace of~$\Gg$.
\end{itemize}

The following property of anisotropic algebras is remarkable with respect to
stability theory: for any anisotropic algebra~$\Ga$, the set~$\GaS$ of
poly-stable elements of~$\Ga$ (with zero added), and the set~$\GaN$ of
unstable elements of~$\Ga$ (\emph{idem}) are vector subspaces of~$\Ga$.

\subsubsection{Degeneracies of anisotropic algebras}
\label{ss:sp:dgcompl}
We introduce a language suited to our study of degeneracies.
Let~$K$ be a group acting on a complete variety~$X$. We consider a
point~$x_1 \in X$ and a \emph{degeneration}~$x_0$ of~$x_1$, that is, a point
belonging to the closure in~$X$ of the orbit~$K x_1$. There exists
an irreducible smooth curve~$C^o$ in~$K$ such that~$x_0$ belongs to the closure
of~$C^o x_1$, and we only consider degenerations along such curves.
In the study of the degeneration along the curve~$C^o$,
it is convenient to introduce the points at infinity of~$C^o$.

Let~$C$ be the smooth completion of~$C^o$. The points in~$C\setminus
C^o$ are called points at infinity. Since~$X$ is complete, the rational
map sending~$c\in C^o$ to~$c x_1$ extends to a regular map on~$C$ and,
by a slight abuse of notation, we write~$c x_1$ for the image of~$c$
under this map even for~$c$ lying in~$C\setminus C^0$. Now let~$c_0$ be a point
at infinity whose image is~$x_0$. We say that~$C^o$ is an \emph{arc in~$K$
pushing~$x_1$ toward~$x_0$}, and~$c_0$ is a \emph{point at infinity
taking~$x_1$ to~$x_0$}. 

\medbreak
The way semi-simple elements of an anisotropic algebra behave when it is
deformed by the operation of~$K$ is described by the following theorem. It
has many interesting consequences, such as~\ref{co:sp:ar},
\ref{pr:sp:srdescr} and~\ref{th:sp:dcclosure}.

\begin{theorem}
\label{th:sp:ardg}
Let~$\Ga_1$ be an~$l$-dimensional anisotropic subalgebra
and~$\Ga_0\in\Grass(l,\Gp)$ lying in the closure of the~$K$-orbit through~$\Ga_1$. 
Let~$C^o$ be an arc in~$K$ pushing~$\Ga_1$
toward~$\Ga_0$ and~$c_0$ a point at infinity taking~$\Ga_1$
to~$\Ga_0$.

Then, unless~$\Ga_0$ only contains nilpotent elements, there exists a
semi-stable element~$s_0$ in~$\Ga_0$ and a semi-stable element~$s_1$
in~$\Ga_1$ pushed toward~$s_0$ by~$C^o$ and brought to~$s_0$ by~$c_0$.

Moreover, if~$\Ga_1$ is closed under the~\CJ\ decomposition, then~$\Ga_0$ is
closed under the~\CJ\ decomposition as well, and for each semi-simple
element~$s_0$ in~$\Ga_0$ there exists a semi-simple element~$s_1$ in~$\Ga_1$
that is brought to~$s_0$ by~$c_0$.
\end{theorem}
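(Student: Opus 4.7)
My plan is to reduce the analysis of the arc $C^o$ to a one-parameter subgroup and then exploit the resulting weight decomposition of $\Gp$. By standard valuative / Hilbert--Mumford machinery, after finite base change of $C$ and conjugation by bounded factors in $K$, I may replace the arc $c \to c_0$ by the limit $t \to 0$ along a one-parameter subgroup $\lambda \colon \C^* \to K$ without loss of generality. This yields a weight decomposition $\Gp = \bigoplus_{n\in\Z} \Gp_n$, and the standard \BB\ description identifies $\Ga_0$ with the associated graded $\bigoplus_n F^n \Ga_1 / F^{n+1} \Ga_1$ of the weight filtration $F^n \Ga_1 = \Ga_1 \cap \Gp_{\geq n}$, embedded in $\Gp$ via the splitting. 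In particular $\Ga_0$ is $\lambda$-homogeneous, $\Ga_0 = \bigoplus_n (\Ga_0)_n$ with $(\Ga_0)_n \subset \Gp_n$, and remains an abelian subalgebra as a limit of such.

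The structural heart of the argument is the following. Since $\Ga_0$ is abelian and $\lambda$-homogeneous, every element of $(\Ga_0)_n$ for $n \neq 0$ is $\ad$-nilpotent on $\Gg$: indeed, $\ad v$ for $v \in \Gp_n$ shifts the $\lambda$-weight by $n$, so some iterate annihilates the finite-dimensional $\Gg$. Combined with the commutativity of $\Ga_0$, the sum of non-zero weight components of $v = \sum_n v_n \in \Ga_0$ is a commuting sum of nilpotents, hence nilpotent; so the \CJ\ decomposition of $v$ has the same semi-simple part as the weight-zero component $v_0 \in \Ga_0 \cap \Gp_0$. Consequently $v$ is nilpotent if and only if $v_0$ is, and the hypothesis that $\Ga_0$ contains a non-nilpotent element forces $\Ga_0 \cap \Gp_0$ to contain a non-nilpotent element as well.

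To finish the first assertion I consider $V = F^0 \Ga_1 = \Ga_1 \cap \Gp_{\geq 0}$, the subspace whose $\lambda$-trajectory converges at $t = 0$. The map $V \to \Ga_0 \cap \Gp_0$ sending $s_1 \in V$ to its weight-zero component $\lim_{t \to 0} \lambda(t) s_1$ is a linear surjection, by dimension-counting on the filtration. Continuity and $K$-invariance of the reductive quotient $\phi \colon \Gp \to \Gp \goodquo K$ give $\phi(s_1) = \phi(\lim_{t \to 0} \lambda(t) s_1)$, so $s_1 \in V$ is non-nilpotent if and only if its weight-zero component is. Choosing $s_1$ to be any pre-image of a non-nilpotent element of $\Ga_0 \cap \Gp_0$ yields a semi-stable element of $\Ga_1$ whose $\lambda$-limit $s_0 \in \Ga_0$ is semi-stable.

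For the second assertion, the \CJ-closure of $\Ga_1$ splits it as $\Ga_1 = \Semisimple(\Ga_1) \oplus \Nilpotent(\Ga_1)$ into abelian subalgebras of semi-simple (resp.\ nilpotent) elements, by the remark preceding the theorem. Applying the associated-graded construction to each summand produces $\Ga_0 = A_s \oplus A_n$, with $A_s$ consisting of semi-simple elements (because semi-simple orbits are closed) and $A_n$ of nilpotent elements (because the nilpotent cone is closed), hence $\Ga_0$ is \CJ-closed. Applied to $A_s$, the structural observation forces $A_s \subset \Gp_0$, since its non-zero weight components would be simultaneously semi-simple and $\ad$-nilpotent, hence zero. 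The map $\Semisimple(\Ga_1) \cap \Gp_{\geq 0} \to A_s$ is therefore a linear surjection, lifting each semi-simple $s_0 \in \Ga_0$ to a semi-simple $s_1 \in \Ga_1$ brought to $s_0$ by the flow. The main obstacle is the initial reduction to $\lambda$: ensuring that the bounded conjugation factors of the Hilbert--Mumford decomposition do not disturb the identification of limits and semi-stable elements is technically delicate, though standard.
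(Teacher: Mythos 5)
Your overall strategy---filter $\Ga_1$ by the order of magnitude of $c\,x$ near $c_0$, identify $\Ga_0$ with the leading terms, and use $K$-invariant functions to show that semi-stable directions have order exactly zero---is the same as the paper's, and your first-part argument (lifting a non-nilpotent element of $\Ga_0\cap\Gp_0$ through the continuity of the reductive quotient) would be sound if the opening reduction were available. But that reduction is a genuine gap, not a technicality. The Cartan--Iwasawa decomposition over the local field gives $c(\epsilon)=a(\epsilon)\lambda(\epsilon)b(\epsilon)$ with $a,b$ regular at $\epsilon=0$, and the bounded factor $b$ cannot be discarded: $\lambda(\epsilon)$ amplifies the $O(\epsilon)$ difference between $b(\epsilon)$ and $b(0)$ by negative powers of $\epsilon$, so $\lim\lambda(\epsilon)b(\epsilon)\Ga_1$ and $\lim\lambda(\epsilon)b(0)\Ga_1$ are in general different points of $\Grass(l,\Gp)$, and the former need not be $\lambda$-homogeneous. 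The Hilbert--Mumford/Kempf theorem only guarantees that some one-parameter subgroup drives $\Ga_1$ into a \emph{closed} invariant subset of the boundary, whereas the $K$-orbit of the prescribed $\Ga_0$ is typically not closed; and the statement is later invoked (Lemma~\ref{le:sp:dgstraight}, Corollary~\ref{co:sp:ar}, Theorem~\ref{th:sp:dcclosure}) for the \emph{given} arc $C^o$ and point $c_0$, so proving it for some other degeneration would not suffice. The paper works with the arc itself and replaces the weight decomposition by the elementary-divisor structure of $R_0\otimes\Ga_1$ in $R_0\otimes\Gp$ over the local ring at $c_0$ (the tempered moving frames of Theorem~\ref{th:sp:mfbasis}), which produces the normal form $cx_k=\epsilon^{n_k}(y_k+\eta_k)$ for arbitrary arcs and lets your order-zero argument run unchanged.

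Two further steps in your second part are unjustified even in the genuinely diagonal case. First, the associated graded of $\Ga_1$ is not in general the direct sum of the associated gradeds of the summands $\SemisimplePart{\Ga_1}$ and $\NilpotentPart{\Ga_1}$; compatibility of the magnitude filtration with a direct-sum decomposition is precisely the content of Proposition~\ref{pr:sp:mfdeco} and requires proof. Second, the claim that $A_s$ consists of semi-simple elements ``because semi-simple orbits are closed'' fails: closedness of $Ks$ controls the limit only when it exists in $\Gp$, i.e.\ when the magnitude order is non-negative, and a semi-simple element may well have negative order, in which case its leading term is nilpotent. For the pair $\GG\times\GG$ with $\GG=\germ{sl}_2$ and $\lambda(t)$ the conjugation by $\mathrm{diag}(t,t^{-1})$, the semi-simple vector $e+f$ gives $\lambda(t)(e+f)=t^2e+t^{-2}f$, of order $-2$ with nilpotent leading term $f$; so $A_s\subset\Gp_0$ is false in general, and with it the surjectivity of your lifting map onto $A_s$. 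The paper's proof instead allows each semi-simple basis vector to degenerate either to a semi-simple conjugate (order zero) or to a nilpotent leading term, and lifts a given semi-simple $s_0\in\Ga_0$ as a combination of the order-zero ones only.
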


\begin{proof}
We denote by~$R_0$ the local ring of~$C^o$ at~$c_0$, $\Gm_0$ its maximal ideal
and~$\epsilon$ a local parameter of~$C^o$ at~$c_0$. According
to~(\ref{th:sp:mfbasis}) there exists a basis~$x$ of~$\Ga_1$, a
basis~$y$
of~$\Ga_0$, a $r$-uple $n$~of integers and a~$r$-uple~$\eta\in(\Gm_0\otimes
\Gp)^r$ such that for all~$k$
\[
cx_k = \epsilon^{n_k}(y_k + \eta_k)
.
\]
If all of the~$y_k$ are nilpotent, then~$\Ga_0$ purely consists of nilpotent
elements. Hence, we assume that for some integer~$k$ the vector~$y_k$ is
semi-stable and let~$f$ be a $K$-invariant function, homogeneous of
degree~$N > 0$, that does not vanish at~$y_k$. We compute
\[
f(x_k) = \epsilon^{N n_k} f(y_k + \eta_k)
,
\]
and show that~$n_k = 0$. On the one hand, the function~$f(x_k)/f(y_k +
\eta_k)$ is regular at~$c_0$, which implies~$n_k \ge 0$. On the other hand,
the non zero function~$f(x_k)\epsilon^{-Nn_k}$ is regular at~$c_0$, which
implies~$n_k \le 0$. Thus~$n_k$ equals zero and~$f$ does not vanish at~$x_k$. We
conclude
that~$cx_k$ converges to~$y_k$ when~$c$ approaches~$c_0$ and~$x_k$ is
semi-stable.

We now proceed to the proof of the second part of the statement and assume
that~$\Ga_1$ is closed under the~\CJ~decomposition. According
to~(\ref{pr:sp:mfdeco}) we may assume that each of
the~$x_k$ is either semi-simple or nilpotent. We show that each of
the~$y_k$ is either semi-simple or nilpotent.

The image of~$\Nilpotent$ in~$\P(\Gp)$ is closed, hence~$x_k\in\Nilpotent$
implies~$y_k\in\Nilpotent$.
We now assume that~$x_k$ is semi-simple. If~$y_k$ is not
nilpotent, then~$n_k = 0$ as before and we have~$
x_k = y_k + \eta_k
$.
Hence~$c x_k$ converges toward~$y_k$ as~$c$
approaches~$c_0$. But~$x_k$ is semi-simple and its orbit is closed, so~$y_k$
is semi-simple and conjugated to~$x_k$. Thus each~$y_k$ is either
semi-simple or nilpotent, which implies that the
anisotropic subalgebra~$\Ga_0$  of dimension~$l$ spanned by the~$y_k$ is closed
under the~\CJ\ decomposition.

Last, if~$s_0\in\Ga_0$ is semi-simple, it is a linear combination~$
s_0 = \sum_{k\in K_s} Y_0^k y_k$ of the semi-simple~$y_k$'s, whose set of indice
is~$K_s \subset\set{1,\dots,l}$. Hence the translates~$cs_1$ of~$s_1 =
\sum_{k\in K_s} Y_0^k x_k$ converge toward~$s_0$ as~$c$ approaches~$c_0$.
\end{proof}

If a~$l$-dimensional anisotropic subalgebra~$\Ga_1$ of~$\Gg$ degenerates
on~$\Ga_0$, the semi-simple elements subsisting in~$\Ga_0$ can
be~\emph{rigidified}, that is, there exists an equivalent degeneration leaving
these semi-simple elements untouched:

\begin{corollary}[Rigidity of anisotropic tori]
\label{co:sp:ar}
Let~$\Ga_1$ be a~$l$-dimensional anisotropic
subalgebra closed under~\CJ\ decomposition,~$\Ga_0\in\Grass(l,\Gp)$ lying in
the closure of the~$K$-orbit
through~$\Ga_1$ and~$C^o$ be an arc in~$K$ pushing~$\Ga_1$
toward~$\Ga_0$ and~$c_0$ a point at infinity taking~$\Ga_1$
to~$\Ga_0$. Let~$\SemisimplePart{\Ga_0}$ be the linear subspace of~$\Ga_0$
spanned by its semi-simple elements. Then there exists a
$K$-conjugate~$\Ga'_1$ of~$\Ga_1$ containing~$\SemisimplePart{\Ga_0}$ and a
degeneracy curve in the centralizer~$C_K(\SemisimplePart{\Ga_0})$
of~$\SemisimplePart{\Ga_0}$ in~$K$ pushing~$\Ga'_1$
toward~$\Ga_0$.
\end{corollary}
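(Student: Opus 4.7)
The plan is to combine Theorem~\ref{th:sp:ardg} with a closedness-of-orbit argument from geometric invariant theory and a lifting through the quotient torsor. First I would use the theorem to identify $\SemisimplePart{\Ga_0}$ as a degeneration of the toral subalgebra $\SemisimplePart{\Ga_1}$; then the closedness of a certain $K$-orbit yields a single element $g_0 \in K$ realising this identification; finally, lifting through the smooth torsor $K \to K/C_K(\SemisimplePart{\Ga_0})$ converts the original degeneration into one living inside the desired centralizer.

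Applying the second part of Theorem~\ref{th:sp:ardg}, I obtain bases $(x_k)$ of $\Ga_1$ and $(y_k)$ of $\Ga_0$ with each $x_k$ and $y_k$ semi-simple or nilpotent and $c x_k \to y_k$ as $c \to c_0$ along $C^o$. Let $K_s$ be the set of indices with $y_k$ semi-simple; for those indices $x_k$ is semi-simple too, so $\{x_k\}_{k \in K_s}$ is a basis of $\SemisimplePart{\Ga_1}$ and $\{y_k\}_{k \in K_s}$ is a basis of $\SemisimplePart{\Ga_0}$. Set $m = |K_s|$ and consider the tuples $\xi = (x_k)_{k \in K_s}$ and $\eta = (y_k)_{k \in K_s}$ in $\Gp^m$. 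The $K$-stabilizer of $\xi$ for the diagonal action is the pointwise centralizer $C_K(\SemisimplePart{\Ga_1})$; since $\SemisimplePart{\Ga_1}$ is a commuting family of semi-simple elements of $\Gg$, the centralizer of $\SemisimplePart{\Ga_1}$ in $G$ is reductive, hence so is its $\theta$-fixed subgroup $C_K(\SemisimplePart{\Ga_1})$. Matsushima's criterion then ensures that $K \cdot \xi$ is closed in $\Gp^m$, and since $c \xi \to \eta$ as $c \to c_0$, there exists $g_0 \in K$ with $g_0 \xi = \eta$. In particular $g_0 \SemisimplePart{\Ga_1} = \SemisimplePart{\Ga_0}$, so $\Ga'_1 := g_0 \Ga_1$ contains $\SemisimplePart{\Ga_0}$.

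To produce the arc in $C_K(\SemisimplePart{\Ga_0})$, consider the translated arc $\psi(c) = c g_0^{-1}$: one has $\psi(c) \Ga'_1 = c \Ga_1 \to \Ga_0$ and $\psi(c) \SemisimplePart{\Ga_0} = c \SemisimplePart{\Ga_1} \to \SemisimplePart{\Ga_0}$ as $c \to c_0$. The projection $K \to K/C_K(\SemisimplePart{\Ga_0})$ is a smooth torsor, and the morphism from $C$ to $K \cdot \SemisimplePart{\Ga_0}$ induced by $c \mapsto \psi(c) \SemisimplePart{\Ga_0}$ extends through $c_0$ to the identity coset. After replacing $C$ by an étale neighbourhood $\tilde C$ of $c_0$ with distinguished point $\tilde c_0$, I lift this extension to a regular map $\rho : \tilde C \to K$ with $\rho(\tilde c_0) = e$. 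The map $h(\tilde c) := \rho(\tilde c)^{-1} \psi(\tilde c)$ then takes values in $C_K(\SemisimplePart{\Ga_0})$, and because $\rho(\tilde c) \to e$ one obtains $h(\tilde c) \Ga'_1 \to \Ga_0$ as $\tilde c \to \tilde c_0$; the image of $h$ (or a smooth completion thereof inside $C_K(\SemisimplePart{\Ga_0})$) is the required degeneracy arc, with $\tilde c_0$ playing the role of the new point at infinity.

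The main obstacle is the second stage: Theorem~\ref{th:sp:ardg} alone only provides, for each semi-simple element of $\Ga_0$, an individual semi-simple pre-image in $\Ga_1$, so upgrading these to a single $g_0 \in K$ that simultaneously realises the full degeneration of $\SemisimplePart{\Ga_1}$ onto $\SemisimplePart{\Ga_0}$ requires the closedness of $K \cdot \xi$ in $\Gp^m$, which in turn rests on the reductivity of the simultaneous stabilizer $C_K(\SemisimplePart{\Ga_1})$.
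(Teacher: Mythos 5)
Your proof has the same two-step skeleton as the paper's: Theorem~\ref{th:sp:ardg} produces semi-simple elements of~$\Ga_1$ converging onto a basis of~$\SemisimplePart{\Ga_0}$, and the arc is then straightened into the centralizer. The paper performs the straightening with Lemma~\ref{le:sp:dgstraight} (an incidence-variety argument inside~$K\times C$), whereas you lift through the torsor $K\to K/C_K(\SemisimplePart{\Ga_0})$ over an \'etale neighbourhood of~$c_0$; that part of your argument is sound and an acceptable substitute. Two small inaccuracies on the way: the \emph{statement} of Theorem~\ref{th:sp:ardg} only provides one semi-simple preimage per semi-simple element (the simultaneous bases $(x_k)$, $(y_k)$ come from its proof, via~\ref{th:sp:mfbasis} and~\ref{pr:sp:mfdeco}), and $\{x_k\}_{k\in K_s}$ need not be a basis of~$\SemisimplePart{\Ga_1}$, since a semi-simple $x_k$ may perfectly well have a nilpotent rescaled limit~$y_k$. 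Neither point is harmful, because you only need $g_0\Ga_1\supset\SemisimplePart{\Ga_0}$.

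The genuine gap is the appeal to Matsushima's criterion. Matsushima's criterion says that an orbit with reductive stabilizer is an \emph{affine} variety; it does not say the orbit is \emph{closed}, and the implication ``reductive stabilizer $\Rightarrow$ closed orbit'' is false, even for diagonal adjoint actions on tuples. For instance, in~$\Gsl_2$ (the anisotropic space of the Cartesian square of~$\SL_2$) take the pair $(h,e)$ with $[h,e]=2e$ and $e$ nilpotent: its simultaneous stabilizer is the centre, hence finite and reductive, yet $\Ad(\mathrm{diag}(t,t^{-1}))\cdot(h,e)=(h,t^2e)$ degenerates onto $(h,0)$, which is not in the orbit. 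So reductivity of $C_K(\SemisimplePart{\Ga_1})$ does not, by itself, produce your element~$g_0$ with $g_0\xi=\eta$. What is true, and what you actually need, is that the orbit of a \emph{commuting tuple of semi-simple} elements is closed --- a theorem of \RICHARDSON\ in the group case, which must be invoked (or proved) in the symmetric-pair setting; alternatively one can straighten one semi-simple direction at a time, using only the closedness of single semi-simple $K$-orbits from~\cite{KR} together with Lemma~\ref{le:sp:dgstraight}, and induct on the rank of the centralizing symmetric pair. You correctly identified the simultaneous conjugacy as the crux of the corollary --- the paper's own route through Lemma~\ref{le:sp:dgstraight} needs the hypothesis $y_0=hy_1$ for the whole tuple as well --- but the justification you offer for it does not hold up and must be replaced.
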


This corollary is a consequence of~\ref{th:sp:ardg} and the
following general lemma dealing with degeneracies of orbits:

\begin{lemma}[Straightening of degeneracies]
\label{le:sp:dgstraight}
Let~$K$ be a connected group and~$X$ and~$Y$ be
two~$K$-varieties. Let~$x_1$ be a point in~$X$ and~$x_0$ a point
belonging to the closure of the~$K$-orbit through~$x_1$, and let~$y_1$ a
point in~$Y$. Let~$C^o$ be a degeneracy curve in~$K$ pushing~$x_1$
toward~$x_0$ and~$c_0$ a point at infinity taking~$x_1$ to~$x_0$. If~$c_0$
takes~$y_1$ to a conjugate~$y_0 = hy_1$, then there exists a degeneracy curve
in the identity component of the stabilizer of~$y_0$ in~$K$
pushing~$hx_1$ to~$x_0$, and a point at infinity taking~$hx_1$ to~$x_0$.
\end{lemma}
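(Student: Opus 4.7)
The key observation is that the right translate $C^o h^{-1} \subset K$ already pushes $hx_1$ to $x_0$, since $(ch^{-1})(hx_1) = cx_1 \to x_0$ as $c \to c_0$. This curve is moreover ``tangent to $H := \mathrm{Stab}_K(y_0)$ at infinity'', in the sense that the induced morphism $c \mapsto (ch^{-1}) y_0 = c y_1$ extends to $C$ with value $y_0$ at~$c_0$. The task is to straighten $C^o h^{-1}$ modulo~$H$ so that it lies entirely in the identity component~$H^\circ$.

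I would do so using a local trivialization of the principal $H$-bundle $\pi \colon K \to K/H = K y_0$. Let $\phi \colon C \to K y_0$ denote the extension of $c \mapsto c y_1$ with $\phi(c_0) = y_0$; by étale-local triviality of $\pi$, I would pick a section~$s$ of $\pi$ defined on an étale neighbourhood of~$y_0$ with $s(y_0) = 1$, and pull it back along~$\phi$. This yields, after a finite étale base change $p \colon U \to C$ near~$c_0$ (with a chosen lift $\tilde c_0$ of $c_0$), a morphism $\sigma \colon U \to K$ satisfying $\sigma(u) y_0 = \phi(p(u))$ and $\sigma(\tilde c_0) = 1$. The straightened map is then
\[
g \colon U^o \to K, \qquad g(u) = \sigma(u)^{-1} \cdot p(u) \cdot h^{-1},
\]
where $U^o = p^{-1}(C^o)$ and $p(u) \in C^o \subset K$ for $u \in U^o$. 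A direct check shows $g(u) \in H$: both $\sigma(u)$ and $p(u) h^{-1}$ lift $\phi(p(u))$ under $\pi$, so their quotient lies in the fibre $H$. To force $g$ into the identity component, I would perform the same construction starting from the finite étale cover $K/H^\circ \to K/H$ together with a lift of $\phi$ across it; this puts $g$ into a single $H^\circ$-coset, hence into $H^\circ$ itself after the normalization $\sigma(\tilde c_0) = 1$.

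The convergence is immediate: as $u \to \tilde c_0$ one has $\sigma(u) \to 1$ and $p(u) \cdot x_1 \to x_0$, whence
\[
g(u) \cdot hx_1 = \sigma(u)^{-1} \cdot p(u) \cdot x_1 \longrightarrow x_0.
\]
Moreover $g$ admits no regular extension to $\tilde c_0$, for otherwise $p(u) = \sigma(u) g(u) h$ would acquire a limit in~$K$, contradicting $c_0 \in C \setminus C^o$; so the closure of $g(U^o)$ in $H^\circ$ is an irreducible curve, and its smooth projective completion carries a point at infinity, corresponding to $\tilde c_0$, that takes $hx_1$ to $x_0$ --- the required degeneracy curve.

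The main difficulty in this approach is technical rather than conceptual: one must implement the étale-local trivialization of~$\pi$ carefully and keep track of connected components in the step from~$H$ to~$H^\circ$. The geometric picture --- writing $c h^{-1}$ modulo $H$ to extract an $H^\circ$-valued curve --- is straightforward, but the algebro-geometric bookkeeping around the finite étale base changes is the delicate part.
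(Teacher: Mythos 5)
Your proposal is correct and takes essentially the same approach as the paper: the paper forms the incidence variety $Z = \{(g,c) \mid gcy_1 = y_0\}$, picks a curve $D\subset Z$ with $(1,c_0)$ in its closure, and pushes it forward by $(g,c)\mapsto gch^{-1}$ into the stabilizer of $y_0$, and your curve $u\mapsto(\sigma(u)^{-1},p(u))$ is precisely such a $D$, produced explicitly by an \'etale-local section of the orbit map rather than chosen abstractly. The key computation $g(u)\cdot hx_1=\sigma(u)^{-1}p(u)x_1\to x_0$ is identical in both, and the connected-component bookkeeping you flag as delicate is glossed over to the same extent in the paper's own proof.
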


\begin{proof}
The rational map sending~$c$ to~$cy_1$ is regular at~$c_0$ and~$c_0
y_1 $ is a conjugate~$y_0 = h y_1$ of~$y_1$. Let~$C(c_0) = C^o \cup
\set{c_0}$ and define
\[
Z = \set{ (g,c) \in K\times C(c_0) \mid gcy_1 = y_0 }
.
\]
The map~$cy_1$ is regular at~$c_0$, and its value belongs to the orbit
of~$y_1$, thus~$(1,c_0)\in Z$ belongs to the closure of the inverse
image~$W$ of~$C^o$ under the projection of~$Z$ to~$C(c_0)$. Let~$D$
be a degeneracy curve in~$W$ containing~$(1,c_0)$ in its closure. Then the
image~$(C^o)'$ of~$W$ under the map sending~$(g,c) \in Z$ to~$gc\inv
h\in K$ is contained in the centralizer of~$y_0 \in K$ and
pushes~$hx_1$ toward~$x_0$. This map extends to a regular map
at~$(1,c_0)$ whose image is a point at infinity for~$(C^o)'$
taking~$hx_1$ to~$x_0$.
\end{proof}





\section{Algebraicity of reductions}
\label{se:sp:alg}

\subsubsection{Degeneracies of subgroups}

The fact that each point in the variety of reductions is the \LIE\ algebra of
an algebraic subgroup of~$G$ is a consequence of the following general
observation:

\begin{proposition}
\label{pr:sp:algdg}
Let~$G$ be an algebraic group and~$\mathcal{H} \subset G \times B $ an
irreducible family of subvarieties of~$G$ over a basis~$B$. If the generic member
of~$\mathcal{H}$ is a subgroup of~$G$, then each member of~$\mathcal{H}$ is a
subgroup of~$G$.
\end{proposition}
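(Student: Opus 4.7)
The idea is to express subgroup-hood as a single closed condition and then spread it from the generic member to every member using the irreducibility of~$\mathcal{H}$.

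A non-empty closed subset $H\subseteq G$ is a subgroup if and only if $H\cdot H^{-1}\subseteq H$: from this closure one gets $e = hh^{-1}\in H$, then $h^{-1} = e\cdot h^{-1}\in H$, and finally $h_1 h_2 = h_1 (h_2^{-1})^{-1}\in H$. Since $\mathcal{H}\to B$ is surjective, each fibre $H_b$ is non-empty, so it suffices to show $H_b\cdot H_b^{-1}\subseteq H_b$ for every $b\in B$. To this end I would consider the morphism
\[
\mu\colon \mathcal{H}\times_B\mathcal{H}\longrightarrow G\times B,\qquad\bigl((h_1,b),(h_2,b)\bigr)\longmapsto (h_1h_2^{-1},b),
\]
and the closed subset $Z:=\mu^{-1}(\mathcal{H})\subseteq\mathcal{H}\times_B\mathcal{H}$. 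The hypothesis that the generic member $H_\eta$ is a subgroup translates exactly into $Z$ containing the generic fibre $H_\eta\times H_\eta$ of $\mathcal{H}\times_B\mathcal{H}\to B$, so the proposition reduces to the equality $Z=\mathcal{H}\times_B\mathcal{H}$.

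That equality would be proved by showing that the generic fibre $H_\eta\times H_\eta$ is dense in $\mathcal{H}\times_B\mathcal{H}$, so that the closed set $Z$ containing it is the whole fibred square. This density is automatic when $\mathcal{H}\to B$ is flat with geometrically integral generic fibre, which is the case in the applications, where the fibres of the family have constant dimension~$r$. In the general setting, I would base change to a smooth curve $C\subseteq B$ through any chosen $b_0\in B$: restricting the family to $C$ forces flatness, and the resulting closed subset of the fibred square over $C$ then absorbs the whole fibre over~$b_0$, yielding $H_{b_0}\cdot H_{b_0}^{-1}\subseteq H_{b_0}$.

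The main obstacle is precisely this density assertion: \emph{a priori}, $\mathcal{H}\times_B\mathcal{H}$ may acquire extra irreducible components over special loci in~$B$, and these are not automatically swept up by the closure of the generic fibre. Flatness rules this out cleanly; in its absence, the curve-restriction device is needed, and one must verify that the irreducible component obtained over $C$ really does recover the full fibre $H_{b_0}$ rather than a proper subvariety of it.
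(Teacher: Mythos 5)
Your strategy is the same as the paper's: encode subgroup-hood as the closed condition $\psi(H\times H)\subseteq H$ for $\psi(g_1,g_2)=g_1g_2^{-1}$ and propagate it from the generic member by a density argument. The one genuine difference is that you work with the fibred square $\mathcal{H}\times_B\mathcal{H}$, which is the correct object, whereas the paper takes the closure of $\mathcal{H}^o\times\mathcal{H}^o$ in the \emph{full} product $\mathcal{H}\times\mathcal{H}$. There density is indeed automatic from the irreducibility of~$\mathcal{H}$, but the condition being spread then pairs points of \emph{different} fibres and does not yield $H_b\cdot H_b^{-1}\subseteq H_b$; read with the fibred square instead, the paper's density claim is exactly the assertion you flag as the main obstacle. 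So your worry is not a technicality you failed to dispatch --- it is the gap in the argument.

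Moreover that gap cannot be closed at the stated level of generality: the proposition is false over a non-normal base. Take $G=\mathbb{G}_a^2$, let $B$ be the nodal cubic $v^2=u^2(u+1)$, and let $\mathcal{H}\subset G\times B$ be the closure of the family assigning to $(u,v)\in B\setminus\{0\}$ the line $vx=uy$ through the origin. Then $\mathcal{H}$ is irreducible and every member over $B\setminus\{0\}$ is a subgroup, but the member over the node is the union of the two lines $y=\pm x$, one for each branch, which is not a subgroup. This also shows that your curve-restriction device cannot recover the full special fibre in general: a curve through the node along one branch sees only one of the two lines. The statement should therefore carry a hypothesis such as flatness. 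Happily, in the only place the proposition is used (Corollary~\ref{co:sp:algred}) the base is a smooth curve $C$ and the family is flat over it; then $\mathcal{H}\times_C\mathcal{H}\to C$ is flat, every irreducible component dominates $C$, the generic fibre-square is dense in the fibred square, and your argument closes. I would state and prove the proposition under that hypothesis rather than for an arbitrary~$B$.
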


\begin{proof}
The algebraic subgroups of~$G$ are precisely the subvarieties~$H$ of~$G$
such that the restriction of the morphism~$\psi:G\times G \to G$
defined by $\psi(g_1,g_2) = g_1 g_2^{-1}$ maps~$H\times H$
in~$H$. Let
\[
B^o = \set{b \in B \mid \textrm{$\mathcal{H}_{b}$ is a subgroup of~$G$}}
\]
and~$\mathcal{H}^o$ the restriction of~$\mathcal{H}$ to~$B^o$, and assume
that~$B^o$ is dense in~$B$. Since~$\mathcal{H}$ is
irreducible,~$\mathcal{H}^o$ is dense
therein. Hence~$\inv\psi(\mathcal{H})$
contains~$\mathcal{H}\times\mathcal{H}$, the closure
of~$\mathcal{H}^o\times\mathcal{H}^o$.
\end{proof}

\begin{corollary}
\label{co:sp:algred}
Any point of the variety of reductions is the \LIE\ algebra of an
algebraic subgroup of~$G$.
\end{corollary}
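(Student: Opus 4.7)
The plan is to apply Proposition~\ref{pr:sp:algdg} to the family of algebraic tori naturally attached to the ordinary reductions.  Each Cartan subspace $\Ga\in\Red_o$ is toral, so its exponential $T_\Ga = \exp(\Ga)$ is an algebraic torus of dimension $r$ in $G$ with $\Lie(T_\Ga) = \Ga$.  I would package these tori into the locally closed irreducible subvariety
\[
\mathcal{H}_o = \{(g,\Ga)\in G\times\Red_o : g\in T_\Ga\} \subset G\times\Red_o,
\]
and take its closure $\mathcal{H}$ in $G\times\Red$.  Since $\Red_o$ is irreducible and dense in $\Red$, this produces an irreducible family of subvarieties of $G$ over the basis $\Red$ whose generic member is the torus $T_\Ga$.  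Proposition~\ref{pr:sp:algdg} then immediately forces every fiber $H_\Ga := \mathcal{H}_\Ga$ of the projection $\mathcal{H}\to\Red$ to be an algebraic subgroup of~$G$.

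It remains to identify the Lie algebra of $H_\Ga$ with $\Ga$.  The inclusion $\Ga\subseteq\Lie(H_\Ga)$ follows from a specialisation argument on one-parameter subgroups: writing $\Ga$ as the limit along a curve of Cartan subspaces $\Ga_n\in\Red_o$ and each $v\in\Ga$ as a limit $v = \lim v_n$ with $v_n\in\Ga_n$, the curves $\{\exp(tv_n)\}\subset T_{\Ga_n}$ lie in $\mathcal{H}_o$ and specialise in $\mathcal{H}$ to $\{\exp(tv)\}\subset H_\Ga$, so $v\in T_e H_\Ga = \Lie(H_\Ga)$.

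The main obstacle is the reverse inclusion $\Lie(H_\Ga)\subseteq\Ga$, since fiber dimensions can in principle jump upward in closures and enlarge the tangent space at the identity.  To handle this I would invoke Theorem~\ref{th:sp:ardg}: any $\Ga_n\in\Red_o$ is trivially closed under the \CJ\ decomposition, so the theorem forces each $\Ga\in\Red$ to be closed under \CJ\ as well, yielding a splitting $\Ga = \SemisimplePart{\Ga}\oplus\NilpotentPart{\Ga}$ in which $\SemisimplePart{\Ga}$ is toral and $\NilpotentPart{\Ga}$ is an abelian ad-nilpotent subalgebra commuting with~$\SemisimplePart{\Ga}$.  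The commuting product $\exp(\SemisimplePart{\Ga})\cdot\exp(\NilpotentPart{\Ga})\subset G$ is then an algebraic subgroup of $G$ with Lie algebra exactly~$\Ga$; this gives a direct witness of algebraicity and, combined with the inclusion obtained above, pins down $\Lie(H_\Ga) = \Ga$ in the closure construction, completing the argument.
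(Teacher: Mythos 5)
Your overall strategy---feeding a degenerating family built from the groups $\exp(\Ga)$, $\Ga\in\Red_o$, into Proposition~\ref{pr:sp:algdg}---is the same as the paper's, and your specialisation argument for the inclusion $\Ga_0\subseteq\operatorname{Lie}(H_{\Ga_0})$ is sound. The gap is in the reverse inclusion, which is where the real content of the corollary sits. Your proposed witness does not work: for a toral subalgebra $\mathfrak{s}$, the subset $\exp(\mathfrak{s})$ of $G$ is an algebraic subgroup with Lie algebra $\mathfrak{s}$ \emph{only when $\mathfrak{s}$ is already algebraic}, i.e.\ rational with respect to the cocharacter lattice of a maximal torus. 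For an ``irrational'' line such as $\C\cdot(h_1+\sqrt{2}\,h_2)$ in the Lie algebra of a two-dimensional torus, the exponential of the line is a Zariski-dense, non-closed subgroup, and the Lie algebra of its Zariski closure is strictly larger than the line. Theorem~\ref{th:sp:ardg} does give you the splitting $\Ga_0=\SemisimplePart{\Ga_0}\oplus\NilpotentPart{\Ga_0}$, but closure under the \CJ\ decomposition says nothing about the rationality of the toral part; asserting that $\exp(\SemisimplePart{\Ga_0})\cdot\exp(\NilpotentPart{\Ga_0})$ is an algebraic subgroup with Lie algebra exactly $\Ga_0$ is therefore assuming precisely the nontrivial point to be proved. (The containment of $\SemisimplePart{\Ga_0}$ in a Cartan subspace supplied by~\ref{co:sp:ar} does not help either: an arbitrary linear subspace of a Cartan subalgebra need not be algebraic.)

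The paper closes this gap by a dimension count rather than by exhibiting a witness. It restricts the family to a single degeneracy curve $C^o\subset K$ pushing a Cartan subspace $\Ga_1$ onto $\Ga_0$, takes the closure of $\set{(g,[x],c)\mid g\in cA_1,\ x\in c\Ga_1}$ over the smooth completion $C$, applies Proposition~\ref{pr:sp:algdg} to see that the special fibre meets $G$ in a subgroup $A_0$ whose Lie algebra contains $\Ga_0$, and then uses that a family over a smooth curve is flat, so that $\dim A_0=\dim A_1=r$ and the inclusion is an equality. In your set-up over the whole base $\Red$, fibre dimension is only upper semi-continuous and can jump on the boundary---exactly the failure mode you identified---and the correct repair is to pass to a curve and invoke flatness, not to appeal to the \CJ\ splitting.
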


\begin{proof}
Let~$\Ga_0$ be a point of~$\Red$, $\Ga_1$ a \CARTAN\ subspace of~$\Gp$
and~$C^o$ a degeneracy curve in~$K$ pushing~$\Ga_1$ toward~$\Ga_0$. We denote
by~$A_1$ be the connected subgroup of~$G$ whose \LIE\ algebra is~$\Ga_1$,
by~$\bar G$ a projective completion of~$G$ and by~$C$ a smooth completion
of~$C^o$. We consider the family~$\mathcal{V}\subset \bar G \times \P(\Gp)
\times C$ obtained by taking the closure of
\[
\mathcal{V}^o = \set{
(g,[x],c) \in G \times \P(\Gp) \times C^o
\mid
\textrm{$g \in c A_1$ and $x \in c\Ga_1$}
}
.
\]
Let~$c_0$ be a point of~$C$ taking~$\Ga_1$ to~$\Ga_0$. The
fiber~$\mathcal{V}_{c_0}$ projects on~$\bar G$ as a set meeting~$G$ along a
subgroup~$A_0$ of~$G$~(\ref{pr:sp:algdg}), and on~$\P(\Gp)$ as the
projectivization of~$\Ga_0$. Hence the \LIE\ algebra of~$A_0$
contains~$\Ga_0$. But the family~$\mathcal{V}$ over~$C$ is flat, since the
curve~$C$ is smooth~\cite[9.7]{HART}, and the dimensions of~$A_1$
and~$\Ga_1$ agree~\cite[9.5]{HART}.
\end{proof}

\subsubsection{Nilpotence of closed orbits}


Degeneracies in the varieties of reductions ultimately turn each semi-simple
dimension into a nilpotent one, except of course those belonging to the center
of~$\Gg$:

\begin{proposition}
\label{pr:sp:algclosed}
If~$G$ is semi-simple, then a point in a closed orbit of~$\Red$ is contained
in the nilpotent cone~$\Nilpotent$ of~$\Gg$.
\end{proposition}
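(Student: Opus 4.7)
The plan is to argue by contradiction using the \HILBERT-\MUMFORD\ criterion applied to the \PLUCKER\ embedding $\Red \subset \P(\Alt^r \Gp)$.

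Suppose $\Ga_0$ is in a closed $K$-orbit of~$\Red$ and, for contradiction, that $\Ga_0 \not\subset \Nilpotent$. Since $\Red$ is the closure of the $K$-orbit of any \CARTAN\ subspace~$\Ga_1$, the second half of~\ref{th:sp:ardg} applies and shows that $\Ga_0$ is closed under the \CJ\ decomposition; in particular its semi-simple part $\SemisimplePart{\Ga_0}$ contains some non-zero element~$s$. As~$G$ is semi-simple, $Z(\Gg) = 0$, so $s$ is non-central.

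Lift $\Ga_0$ to a non-zero $\omega_0 \in \Alt^r \Gp$. Closedness of the projective orbit $K\cdot[\omega_0]$ in $\Red$ leaves two GIT alternatives for the affine orbit $K\omega_0 \subset \Alt^r \Gp$: either it is affinely closed, or $0$ lies in its closure. In the unstable case, \HILBERT-\MUMFORD\ supplies a one-parameter subgroup $\lambda(t) = \exp((\log t)\,h)$ with $h \in \Gk$ under which every weight of $\omega_0$ is strictly positive; equivalently, $\Ga_0$ lies in the positive-weight subspace $\bigoplus_{\alpha > 0} \Gp(h)_\alpha$ of~$\Gp$ for $\ad h$. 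Any $v$ in this subspace satisfies $\lim_{t \to 0}\lambda(t)v = 0$, so $v$ is unstable for the $K$-action on~$\Gp$ and therefore nilpotent by~\KR~\cite{KR}. This forces $\Ga_0 \subset \Nilpotent$, contradicting the existence of~$s$.

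In the poly-stable case, the stabilizer $K_{\omega_0}$ is reductive by a theorem of Luna, and $K_{\Ga_0}/K_{\omega_0}$ embeds into~$\mathbb{G}_m$ as a character acting on the line $\C\omega_0$. This embedding must be trivial, otherwise $\mathbb{G}_m\,\omega_0 \subset K\omega_0$ would accumulate to~$0$ and contradict the affine closedness of~$K\omega_0$. Hence $K_{\omega_0} = K_{\Ga_0}$, which is both reductive and parabolic (the latter because $K\cdot\Ga_0 \cong K/K_{\Ga_0}$ is a projective homogeneous space), so $K_{\Ga_0} = K$ and $\Ga_0$ is $K$-stable. Then $[\Gk, \Ga_0] \subset \Ga_0 \subset \Gp$ gives $\ad(s)(\Gk) \subset \Ga_0$, while $\ad(s)(\Gp) \subset \Gk$ is automatic and $\ad(s)(\Ga_0) \subset [\Ga_0, \Ga_0] = 0$ because $\Ga_0$ is abelian. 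Composing these three inclusions yields $\ad(s)^3(\Gp) \subset \ad(s)^2(\Gk) \subset \ad(s)(\Ga_0) = 0$, so the semi-simple endomorphism $\ad s$ vanishes on~$\Gp$; a similar iteration shows $\ad s$ vanishes on~$\Gk$. Thus $s \in Z(\Gg) = 0$, a contradiction.

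The main obstacle is the poly-stable branch: while the unstable case collapses immediately via a single \HILBERT-\MUMFORD\ destabilization, the poly-stable case requires a delicate combination of Luna-type reductivity, parabolicity extracted from projectivity, and the cubic iteration of~$\ad s$ that crucially exploits both the abelianness of~$\Ga_0$ and the semi-simplicity of~$\Gg$ to reduce~$s$ into the centre.
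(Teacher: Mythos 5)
Your poly-stable branch is essentially sound: once you know $K_{\Ga_0}$ is both reductive and parabolic you are running the same mechanism as the paper (``reductive and parabolic, hence all of~$K$''), and your cubic iteration of~$\ad(s)$ is a correct way to push~$s$ into the centre. The one slip there is minor: the character $K_{\Ga_0}\to\mathbb{G}_m$ acting on the line~$\C\omega_0$ need not be trivial --- its image could be a finite subgroup of~$\mathbb{G}_m$ without contradicting affine closedness --- but then $K_{\omega_0}$ has finite index in~$K_{\Ga_0}$, which is still enough to make $K_{\Ga_0}$ reductive.

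The unstable branch, however, contains a genuine gap. The \HILBERT-\MUMFORD\ criterion produces a one-parameter subgroup~$\lambda$ of~$K$ for which every weight of~$\omega_0\in\Alt^r\Gp$ is strictly positive, but your ``equivalently, $\Ga_0$ lies in the positive-weight subspace of~$\Gp$'' is false for $r\ge 2$. Choosing a basis of~$\Ga_0$ adapted to the $\lambda$-filtration of~$\Gp$, with leading weights $m_1,\dots,m_r$, the destabilization condition reads $\sum_i m_i>0$, which is perfectly compatible with some~$m_i$ being negative, i.e.\ with~$\Ga_0$ containing vectors that are \emph{not} sent to~$0$ by~$\lambda(t)$ as $t\to 0$. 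So you cannot conclude $\Ga_0\subset\Nilpotent$ in that case, the contradiction with $s\neq 0$ evaporates, and nothing in your setup lets you exclude the unstable case a priori. The paper sidesteps the whole GIT dichotomy: it applies ``reductive and parabolic implies all of~$K$'' not to the stabilizer of the point~$\Ga_0$ but to the stabilizer of the subspace~$\SemisimplePart{\Ga_0}$, which contains the parabolic stabilizer of the point and is \emph{automatically} reductive, being the normalizer of a toral subalgebra; no semi-stability of~$\omega_0$ is needed. Making that substitution repairs your proof, and your own $\ad(s)$ computation then finishes it.
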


\begin{proof}
Let~$\Gu$ be a reduction belonging to a closed~$K$-orbit in~$\Red$, and~$P$ its
stabilizer in~$K$. The \LIE\ algebra~$\Gu$ is
algebraic~(\ref{co:sp:algred}) and abelian, it follows from
the \CJ-decomposition that the set~$\SemisimplePart{\Gu}$ of semi-simple elements
of~$\Gu$ and the set~$\NilpotentPart{\Gu}$ of nilpotent elements of~$\Gu$ are
supplementary linear subspaces of~$\Gu$. The stabilizer in~$K$
of~$\SemisimplePart{\Gu}$ hence contains~$P$. It is thus both a reductive and
a parabolic subgroup of~$K$, so it equals~$K$. This yields~$\SemisimplePart{\Gu}
= 0$.
\end{proof}





\section{Subvarieties of reductions}
\label{se:sp:sr}

\subsubsection{Variety of reductions and derivation}
To the reductive symmetric pair~$(G,\theta)$ corresponds a symmetric
pair whose group is the derived group~$G'$ of~$G$. Since~$G'$ is
stabilized by any automorphism of~$G$, the involution~$\theta$ restricts
to~$G'$, turning~$(G',\theta)$ into a semi-simple symmetric pair.

\begin{proposition}
\label{pr:sp:srcenter}
Let~$(G,\theta)$ be a symmetric pair and~$G'$ the derived group
of~$G$. Let~$K'$ be the adjoint form of the identity component of the
fixed points group of~$(G',\theta)$. Let~$\Red$ be the variety of reductions
for~$(G,\theta)$ and~$\Red'$ the one for~$(G',\theta)$. Let~$\Gp'$ be the
space of anisotropic vectors of~$(G,\theta)$ belonging to the \LIE\ algebra
of~$G'$, and~$\Gp_Z$ those belonging to the \LIE\ algebra of the
center of~$G$. Then:
\begin{enumerate}
\item
The groups~$K$ and~$K'$ have the same orbits in~$\Red$.
\item
The maps~$p:\Red\to\Red'$ and~$j:\Red'\to\Red$ defined by
\[
  p(\Gu) = \Gu\cap\Gp'
  \qquad\textrm{and}\qquad
  j(\Gv) = \Gv\oplus\Gp_Z
\]
are inverse $K'$-invariant isomorphisms.
\end{enumerate}
\end{proposition}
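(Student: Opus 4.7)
The plan is to exhibit a canonical direct sum decomposition $\Gp = \Gp_Z \oplus \Gp'$ coming from the center/derived-subalgebra splitting of~$\Gg$, to show that every reduction respects this decomposition, and then to identify the actions of~$K$ and~$K'$.

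First I would set up the decomposition. Since~$\theta$ is a \LIE\ algebra automorphism it preserves the center~$\Gz$ of~$\Gg$ and the derived subalgebra~$[\Gg,\Gg]$, so intersecting with~$\Gp$ yields $\Gp = \Gp_Z \oplus \Gp'$. The key rigidity claim is then: every point $\Gu\in\Red$ contains~$\Gp_Z$. For an ordinary reduction~$\Ga$ this is immediate because $\Gp_Z$ is central in~$\Gg$ and, by proposition~\ref{pr:sp:spcentralizer}, the centralizer of~$\Ga$ in~$\Gp$ is~$\Ga$ itself, forcing $\Gp_Z \subset \Ga$. The condition ``contains the fixed subspace $\Gp_Z$'' is closed in the \GRASSMANN\ variety, and therefore propagates to every limit point of~$\Red_o$, that is, to all of~$\Red$.

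Second, setting $r' = r - \dim \Gp_Z$, the rigidity above makes $p(\Gu) = \Gu \cap \Gp'$ a well-defined morphism from~$\Red$ to $\Grass(r', \Gp')$, and $j(\Gv) = \Gv \oplus \Gp_Z$ a well-defined morphism in the opposite direction; they are manifestly inverse linear operations wherever they are defined. On~$\Red_o$ they restrict to a bijection $\Red_o \leftrightarrow \Red'_o$: if~$\Ga$ is a \CARTAN\ subspace of~$\Gp$, then $\Ga \cap \Gp'$ is maximal among subspaces of~$\Gp'$ contained in a \CARTAN\ subalgebra of~$\Gg'$; conversely adding~$\Gp_Z$ to a \CARTAN\ subspace of~$\Gp'$ yields one in~$\Gp$. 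Because $\Red_o$ and $\Red'_o$ are dense, taking closures transports this identification to all of~$\Red$ and~$\Red'$, yielding part~(2).

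Third, for part~(1), both actions factor through the adjoint group. The $K$-action on~$\Gp$ factors through $K / (K \cap Z(G)) \hookrightarrow \Ad(G)$, while $K'$ is by definition $\Ad\bigl(((G')^\theta)^\circ\bigr) \subset \Ad(G') = \Ad(G)$. Using the central isogeny $Z(G)^\circ \times G' \to G$ and the $\theta$-stability of both factors, the inclusion $((G')^\theta)^\circ \hookrightarrow (G^\theta)^\circ = K$ descends to an equality of images in~$\Ad(G)$. Combined with the trivial action of~$\Gp_Z$ via the adjoint representation, this gives $K\cdot \Gu = K' \cdot \Gu$ for every $\Gu\in\Red$.

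The only delicate point is this last surjectivity: one must check that the connected component $(G^\theta)^\circ$ is accounted for, modulo the center, by $((G')^\theta)^\circ$. This is a routine \LIE-theoretic bookkeeping using the $\theta$-stability of~$Z(G)^\circ$ and the fact that $Z(G)$ lies in the kernel of the adjoint representation; once granted, the rest of the proof reduces to linear algebra on the decomposition $\Gp = \Gp_Z \oplus \Gp'$ and the closure argument in the \GRASSMANN\ variety.
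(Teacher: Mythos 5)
Your proposal is correct and follows essentially the same route as the paper: the heart of both arguments is that every reduction contains~$\Gp_Z$ (immediate for ordinary reductions, then propagated to all of~$\Red$ by closedness of the incidence condition in the \GRASSMANN\ variety), which yields the splitting $\Gu = (\Gu\cap\Gp')\oplus\Gp_Z$ and hence part~(2), while part~(1) comes from the triviality of the central factor's action. The only cosmetic difference is that the paper disposes of the group-theoretic bookkeeping in part~(1) by first replacing~$G$ with an isogenous group that is the direct product of~$G'$ and its center (licit since~$\Red$ depends only on the isogeny class), whereas you track images under the central isogeny $Z(G)^\circ\times G'\to G$ inside the adjoint group; both come to the same thing.
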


\begin{proof}
To establish the first claim, we replace~$G$ with its adjoint form so
that~$G$ is the direct product of its derived group~$G'$ and its
center~$Z_G$. Now~$G^\theta = (G')^\theta \times Z_G^\theta$, and
since~$Z_G^\theta$ lies in the kernel of the map~$G^\theta\to
\Aut(\Red)$, the map~$K\to\Aut(\Red)$ factors through~$K'$.

To prove the second claim, it is sufficient to show that for
any~$\Gu\in\Red$,
\begin{equation}
\label{eq:sp:srcenter}
  \Gu = (\Gu \cap \Gp') \oplus \Gp_Z
\end{equation}
holds.
We notice that any reduction contains~$\Gp_Z$. The
infinitesimal anisotropic center~$\Gp_Z$ is pointwise fixed by~$K$,
thus it is is enough to remark that any ordinary reduction
contains~it, which follows from the maximality condition.
Now~$\Gp = \Gp' \oplus \Gp_Z$ and~(\ref{eq:sp:srcenter}) holds.
\end{proof}

We may narrow our study of varieties of reductions for reductive
symmetric pairs down to semi-simple symmetric pairs. However,
reductive symmetric pairs appear naturally as symmetric pairs
associated with centralizers of anisotropic tori.

\subsubsection{Subvarieties of reductions}

Let~$\Ga\in\Red_o$ an ordinary reduction and~$\tilde\Ga \subset\Ga$ a subspace
of~$\Ga$. The centralizer~$C_G(\tilde\Ga)$ is a~$\theta$-stable reductive
subgroup of~$G$, thus~$(C_G(\tilde\Ga),\theta)$ is a reductive symmetric
pair whose rank equals the rank of~$(G,\theta)$.
Since~$\gcentralizer(\tilde\Ga) \subset \Gg$, the
variety~$\Red(\tilde\Ga)$ of reductions for the reductive symmetric
pair~$(C_G(\tilde\Ga),\theta)$ is a natural subvariety of the variety~$\Red$
of reductions for the reductive symmetric pair~$(G,\theta)$. It is naturally
isomorphic to the variety of reductions~$(C'_G(\tilde\Ga),\theta)$ whose group
is the derived group of~$C_G(\tilde\Ga)$.

\begin{proposition}
\label{pr:sp:srdescr}
Let~$\tilde\Ga$ be a subspace of an ordinary reduction and~$\Red(\tilde\Ga)
\subset \Red$ the variety of reductions for the symmetric
pair~$(C_G(\tilde\Ga), \theta)$. Then
\[
\Red(\tilde\Ga) = \set{ \Gu\in\Red \mid \Gu \supset \tilde\Ga }
.
\]
\end{proposition}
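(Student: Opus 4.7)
The plan is to verify both inclusions after first checking $\Red(\tilde\Ga)\subset\Red$, which is implicit in the statement. For this containment, I would note that an ordinary reduction of $(C_G(\tilde\Ga),\theta)$ is a Cartan subspace of $\pcentralizer(\tilde\Ga)$ of dimension equal to the common rank $r$ of the two pairs; such a subspace is abelian in $\Gp$, has dimension $r$, and consists of semisimple elements, so it is already a Cartan subspace of $\Gp$. Since the Grassmannian of $r$-planes in $\pcentralizer(\tilde\Ga)$ embeds as a closed subvariety of $\Grass(r,\Gp)$, taking closures yields $\Red(\tilde\Ga)\subset\Red$.

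For the inclusion $\Red(\tilde\Ga)\subset\{\Gu\in\Red\mid\Gu\supset\tilde\Ga\}$, I would apply proposition~\ref{pr:sp:srcenter} to the reductive pair $(C_G(\tilde\Ga),\theta)$. Since $\tilde\Ga$ is a toral subalgebra, it is the Lie algebra of the torus it generates, and that torus is central in its own centralizer $C_G(\tilde\Ga)$; hence $\tilde\Ga$ lies in the anisotropic part of the Lie algebra of the center of $C_G(\tilde\Ga)$. Proposition~\ref{pr:sp:srcenter} then forces every reduction for $(C_G(\tilde\Ga),\theta)$ to contain that anisotropic center, and in particular $\tilde\Ga$.

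The reverse inclusion is the substantive step, and this is where the rigidity of anisotropic tori is decisive. Given $\Gu\in\Red$ containing $\tilde\Ga$, I would first invoke corollary~\ref{co:sp:algred} to obtain that $\Gu$ is algebraic, hence closed under the \CJ\ decomposition; since every element of $\tilde\Ga\subset\Gp$ is semisimple, $\tilde\Ga\subset\SemisimplePart{\Gu}$. Next, pick a Cartan subspace $\Ga_1\in\Red_o$ and an arc $C^o$ in $K$ pushing $\Ga_1$ onto $\Gu$, whose existence follows from $\Gu\in\Red=\overline{K\Ga_1}$. Corollary~\ref{co:sp:ar} then produces a $K$-conjugate $\Ga'_1$ of $\Ga_1$ that contains $\SemisimplePart{\Gu}$ together with a degeneracy curve lying in $C_K(\SemisimplePart{\Gu})\subset C_K(\tilde\Ga)$ that pushes $\Ga'_1$ toward $\Gu$. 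Because $\Ga'_1$ is a Cartan subspace of $\Gp$ contained in $\pcentralizer(\tilde\Ga)$, it is automatically a Cartan subspace of $\pcentralizer(\tilde\Ga)$, i.e.\ an ordinary reduction for $(C_G(\tilde\Ga),\theta)$; and since the degeneracy curve operates through $C_K(\tilde\Ga)$, the whole degeneration takes place inside $\Grass(r,\pcentralizer(\tilde\Ga))$, so that $\Gu$ belongs to $\Red(\tilde\Ga)$.

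The main obstacle is exactly the production of this last degeneration inside the centralizer of $\tilde\Ga$ rather than the ambient group $K$: an arbitrary degeneracy of $\Ga_1$ onto $\Gu$ need not fix $\tilde\Ga$, so one cannot hope to conclude without a rigidification procedure, and corollary~\ref{co:sp:ar} is exactly the tool that furnishes one.
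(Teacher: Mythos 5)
Your proof is correct and follows essentially the same route as the paper: the forward inclusion is the \emph{a priori} one (every reduction for the smaller pair contains its anisotropic center, hence~$\tilde\Ga$, via~\ref{pr:sp:srcenter}), and the reverse inclusion is exactly the intended application of the rigidity corollary~\ref{co:sp:ar}, conjugating a \CARTAN\ subspace to one containing~$\SemisimplePart{\Gu}\supset\tilde\Ga$ and straightening the degeneracy into~$C_K(\tilde\Ga)$. The paper's own proof is a two-line citation of~\ref{co:sp:ar}; yours supplies the details it omits.
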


\begin{proof}
The variety of reductions~$\Red(\tilde\Ga)$ is~\emph{a priori} a subset of the
right hand side. The reciprocal inclusion follows from ~(\ref{co:sp:ar}).
\end{proof}

\begin{definition}
The \emph{reductive symmetric pair centralizing~$\tilde\Ga$} is the reductive
symmetric pair~$(C_G(\tilde\Ga),\theta)$. The \emph{variety of reductions
containing~$\tilde\Ga$} is the variety of reductions~$\Red(\tilde\Ga)$
of~$(C_G(\tilde\Ga),\theta)$. The subvarieties of reductions of~$\Red$ are the
subvarieties of the form~$\Red(\tilde\Ga)$.
We write~$K(\tilde\Ga)$ the adjoint form of the identity component of the
fixed points group~$C_G(\tilde\Ga)^\theta$. Recall that this is the group whose
action on~$\Red(\tilde\Ga)$ we are interested in.
\end{definition}

Note that 
the space~$\tilde\Ga$ is tangent to the center of~$C_G(\tilde\Ga)$. Hence
the semi-simple symmetric pair associated to~$(C_G(\tilde\Ga),
\theta)$~(\ref{pr:sp:srcenter}) has smaller rank than~$(G,\theta)$. The
following propositions are immediate consequences of the rigidity
of~$\tilde\Ga$~(\ref{co:sp:ar}).

\begin{proposition}
Let~$\Red(\tilde\Ga)$ be a subvariety of reductions of~$\Red$ and~$\Orbit$ an
orbit of~$K$ in~$\Red$, whose codimension is~$k$. Then~$\Orbit \cap
\Red(\tilde\Ga)$ is an orbit
of~$C_K(\tilde\Ga)$ in~$\Red(\tilde\Ga)$ whose codimension is~$k$. In
particular~$\Orbit \cap \Red(\tilde\Ga)$ is a finite
union of orbits of~$K(\tilde\Ga)$.
\end{proposition}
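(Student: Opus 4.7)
The plan is to verify three claims about $Y := \Orbit \cap \Red(\tilde\Ga)$: (i) stability under $C_K(\tilde\Ga)$, (ii) $\dim Y = \dim \Red(\tilde\Ga) - k$, and (iii) transitivity of $C_K(\tilde\Ga)$ on $Y$. The ``in particular'' clause then follows from the finite index of $K(\tilde\Ga)$ in $C_K(\tilde\Ga)$, noting that the central torus acts trivially on $\Red$.

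Claim (i) is immediate: by proposition~\ref{pr:sp:srdescr}, $\Red(\tilde\Ga) = \set{\Gu \in \Red \mid \Gu \supset \tilde\Ga}$ is stable under $C_K(\tilde\Ga)$, which fixes $\tilde\Ga$ pointwise, while $\Orbit$ is $K$-stable.

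For (ii) fix $\Gu_0 \in Y$ with $K$-stabilizer $S$. The key observation is $S^\circ \subset C_K(\tilde\Ga)$: the group $S$ preserves the \CJ~decomposition of elements of $\Gu_0$, hence the toral subspace $\SemisimplePart{\Gu_0}$; since the normalizer of a toral subspace differs from its centralizer by a finite Weyl-like group, $S^\circ$ lies in $C_K(\SemisimplePart{\Gu_0}) \subset C_K(\tilde\Ga)$ (using $\tilde\Ga \subset \SemisimplePart{\Gu_0}$). Combined with $\dim K - \dim C_K(\tilde\Ga) = \dim \Gp - \dim \pcentralizer(\tilde\Ga)$, which follows from the weight space decompositions~\ref{eq:sp:spdecoiso} and~\ref{eq:sp:spdecoaniso} applied at a regular element of $\tilde\Ga$, and from proposition~\ref{pr:sp:reddim}, a direct count yields $\dim(C_K(\tilde\Ga) \cdot \Gu_0) = \dim \Red(\tilde\Ga) - k$.

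Claim (iii) invokes rigidity. Given $\Gu_0, \Gu_1 \in Y$, apply corollary~\ref{co:sp:ar} to the degenerations toward each $\Gu_i$ of an ordinary reduction $\Ga \supset \tilde\Ga$: this yields ordinary reductions $\Ga^{(i)} \supset \SemisimplePart{\Gu_i} \supset \tilde\Ga$ together with degeneracy curves in $C_K(\SemisimplePart{\Gu_i}) \subset C_K(\tilde\Ga)$ carrying $\Ga^{(i)}$ to $\Gu_i$. Since ordinary reductions containing $\tilde\Ga$ are Cartan subspaces of the centralizer pair $(C_G(\tilde\Ga),\theta)$ and therefore $C_K(\tilde\Ga)$-conjugate, one may replace $\Gu_0$ by a $C_K(\tilde\Ga)$-conjugate so that $\Ga^{(0)} = \Ga^{(1)}$. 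Together with (ii), which forces every $C_K(\tilde\Ga)$-orbit in $Y$ to be open of dimension $\dim Y$, the rigidity-provided $C_K(\tilde\Ga)$-degenerations from the common ordinary reduction connect $\Gu_0$ and $\Gu_1$ inside $Y$ and hence identify their orbits. The main obstacle I expect is precisely this final connectedness step---turning ``both lie in the closure of a common $C_K(\tilde\Ga)$-orbit'' into ``both lie in the same orbit''---since the dimension count alone permits $Y$ to be a finite disjoint union of open orbits; ruling this out requires careful tracking of the degeneration curves.
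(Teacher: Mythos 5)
The paper offers no argument for this proposition (it is declared an ``immediate consequence'' of~\ref{co:sp:ar}), so your attempt is necessarily going beyond the text. Your steps (i) and (ii) are correct and already yield most of what is needed: once $S^\circ\subset C_K(\tilde\Ga)$ for the $K$-stabilizer $S$ of any point of $Y=\Orbit\cap\Red(\tilde\Ga)$, every $C_K(\tilde\Ga)$-orbit in $Y$ has dimension $\dim\Orbit-(\dim\Gp-\dim\pcentralizer(\tilde\Ga))=\dim\Red(\tilde\Ga)-k$; since $Y$ is a constructible set covered by orbits of one and the same dimension, each such orbit is open and closed in $Y$, so $Y$ is a \emph{finite} disjoint union of $C_K(\tilde\Ga)$-orbits of codimension $k$. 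That gives the codimension claim and the ``in particular'' clause with no appeal to rigidity at all.

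The obstacle you flag in step (iii) is, however, not a mere technicality: the single-orbit claim is false for the pointwise centralizer $C_K(\tilde\Ga)$, so no tracking of degeneration curves can close the gap. Take the pair $(\GSL_4\times\GSL_4,\GSL_4)$, so that $\Gp\cong\Gsl_4$ with $K$ acting by the adjoint representation, and let $\tilde\Ga=\C t$ with $t=\mathrm{diag}(1,1,-1,-1)$. Put $h=\mathrm{diag}(1,-1,0,0)$, $h'=\mathrm{diag}(0,0,1,-1)$, and let $E_{12},E_{34}$ be elementary matrices. Then $\Gu_0=\C t\oplus\C h\oplus\C E_{34}$ and $\Gu_1=\C t\oplus\C h'\oplus\C E_{12}$ both lie in $\Red$ (each is the limit of the diagonal \CARTAN\ subalgebra $\germ{t}$ under $\Ad\exp(sE_{34})$, resp.\ $\Ad\exp(sE_{12})$, as $s\to\infty$) and both contain $\tilde\Ga$; the permutation matrix of $(13)(24)$ carries $\Gu_0$ to $\Gu_1$, so they lie in one $K$-orbit $\Orbit$. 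But $C_K(\tilde\Ga)=C_K(t)$ is the connected block-diagonal group $\set{\mathrm{diag}(A,B)}$, which preserves the two $2\times2$ blocks, while $\SemisimplePart{\Gu_0}$ consists of the diagonal matrices with equal third and fourth entries and $\SemisimplePart{\Gu_1}$ of those with equal first and second entries; hence no element of $C_K(t)$ maps $\Gu_0$ to $\Gu_1$, and $\Orbit\cap\Red(\tilde\Ga)$ is the union of \emph{two} $C_K(\tilde\Ga)$-orbits, interchanged by an element of $N_K(\tilde\Ga)\setminus C_K(\tilde\Ga)$ (which sends $t$ to $-t$). The moral is that rigidity controls $\SemisimplePart{\Gu}$ as a subspace of $\Gu$, but two reductions sharing $\tilde\Ga$ may position $\tilde\Ga$ inequivalently inside their semi-simple parts. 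What survives --- and is all that the subsequent propositions actually use --- is exactly the conclusion of your (i) and (ii): a finite, equidimensional union of $C_K(\tilde\Ga)$-orbits of codimension $k$; the single-orbit formulation could at best be salvaged by replacing the centralizer with the normalizer of the subspace $\tilde\Ga$.
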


\begin{proposition}
Let~$\Red(\tilde\Ga)$ be a subvariety of reductions of~$\Red$ and~$\Ga_0$ a
point of~$\Red(\tilde\Ga)$.
If~$\Orbit$ is a~$K$-orbit in~$\Red$ containing~$\Ga_0$
in its closure, then~$\Orbit$ contains an orbit of~$K(\tilde\Ga)$
in~$\Red(\tilde\Ga)$ containing~$\Ga_0$ in its closure.
\end{proposition}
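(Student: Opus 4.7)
The plan is to reduce the statement to a direct application of the rigidity of anisotropic tori~(\ref{co:sp:ar}) applied to a degeneracy witnessing that~$\Ga_0$ lies in the closure of~$\Orbit$.

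To begin, I would pick any representative~$\Ga_1 \in \Orbit$ and choose an arc~$C^o$ in~$K$ with a point at infinity~$c_0$ pushing~$\Ga_1$ toward~$\Ga_0$. To invoke~\ref{co:sp:ar}, I need to know that~$\Ga_1$ is closed under the \CJ-decomposition. This is where the algebraicity result~\ref{co:sp:algred} enters: $\Ga_1$ is the \LIE\ algebra of an algebraic subgroup of~$G$, which is abelian (as~$\Ga_1 \in \Red \subset \Ab$) and whose identity component splits as a product of a torus and a unipotent group, producing a \CJ-stable decomposition of~$\Ga_1$ into semi-simple and nilpotent directions.

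Applying the rigidity corollary then yields a $K$-conjugate~$\Ga'_1 = g\Ga_1$ containing~$\SemisimplePart{\Ga_0}$, together with a degeneracy curve contained in~$C_K(\SemisimplePart{\Ga_0})$ that pushes~$\Ga'_1$ toward~$\Ga_0$. The key observation is that~$\tilde\Ga$ is a subspace of an ordinary reduction, hence of a~\CARTAN\ subspace of~$\Gp$, so every element of~$\tilde\Ga$ is semi-simple and~$\tilde\Ga \subset \SemisimplePart{\Ga_0}$. Therefore~$\Ga'_1$ contains~$\tilde\Ga$, so~$\Ga'_1 \in \Red(\tilde\Ga)$ by~\ref{pr:sp:srdescr}, and the degeneracy curve lies in~$C_K(\tilde\Ga)$.

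To conclude, let~$\Orbit'$ be the~$K(\tilde\Ga)$-orbit through~$\Ga'_1$ in~$\Red(\tilde\Ga)$. Since the orbits of~$K(\tilde\Ga)$ coincide with the orbits of~$C_K(\tilde\Ga)$ on~$\Red(\tilde\Ga)$---applying part~(1) of~\ref{pr:sp:srcenter} to the reductive symmetric pair~$(C_G(\tilde\Ga),\theta)$---the degeneracy curve traces a path inside~$\Orbit'$ whose limit at~$c_0$ is~$\Ga_0$, so~$\Ga_0$ lies in the closure of~$\Orbit'$. On the other hand,~$\Orbit' \subset K \cdot \Ga'_1 = \Orbit$, which finishes the proof.

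The only nontrivial ingredient is the \CJ-closure of~$\Ga_1$ for a possibly special reduction, which is why~\ref{co:sp:algred} is invoked at the outset; once that is in place, the argument is a clean assembly of the rigidity corollary with the description of subvarieties of reductions provided by~\ref{pr:sp:srdescr}.
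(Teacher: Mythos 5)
Your argument is correct and is precisely the unpacking the paper intends: the paper states this proposition without proof as an ``immediate consequence'' of the rigidity corollary~\ref{co:sp:ar}, and your chain---\CJ-closure of~$\Ga_1$ via the algebraicity result~\ref{co:sp:algred}, rigidity to replace~$\Ga_1$ by a conjugate containing~$\SemisimplePart{\Ga_0}\supset\tilde\Ga$ together with a degeneracy curve centralizing it, then~\ref{pr:sp:srdescr} to conclude that the conjugate lies in~$\Red(\tilde\Ga)$---is the natural route. One cosmetic remark: \ref{pr:sp:srcenter}(1) compares~$K$ with the fixed-point group of the \emph{derived} group rather than~$C_K(\tilde\Ga)$ with~$K(\tilde\Ga)$ (whose orbits may genuinely refine those of~$C_K(\tilde\Ga)$, as the proposition just before this one acknowledges), but your conclusion stands anyway because the degeneracy curve furnished by~\ref{le:sp:dgstraight} lies in the \emph{identity component} of the centralizer and therefore cannot leave the single~$K(\tilde\Ga)$-orbit of~$\Ga'_1$.
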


\begin{proposition}
Let~$\Ga_1$ and~$\Ga_0$ be two reductions, and let us assume that~$\Ga_0$ is
not contained in the nilpotent cone~$\Nilpotent$ of~$\Gp$. Then, the following
statements are equivalent.
\begin{enumerate}
\item
The $K$-orbit in~$\Red$ containing~$\Ga_1$ is more general than the one
containing~$\Ga_0$.
\item
For any subvariety of reductions~$\Red(\tilde\Ga)$ containing~$\Ga_0$, there
exists a~$K$-conjugate of~$\Ga_1$ in~$\Red(\tilde\Ga)$
whose~$K(\tilde\Ga)$-orbit is more general than the one of~$\Ga_0$.
\item
There exists a subvariety of reductions~$\Red(\tilde\Ga)$ containing~$\Ga_0$
and a~$K$-conjugate of~$\Ga_1$ in~$\Red(\tilde\Ga)$
whose~$K(\tilde\Ga)$-orbit is more general than the one of~$\Ga_0$.
\end{enumerate}
\end{proposition}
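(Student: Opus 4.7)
The equivalence decomposes naturally into three implications. The implication $(3)\Rightarrow(1)$ is formal: the $K(\tilde\Ga)$-orbit of a point of $\Red(\tilde\Ga)$ is contained in the corresponding $K$-orbit in the ambient variety, and a closure relation inside a subvariety entails the same relation in the ambient one. The implication $(2)\Rightarrow(3)$ is equally formal once one observes that at least one subvariety of reductions contains $\Ga_0$; the hypothesis $\Ga_0 \not\subset \Nilpotent$ serves precisely to ensure that proper such subvarieties exist, making the statement substantive rather than tautological.

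The content of the proposition lies in the implication $(1)\Rightarrow(2)$, which I plan to deduce as a direct application of the rigidity of anisotropic tori~\ref{co:sp:ar}. Fix a subvariety $\Red(\tilde\Ga)$ containing $\Ga_0$. By construction $\tilde\Ga$ is a subspace of an ordinary reduction and, by~\ref{pr:sp:srdescr}, is contained in $\Ga_0$. Ordinary reductions are \CARTAN\ subspaces of~$\Gp$, hence consist of semi-simple elements, so $\tilde\Ga$ does too, and therefore $\tilde\Ga \subset \SemisimplePart{\Ga_0}$. Both $\Ga_0$ and $\Ga_1$ are reductions, hence \LIE\ algebras of algebraic subgroups by~\ref{co:sp:algred} and in particular closed under \CJ\ decomposition. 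Choosing an arc $C^o$ in $K$ pushing $\Ga_1$ toward $\Ga_0$, the rigidity corollary produces a $K$-conjugate $\Ga'_1$ of $\Ga_1$ that contains $\SemisimplePart{\Ga_0}$, together with a degeneration curve inside the centralizer $C_K(\SemisimplePart{\Ga_0})$ pushing $\Ga'_1$ to $\Ga_0$.

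The inclusion $\tilde\Ga \subset \SemisimplePart{\Ga_0}$ now does the remaining work: it yields both $\Ga'_1 \supset \tilde\Ga$, so that $\Ga'_1 \in \Red(\tilde\Ga)$, and $C_K(\SemisimplePart{\Ga_0}) \subset C_K(\tilde\Ga)$, so that the degeneration curve acts inside $C_K(\tilde\Ga)$. The orbits of $C_K(\tilde\Ga)$ and those of $K(\tilde\Ga)$ coincide on $\Red(\tilde\Ga)$ by Proposition~\ref{pr:sp:srcenter} applied to the symmetric pair $(C_G(\tilde\Ga),\theta)$, so this exhibits $\Ga_0$ in the closure of the $K(\tilde\Ga)$-orbit of $\Ga'_1$, which is exactly the content of~(2). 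No step appears genuinely delicate; the only verification that merits care is that $\SemisimplePart{\Ga_0}$ really is a linear subspace of $\Ga_0$, which rests on the fact that a sum of two commuting semi-simple elements is again semi-simple in characteristic zero. The rigidity theorem does all the substantive work.
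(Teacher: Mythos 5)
Your proof is correct and takes precisely the route the paper intends: the paper states this proposition (with its two companions) as an ``immediate consequence'' of the rigidity corollary~\ref{co:sp:ar} without further detail, and your argument is exactly the fleshing-out of that reduction, the key observation being that $\tilde\Ga\subset\SemisimplePart{\Ga_0}$ so that the rigidified degeneration lives in $C_K(\tilde\Ga)$ and the conjugate $\Ga'_1$ lies in $\Red(\tilde\Ga)$ by~\ref{pr:sp:srdescr}. The auxiliary points you invoke (algebraicity of reductions via~\ref{co:sp:algred} to get closure under the \CJ\ decomposition, and the non-nilpotence hypothesis to guarantee a subvariety of reductions through~$\Ga_0$ exists) are used the same way elsewhere in the paper.
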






\section{Special reductions}
\label{se:sp:reds}

The set~$\Red_s = \Red\setminus\Red_o$ of special reductions 
is the complement of an affine open set in a
projective variety, hence it is a
divisor in~$\Red$. This divisor is hard to describe in the general setting: we
can not even count its irreducible components. We are however able to show
that it is cut out by a smooth quadric of the ambiant space~(\ref{se:sp:reds}).

\subsubsection{Bilinear algebra and exterior algebra}
A~symmetric bilinear form~$b$ on a finite dimensional vector space~$E$ induces
a symmetric bilinear form~$\Alt^r b$ on the~$r$-th exterior power of~$E$. On~two decomposed $r$-vectors $u_1 \wedge \cdots \wedge u_r$ and~$v_1
\wedge \cdots \wedge v_r$ this bilinear form evaluates to the determinant of
the matrix with coefficients~$b(u_i,v_j)$. When~$b$ is regular,
so is~$\Alt^r b$.

\subsubsection{Special reductions}
The \KILLING\ form~$b$ of the reductive \LIE~algebra~$\Gg$ is preserved by
automorphisms of~$\Gg$. Consequently, the characteristic spaces~$\Gk$ and~$\Gp$
of~$\theta'$ are orthogonal, since they are also supplementary, the restriction
of~$b$ to any of them is regular. The quadratic form~$q$ associated to~$\Alt^r
b$ on~$\P(\Alt^r \Gp)$ defines a smooth quadric~$Q$.

\begin{proposition}
\label{pr:sp:redsing}
The set~$\Red_s$ of special reductions is the intersection of the variety of
reductions~$\Red$ with the quadric~$Q$.
\end{proposition}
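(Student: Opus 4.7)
The plan is to identify the condition $[\Ga] \in Q$ with degeneracy of the Killing form restricted to $\Ga$, and then to show that this degeneracy distinguishes special from ordinary reductions. Writing $(a_1, \ldots, a_r)$ for a basis of an $r$-plane $\Ga \subset \Gp$, the value of $\Alt^r b$ on $a_1 \wedge \cdots \wedge a_r$ is the Gram determinant $\det\bigl(b(a_i, a_j)\bigr)$, so $q(a_1 \wedge \cdots \wedge a_r) = 0$ is exactly the statement that $b|_\Ga$ is degenerate. The proposition therefore reduces to showing that $b|_\Ga$ is nondegenerate if and only if $\Ga \in \Red_o$.

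For the ordinary case I would use that a Cartan subspace is of the form $\germ{h} \cap \Gp$ for some $\theta$-stable Cartan subalgebra $\germ{h}$ of $\Gg$. Since $\theta$ preserves $b$, the $(+1)$- and $(-1)$-eigenspaces of $\theta'$ inside $\germ{h}$, namely $\germ{h} \cap \Gk$ and $\germ{h} \cap \Gp$, are $b$-orthogonal. They are also supplementary inside $\germ{h}$, on which $b$ is nondegenerate, so $b|_\Ga$ is nondegenerate.

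For the special case the strategy is to produce a nonzero nilpotent element in the radical of $b|_\Ga$. First, $\Ga \in \Red \subset \Ab$ is abelian: $[\Ga, \Ga] \subset \Ga \subset \Gp$ and simultaneously $[\Ga, \Ga] \subset [\Gp, \Gp] \subset \Gk$, forcing $[\Ga, \Ga] = 0$. By Corollary~\ref{co:sp:algred}, $\Ga$ is the \LIE\ algebra of an algebraic abelian subgroup of $G$, so it splits as $\Ga = \GaS \oplus \GaN$. If $\GaN$ were zero, $\Ga$ would be an $r$-dimensional toral subalgebra of $\Gp$, hence a Cartan subspace, contradicting $\Ga \in \Red_s$. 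Pick then a nonzero nilpotent element $n \in \GaN$. For any $x \in \Ga$ with~\CJ\ decomposition $x = x_s + x_n$ in $\Gg$, both $x_s$ and $x_n$ commute with $n$, so $\ad(n)\ad(x_s)$ and $\ad(n)\ad(x_n)$ are products of commuting operators with a nilpotent factor, hence nilpotent and of vanishing trace. Therefore $b(n, x) = 0$ for every $x \in \Ga$, placing $n$ in the radical of $b|_\Ga$.

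The main substantive input is the algebraicity of $\Ga$ supplied by Corollary~\ref{co:sp:algred}: without it we could not assert that $\Ga \in \Red_s$ must contain nonzero nilpotent directions, which is precisely what forces $b|_\Ga$ to degenerate. Everything else is routine bookkeeping: the Gram-determinant identity, the orthogonal splitting of a $\theta$-stable Cartan subalgebra, and the trace-zero property of a nilpotent operator commuting with another.
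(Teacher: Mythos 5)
Your proof is correct and follows the same overall skeleton as the paper's: identify $q(a_1\wedge\cdots\wedge a_r)$ with the Gram determinant $\det\bigl(b(a_i,a_j)\bigr)$, so that membership in~$Q$ means degeneracy of~$b|_{\Ga}$, and then show this degeneracy characterizes special reductions via a nonzero nilpotent element lying in the radical of~$b|_{\Ga}$ (your trace computation with $\ad(n)\ad(x)$ is exactly the paper's lemma, modulo an unnecessary splitting of~$x$ into $x_s+x_n$). The one genuinely different step is how you produce that nilpotent element. The paper proves an elementary lemma valid for \emph{every} anisotropic subalgebra $\Gu\in\Ab$: the semisimple parts of elements of~$\Gu$ lie in a common \CARTAN\ subspace~$\Ga$, the map $u\mapsto u_s$ is linear with kernel the nilpotent elements of~$\Gu$, and injectivity would force $\Gu\subset\pcentralizer(\Ga)=\Ga$, i.e.\ $\Gu=\Ga$ ordinary. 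You instead invoke Corollary~\ref{co:sp:algred} to get the algebraic splitting $\Ga=\GaS\oplus\GaN$ and rule out $\GaN=0$ by observing that an $r$-dimensional toral subalgebra of~$\Gp$ is already a \CARTAN\ subspace. Your route is valid (and there is no circularity, since the algebraicity corollary is independent of this section), but it imports a heavier tool --- the flatness/degeneration argument behind algebraicity --- and only covers points of~$\Red$, whereas the paper's lemma is self-contained and applies to all of~$\Ab$. Your direct verification of nondegeneracy on a \CARTAN\ subspace via a $\theta$-stable \CARTAN\ subalgebra is a reasonable substitute for the paper's citation of \TY.
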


To begin with, we state a proposition and two lemmas.

\begin{proposition}[\refcite{37.5.2}{TY}]
The bilinear form induced by the \KILLING\ form on a \CARTAN\ subspace
of~$\Gp$ is regular.
\end{proposition}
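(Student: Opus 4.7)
The plan is to exhibit a $\theta$-stable Cartan subalgebra $\Gh$ of $\Gg$ containing $\Ga$ and to exploit the orthogonality of $\Gk$ and $\Gp$ with respect to the Killing form $b$, so that non-degeneracy of $b$ on $\Ga$ drops out of the known non-degeneracy of $b$ on $\Gh$.

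First, by Proposition~\ref{pr:sp:spcentralizer}, the centralizer $\gcentralizer(\Ga) = \Ga \oplus \kcentralizer(\Ga)$. Because $\Ga$ is maximal among subspaces of $\Gp$ contained in a Cartan subalgebra, this centralizer is itself a Cartan subalgebra $\Gh$ of $\Gg$. Setting $\Gm = \kcentralizer(\Ga)$, we have $\Gh = \Ga \oplus \Gm$, and this is the eigenspace decomposition of the restriction of $\theta'$ to $\Gh$, with $\Ga \subset \Gp$ on the $(-1)$-side and $\Gm \subset \Gk$ on the $(+1)$-side.

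Next, I would recall that $b(\Gk,\Gp) = 0$. Indeed, $\theta'$ preserves $b$ since $\theta$ is an automorphism of $\Gg$; for $x \in \Gk$ and $y \in \Gp$ one computes $b(x,y) = b(\theta' x, \theta' y) = -b(x,y)$, whence the vanishing. Consequently $\Ga$ and $\Gm$ are mutually orthogonal under $b$, and the decomposition $\Gh = \Ga \oplus \Gm$ is orthogonal.

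Finally, the Killing form of a reductive Lie algebra is non-degenerate on any Cartan subalgebra, so $b|_\Gh$ is regular. Since an orthogonal direct sum decomposition of a regular quadratic space has regular restrictions to each summand, $b|_\Ga$ is regular as well. The only subtle point is verifying that $\gcentralizer(\Ga)$ truly is a Cartan subalgebra (not just nilpotent and self-centralizing), but this is exactly the content of the cited Proposition~\ref{pr:sp:spcentralizer} combined with the definition of a Cartan subspace; the rest of the argument is formal linear algebra.
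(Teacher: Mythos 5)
The paper offers no proof of this proposition at all---it is quoted from Tauvel and Yu~[37.5.2]---so your argument has to stand on its own. The orthogonality half of it is fine: $\theta'$ preserves the Killing form and acts with opposite signs on $\Gk$ and $\Gp$, so $b(\Gk,\Gp)=0$. The genuine gap is the claim that $\gcentralizer(\Ga)=\Ga\oplus\kcentralizer(\Ga)$ is a Cartan subalgebra of $\Gg$. Maximality of $\Ga$ constrains only the anisotropic part: it shows that $\germ{h}\cap\Gp=\Ga$ for any $\theta$-stable Cartan subalgebra $\germ{h}$ containing $\Ga$, but it says nothing about $\kcentralizer(\Ga)$, which the paper itself describes (after~(\ref{eq:sp:spdecoiso})) as a reductive and in general non-abelian Lie algebra $\Gm$. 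Concretely, for the pair $(E_6,F_4)$ one has $\dim\Gm=\dim\Gk-(\dim\Gp-r)=52-24=28$, so $\gcentralizer(\Ga)$ has dimension $30$, far exceeding the rank $6$ of $E_6$; it is not even nilpotent, let alone a Cartan subalgebra. The same failure occurs for $(\SL_6,\Sp_6)$. So the key fact you invoke---non-degeneracy of $b$ on a Cartan subalgebra---is being applied to something that is not one. (A secondary quibble: for a reductive, non-semi-simple $\Gg$ the literal Killing form is degenerate on the centre, which lies in every Cartan subalgebra; one must work with a fixed non-degenerate invariant form, as the paper does elsewhere.)

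The repair is short and keeps your strategy intact. Either take $\germ{t}$ a Cartan subalgebra of the reductive algebra $\Gm$; then $\germ{h}=\Ga\oplus\germ{t}$ \emph{is} a $\theta$-stable Cartan subalgebra of $\Gg$ with $\germ{h}\cap\Gp=\Ga$ by maximality, $b$ is non-degenerate on $\germ{h}$, and your orthogonal-sum argument applies verbatim. Or avoid Cartan subalgebras entirely: in the weight-space decomposition~(\ref{eq:sp:spweight}) the centralizer $\gcentralizer(\Ga)$ is $b$-orthogonal to every $\Gg(\Ga)_\alpha$ with $\alpha\neq0$, hence $b$ restricts non-degenerately to $\gcentralizer(\Ga)=\Ga\oplus\Gm$, and the orthogonality of $\Gk$ and $\Gp$ then forces non-degeneracy on the summand $\Ga$. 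Either way the conclusion holds; only the identification of the ambient non-degenerate subspace needs correcting.
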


\begin{lemma}
The bilinear form induced by the \KILLING\ form on an anisotropic subalgebra
of~$\Gp$ containing a nilpotent element is degenerate.
\end{lemma}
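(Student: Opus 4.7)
The plan is to exhibit the given nilpotent element itself as a nonzero vector in the radical of the restricted Killing form. The key observation is that an anisotropic subalgebra is automatically abelian: if $\Ga \subset \Gp$ is a subalgebra of~$\Gg$, then $[\Ga,\Ga]\subset\Ga\cap [\Gp,\Gp]\subset \Gp\cap\Gk=0$, using that the \CARTAN\ decomposition satisfies $[\Gp,\Gp]\subset\Gk$. Consequently $\ad(n)$ and $\ad(x)$ commute for every $x\in\Ga$.

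Next I would invoke that $n$, being a nilpotent element of the reductive \LIE\ algebra~$\Gg$, has a nilpotent adjoint endomorphism $\ad(n)$ on~$\Gg$. Since two commuting endomorphisms, one of which is nilpotent, have a nilpotent composition, $\ad(n)\ad(x)$ is nilpotent and hence of trace zero. Therefore
\[
  b(n,x)=\operatorname{tr}_{\Gg}\bigl(\ad(n)\ad(x)\bigr)=0
\]
for every $x\in\Ga$. Since $n$ is a nonzero vector of~$\Ga$ orthogonal to the whole of~$\Ga$, the restriction of~$b$ to~$\Ga$ has nontrivial radical, i.e.\ is degenerate.

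There is no real obstacle in this argument; the only point deserving care is the remark that makes anisotropic subalgebras abelian, so that commutativity of $\ad(n)$ with all $\ad(x)$ is immediate and one does not need to invoke \JACOBSON-\MOROZOV\ or any structural information about~$n$ beyond the nilpotence of~$\ad(n)$.
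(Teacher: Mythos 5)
Your argument is correct and is essentially identical to the paper's own proof: both exhibit the nilpotent element $n$ as lying in the radical of the restricted form, via the observation that $\ad(x)\circ\ad(n)$ is nilpotent (hence traceless) for every $x$ in the abelian subalgebra. The only difference is that you spell out why an anisotropic subalgebra is abelian ($[\Gp,\Gp]\subset\Gk$), which the paper takes for granted.
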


\begin{proof}
Let~$\Gu$ be an anisotropic subalgebra of~$\Gp$ containing a nilpotent
element~$n$. Since~$\Gu$ is abelian, endomorphisms~$\ad u \circ \ad n$ are
nilpotent for each~$u\in\Gu$, so that~$b(u,n) = 0$. Hence~$n$ lies in the
kernel of the restriction of~$b$ to~$\Gu$.
\end{proof}

\begin{lemma}
An anisotropic algebra contains a nilpotent element, unless it is an ordinary
reduction.
\end{lemma}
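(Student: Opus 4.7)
The plan is to show the contrapositive: an anisotropic algebra $\Gu$ containing no non-zero nilpotent element must be an ordinary reduction. The key is Theorem~\ref{th:sp:ardg}, which guarantees that closedness under \CJ\ decomposition propagates from a Cartan subspace to its degenerations; once $\Gu$ enjoys this property, the absence of non-zero nilpotent elements forces every element to be semisimple, and a dimension count identifies $\Gu$ with a Cartan subspace.

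First I would realise $\Gu\in\Red$ as the limit $\Ga_0$ of $c\cdot\Ga_1$ as $c$ approaches a point at infinity $c_0$ of some arc $C^o$ in $K$, where $\Ga_1$ is an ordinary reduction. The existence of such a datum follows from $\Red$ being the closure of $\Red_o$ and the fact that any two Cartan subspaces are $K$-conjugate. Since $\Ga_1$ consists entirely of semisimple elements, it is trivially closed under \CJ\ decomposition, so the second half of Theorem~\ref{th:sp:ardg} applies and yields that $\Gu = \Ga_0$ is closed under \CJ\ decomposition as well. Under the hypothesis that $\Gu$ contains no non-zero nilpotent element, the \CJ\ decomposition $u = u_s + u_n$ of any $u\in\Gu$ has $u_n \in \Gu$ nilpotent, hence zero; so every element of $\Gu$ is semisimple.

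To conclude, note that the bracket relation $[\Gp,\Gp]\subset\Gk$ combined with $\Gu\subset\Gp$ gives $[\Gu,\Gu]\subset\Gp\cap\Gk=0$, so $\Gu$ is a commuting family of semisimple elements of $\Gg$. Such a family is simultaneously diagonalisable, hence contained in a common Cartan subalgebra $\germ{t}$ of $\Gg$; therefore $\Gu\subset\germ{t}\cap\Gp$ sits inside some Cartan subspace $\Ga$ of $\Gp$ by the maximality clause in the definition. The equality $\dim\Gu = r = \dim\Ga$ forces $\Gu = \Ga$, so $\Gu$ is an ordinary reduction. The delicate step in this plan is securing the hypotheses of Theorem~\ref{th:sp:ardg}: one must verify that the Cartan subspace from which $\Gu$ degenerates is closed under \CJ\ decomposition, which is however immediate because its elements are semisimple; everything else is a routine simultaneous-diagonalisation and dimension count.
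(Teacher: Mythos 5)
Your argument is internally sound, but it proves a weaker statement than the lemma asserts, and the gap sits in your very first step. The lemma is about an arbitrary anisotropic algebra, i.e.\ an arbitrary point of~$\Ab$ (the paper's proof opens with ``Let~$\Gu\in\Ab$\dots''), whereas you immediately ``realise $\Gu\in\Red$ as the limit of $c\cdot\Ga_1$''. That realisation is only available for points of~$\Red = \overline{\Red_o}$; the paper stresses that $\Red$ is in general a \emph{strict} irreducible component of~$\Ab$, so a general $\Gu\in\Ab$ is not a degeneration of a \CARTAN\ subspace and Theorem~\ref{th:sp:ardg} gives you nothing. Closedness under the \CJ\ decomposition --- the crux of your argument --- is precisely what must be established for such a $\Gu$, and you have no route to it. (Your restricted version does suffice for the one place the lemma is used, Proposition~\ref{pr:sp:redsing}, which only concerns reductions; but it does not prove the lemma as stated.)

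The paper's proof is both more general and much lighter: the semi-simple parts $u_s$ of the elements of the abelian algebra $\Gu$ commute pairwise and lie in~$\Gp$, hence in a common \CARTAN\ subspace~$\Ga$; the \CJ\ decomposition then gives a \emph{linear} map $\Gu\to\Ga$, $u\mapsto u_s$, whose kernel is exactly the set of nilpotent elements of~$\Gu$. If that kernel is zero the image is all of the $r$-dimensional space~$\Ga$, and since every element of~$\Gu$ centralizes every~$u_s$, Proposition~\ref{pr:sp:spcentralizer} forces $\Gu\subset\pcentralizer(\Ga)=\Ga$, whence $\Gu=\Ga$ by dimension count. No degeneration, rigidity, or closedness under \CJ\ decomposition is needed. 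If you want to salvage your approach, replace the appeal to Theorem~\ref{th:sp:ardg} by this direct consideration of the map $u\mapsto u_s$; your concluding simultaneous-diagonalisation and dimension count is then essentially the same as the paper's finish.
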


\begin{proof}[Proof of the lemma]
Let~$\Gu\in\Ab$ be an anisotropic subalgebra of~$\Gg$ of
dimension~$r$ and let~$\Ga$ be a \CARTAN\ subspace of~$\Gg$ containing
the semi-simple parts of the elements of~$\Gu$. The~\CJ-decomposition induces
a linear map~$\Gu \to \Ga$ whose kernel is the set of nilpotent elements
in~$\Gu$. Since~$\pcentralizer(\Ga) = \Ga$~(\ref{pr:sp:spcentralizer})
the image of this linear map has rank~$r$ only if~$\Gu = \Ga$.
\end{proof}

\begin{proof}[Proof of~\ref{pr:sp:redsing}]
It follows from the previous lemmas that the set of reductions~$\Gu$ for which
the restriction of the \KILLING\ form to~$\Gu$ is degenerate is exactly the
set of special reductions. These points are also the ones where~$q$ vanishes,
so that~$\Red_s = \Red\cap Q$.
\end{proof}





\section{Partial positivity of the anticanonical class}
\label{se:sp:pp}

\subsubsection{Linear subspaces of \GRASSMANN\ varieties}
Recall that linear subspaces of $\ProjGp$ contained in~$\GrassGp$
are precisely the \GRASSMANN\ subvarieties of~$\GrassGp$ which are
also projective spaces. \GRASSMANN\ subvarieties of~$\GrassGp$ are the
sets
\[
  \Gamma(\Gv, \Gw) = \set{
    \Gu\in\Grass(r,\Gp)\mid \Gv\subset\Gu\subset\Gw
  }
\]
where~$\Gv$ and~$\Gw$ are linear subspaces of~$\Gp$. Its dimension is
$\dim{(\Gu/\Gv)}\dim{(\Gw/\Gu)}$, and it
is a linear space precisely when~$\dim{(\Gu/\Gv)} = 1$
or~when~$\dim{(\Gw/\Gu)} = 1$.

\subsubsection{Maximal linear subspaces of varieties of reductions}
Let us recall that a singular torus of~$S$ is a torus whose centralizer in~$G$
is not a maximal torus. We say an anisotropic reductive algebra
is \emph{singular} when its centralizer in~$\Gp$ is not a \CARTAN\ subspace.
Maximal singular anisotropic reductive algebras are the kernels of the roots
of~$\Gg$ relative to some \CARTAN\ subspace~$\Ga$~(\ref{eq:sp:spweight}).

\begin{proposition}
Let~$\Gz$ be a maximal singular anisotropic reductive algebra. The
subspace~$\Gamma(\Gz) = \Gamma(\Gz, \pcentralizer(\Gz))$ of~$\GrassGp$ is 
contained in~$\Red$, it pass through all points in~$\Red_o$
containing~$\Gz$.
\end{proposition}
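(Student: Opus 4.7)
The plan is to identify $\Gamma(\Gz)$ with the subvariety of reductions $\Red(\Gz)$ of Section~\ref{se:sp:sr}. Since $\Gz$ is the kernel of a root $\alpha$ relative to some \CARTAN\ subspace $\Ga$, it sits as a hyperplane inside the ordinary reduction $\Ga\in\Red_o$, so Proposition~\ref{pr:sp:srdescr} applies and yields $\Red(\Gz) = \{\,\Gu\in\Red\mid\Gu\supset\Gz\,\}$.

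The second assertion of the proposition follows immediately: any ordinary reduction $\Gu$ containing $\Gz$ is abelian, hence centralizes $\Gz$, so $\Gu\subset\pcentralizer(\Gz)$, that is, $\Gu\in\Gamma(\Gz)$. The same argument shows the containment $\Red(\Gz)\subset\Gamma(\Gz)$, since every element of $\Red(\Gz)\subset\Ab$ is an $r$-dimensional abelian subalgebra containing $\Gz$.

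For the main claim $\Gamma(\Gz)\subset\Red$ I aim to prove the stronger equality $\Gamma(\Gz) = \Red(\Gz)$. The strategy is to apply Proposition~\ref{pr:sp:srcenter} to the symmetric pair $(C_G(\Gz),\theta)$. A short computation shows that the anisotropic part of the centre of $(C_G(\Gz),\theta)$ equals $\Gz$: indeed, any vector in $\Gp$ commuting with the whole of $\gcentralizer(\Gz)$ must lie in $\Ga$, and annihilating the weight spaces $\Gg(\Ga)_{\pm\alpha}\subset\gcentralizer(\Gz)$ forces it into $\ker\alpha=\Gz$. The associated semi-simple pair therefore has rank $r-\dim\Gz=1$, and the isomorphism $j:\Red'\to\Red(\Gz)$ of Proposition~\ref{pr:sp:srcenter} given by $j(\Gv)=\Gv\oplus\Gz$ identifies $\Red(\Gz)$ with the variety of reductions $\Red'\subset\P(\Gp')$ of this rank-one semi-simple pair; the same formula extends to a bijection $\P(\Gp')\to\Gamma(\Gz)$, since $\pcentralizer(\Gz)=\Gp'\oplus\Gz$.

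The remaining and decisive step is to verify $\Red'=\P(\Gp')$ for a semi-simple rank-one pair, and I expect this to be the main technical point. In rank one, a line $\C v\subset\Gp'$ is a \CARTAN\ subspace precisely when $v$ is non-nilpotent: the minimal centralizer dimension is $1$, and any non-zero semi-simple $v$ gives $\pcentralizer(v)=\C v$, making $\C v$ automatically a \CARTAN\ subspace. Hence $\Red'_o$ is the complement in $\P(\Gp')$ of the projectivised nilpotent cone of $\Gp'$, which is open and dense, and taking closure gives $\Red'=\P(\Gp')$. Combining the identifications, $\Gamma(\Gz)=\Red(\Gz)\subset\Red$, as desired.
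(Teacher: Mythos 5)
Your argument is correct, but it takes a much longer road than the paper does. The paper's own proof is essentially three lines: any $\Gu$ with $\Gz\subset\Gu\subset\pcentralizer(\Gz)$ is $\Gz$ plus a line and hence automatically abelian, so the linear (hence irreducible) space $\Gamma(\Gz)$ lies in $\Ab$; it meets the subset $\Red_o$, which is open in $\Ab$, at the \CARTAN\ subspace $\Ga$; since $\Red=\overline{\Red_o}$ is an irreducible component of $\Ab$~(\ref{pr:sp:redirreducible}), the irreducible set $\Gamma(\Gz)$ is swallowed by $\Red$. No rigidity is needed. Your route instead goes through the subvariety-of-reductions machinery --- Proposition~\ref{pr:sp:srdescr} (whose proof rests on the rigidity theorem~\ref{co:sp:ar}), Proposition~\ref{pr:sp:srcenter}, and a reduction to the rank-one case where $\Red'=\P(\Gp')$. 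This is heavier, but it is not circular (those results are established earlier and independently of the present statement), and it buys strictly more: the exact identification $\Gamma(\Gz)=\Red(\Gz)=\set{\Gu\in\Red\mid\Gu\supset\Gz}$, which the paper's argument does not deliver by itself.

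Two small points deserve tightening. First, in rank one a line $\C v$ is a \CARTAN\ subspace when $v$ is nonzero and \emph{semi-simple}, not merely non-nilpotent; the two conditions do coincide in rank one (if $v=v_s+v_n$ with $v_s\neq 0$, then $v_n\in\pcentralizer(v_s)=\C v_s$, which consists of semi-simple elements, so $v_n=0$), but this needs a sentence --- and in any case all your argument requires is that $\Red'_o$ be dense in $\P(\Gp')$, which already follows from the density of semi-simple elements in $\Gp'$. Second, the containment $\Red(\Gz)\subset\Red$, used tacitly at the end, amounts to the fact that \CARTAN\ subspaces of $(C_G(\Gz),\theta)$ are \CARTAN\ subspaces of $(G,\theta)$ because the two pairs have the same rank; it is true and implicit in the paper's set-up, but worth stating.
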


\begin{proof}
Let~$\Ga$ be a \CARTAN\ subspace containing~$\Gz$ and~$\alpha$ a root of~$\Gg$
relative to~$\Ga$ whose kernel is~$\Gz$. The open subset~$\Red_o$ of the
irreducible component~$\Red$ of~$\Ab$ meets at~$\Ga$ the linear
space~$\Gamma(\Gz)$ contained in~$\Ab$. This linear space is therefore
contained in~$\Red$.
\end{proof}

\begin{theorem}
The linear subspaces~$\Gamma(\Gz)$ are maximal among the linear subspaces
of~$\Red$ passing through a general point.
\end{theorem}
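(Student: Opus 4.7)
The plan is to classify all linear subspaces of~$\Red$ passing through a general reduction~$\Ga\in\Red_o$, by combining the classical description of linear subspaces of a~\GRASSMANN\ variety with the abelian constraint defining~$\Ab$. Since~$\Ga$ is a \CARTAN\ subspace we have $\pcentralizer(\Ga) = \Ga$ by~(\ref{pr:sp:spcentralizer}), and any linear subspace of~$\ProjGp$ lying in~$\GrassGp$ and containing~$\Ga$ has the form~$\Gamma(\Gv,\Gw)$ with $\Gv \subset \Ga \subset \Gw$ and either $\dim(\Ga/\Gv) = 1$ or $\dim(\Gw/\Ga) = 1$.

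First I would rule out the case $\dim(\Gw/\Ga) = 1$ together with $d := \dim(\Ga/\Gv) \ge 2$: writing $\Gw = \Ga + \langle w\rangle$ with $w\notin\Ga$, each subspace $\Ga'' + \langle w \rangle$, as $\Ga''$ ranges over hyperplanes of~$\Ga$ containing~$\Gv$, lies in $L\subset\Ab$ and so must be abelian, forcing $[w,\Ga''] = 0$; because $d \ge 2$ these hyperplanes cover~$\Ga$, whence $w \in \pcentralizer(\Ga) = \Ga$, contradicting the choice of~$w$. Hence $\dim(\Ga/\Gv) = 1$ and $\Gv$ is a hyperplane of~$\Ga$.

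In that case, every element of~$L$ has the form $\Gv + \langle v \rangle$ for some $v\in\Gw$, and the abelian condition becomes $\Gw \subset \pcentralizer(\Gv)$, so that $L\subset \Gamma(\Gv,\pcentralizer(\Gv))$. The root decomposition~(\ref{eq:sp:spdecoaniso}) yields the explicit formula
\[
  \pcentralizer(\Gv) = \Ga \oplus \bigoplus_{\beta\in\Phi^+,\ \beta|_{\Gv}=0} \Gp(\Ga)_\beta,
\]
which is strictly larger than~$\Ga$ precisely when the linear form on~$\Ga$ cutting out~$\Gv$ is proportional to some root, i.e.~exactly when $\Gv$ coincides with a maximal singular anisotropic reductive algebra~$\Gz$ as recalled before the theorem. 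Then $L \subset \Gamma(\Gz)$, and the previous proposition guarantees that $\Gamma(\Gz)$ is itself a linear subspace of~$\Red$ through~$\Ga$, so maximality of~$L$ forces $L = \Gamma(\Gz)$; the same inclusion chain also shows that each $\Gamma(\Gz)$ is itself maximal. The one delicate step is eliminating the exotic case, where the genericity of~$\Ga$ enters through the identity $\pcentralizer(\Ga) = \Ga$; everything else is bookkeeping with the weight decomposition already recorded in the paper.
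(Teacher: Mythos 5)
Your proof is correct and takes essentially the same route as the paper's: both classify the linear subspaces through the general point~$\Ga$ as~$\Gamma(\Gv,\Gw)$, eliminate the case~$\dim(\Ga/\Gv)\ge 2$ by playing the abelian condition against~$\pcentralizer(\Ga)=\Ga$, and in the codimension-one case compute~$\pcentralizer(\Gv)$ via the root decomposition to force~$\Gv=\ker\alpha$ for some root~$\alpha$. The only cosmetic difference is that you sweep~$\Ga$ by hyperplanes containing~$\Gv$ where the paper pairs elements of~$\Ga$ and~$\Gw$ inside common members of the family, which amounts to the same computation.
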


\begin{proof}
Let~$\Ga\in\Red_o$ be a general point of~$\Red$, and~$\Gamma = \Gamma(\Gv,\Gw)$ a
linear subspace of~$\Red$ passing through~$\Ga$. We shall see
that~$\Gv$ needs to be a maximal singular anisotropic reductive algebra in
order to let~$\Gamma$ be maximal.

Assume that~$\Gv$ has codimension greater than~two in~$\Ga$, each pair
in~$\Ga\times\Gv$ has its members belonging to a point of~$\Gamma$,
now~$\Gamma\subset\Ab$,
hence~$\Gv\subset\pcentralizer(\Ga)$. But~$\pcentralizer(\Ga) =
\Ga$~(\ref{pr:sp:spcentralizer}) and~$\Gamma = \set{\Ga}$ is not
maximal.

Assume now that~$\Gv$ has codimension~one in~$\Ga$. Since~$\Gamma\subset\Ab$,
we have~$\Gv\subset\pcentralizer(\Gw)$. But this centralizer is
\[
\Ga \oplus \bigoplus \Gp(\Ga)_\alpha
\]
where the sum extends over the set of positive roots~$\alpha$ relative
to~$\Ga$ which vanish on~$\Gv$. In order to let~$\Gw$ be strictly bigger
than~$\Ga$, this set of roots must not be empty. There also exists a
root~$\alpha$ whose kernel contains~$\Gv$, and for dimension reasons~$\Gv =
\ker\alpha$ is a maximal singular anisotropic reductive
subalgebra. Thus~$\Gamma\subset\Gamma(\Gv)$, but~$\Gamma$ is maximal, it
equals~$\Gamma(\Gv)$. In case~$\pcentralizer(\Gv) = \Ga$ for each codimension
one subspace~$\Gv$ in~$\Ga$, the root system of~$\Gp$ relative to~$\Ga$ is
empty and~$\Gp=\Ga$: $\Red$ is a point and~$\Gamma(\Gv)$ is maximal.
\end{proof}

\begin{corollary}
Through a general point~$\Ga$ of~$\Red$ passes a finite number of maximal
linear subspaces of~$\Red$, meeting transversally in~$\Ga$. The intersection
of any two of these subspaces is~$\Ga$.
\end{corollary}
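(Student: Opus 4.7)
The plan is to unpack the preceding theorem: through a general point $\Ga\in\Red_o$, the maximal linear subspaces of $\Red$ are precisely the $\Gamma(\Gz)$ where $\Gz$ ranges over the maximal singular anisotropic reductive subalgebras of $\Ga$. By the description of such $\Gz$ as the kernels $\ker\alpha$ of roots $\alpha\in\Phi^+$ relative to $\Ga$, and since the root system $\Phi$ is finite, there are finitely many such subspaces. (In rank $r=1$ the statement is vacuous: $\Red_o$ is a point or $\Red=\mathbb{P}(\Gp)$ is already a linear space, and there is at most one maximal linear subspace through $\Ga$.)

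For the intersection claim, I would take two distinct positive roots $\alpha,\beta\in\Phi^+$ and suppose $\Gu$ lies in $\Gamma(\ker\alpha)\cap\Gamma(\ker\beta)$. Then $\Gu$ contains $\ker\alpha+\ker\beta$. Since $\alpha$ and $\beta$ are linearly independent as characters of $\Ga$, the subspaces $\ker\alpha$ and $\ker\beta$ are distinct hyperplanes of $\Ga$, so their sum is all of $\Ga$. As $\dim\Gu=\dim\Ga=r$, this forces $\Gu=\Ga$.

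For transversality, I would compute the tangent space to $\Gamma(\ker\alpha)$ at $\Ga$ inside $T_\Ga\Grass(r,\Gp)=\lhom(\Ga,\Gp/\Ga)$. A tangent vector to $\Gamma(\ker\alpha)$ at $\Ga$ is a linear map $\Ga\to\Gp/\Ga$ vanishing on $\ker\alpha$ and with image in $\pcentralizer(\ker\alpha)/\Ga$. By the proof of the theorem, $\pcentralizer(\ker\alpha)=\Ga\oplus\Gp(\Ga)_\alpha$, so this tangent space is naturally identified with $\Gp(\Ga)_\alpha$. Now, in the root space decomposition
\[
\Gp/\Ga\;=\;\bigoplus_{\alpha\in\Phi^+/\pm}\Gp(\Ga)_\alpha,
\]
the summands corresponding to distinct pairs $\{\pm\alpha\}$ are in direct sum, so the tangent spaces to the various $\Gamma(\ker\alpha)$ at $\Ga$ meet only at the origin and span a direct sum inside $T_\Ga\Red$. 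This is precisely transversality.

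The main conceptual step is the identification of the tangent space $T_\Ga\Gamma(\ker\alpha)$ with $\Gp(\Ga)_\alpha$; once this is in hand, the three assertions (finiteness, pairwise intersection, transversality) all follow mechanically from the finiteness of $\Phi$ and the directness of the root space decomposition~(\ref{eq:sp:spdecoaniso}). The only minor subtlety is to ensure $\Ga$ is chosen generic enough that every maximal linear subspace through it is of the form described by the theorem—which is exactly what the theorem asserts at a general $\Ga\in\Red_o$.
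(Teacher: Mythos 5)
Your proof is correct and takes essentially the route the paper intends: the paper's entire argument is the single remark that the corollary ``follows from the decomposition~(\ref{eq:sp:spdecoaniso})'', and your identification of~$T_\Ga\Gamma(\ker\alpha)$ with~$\Gp(\Ga)_\alpha$ together with the finiteness and directness of that root-space decomposition is exactly the unpacking of this remark. The only pedantic caveat is that when the restricted root system is non-reduced, $\pcentralizer(\ker\alpha)$ also contains~$\Gp(\Ga)_{2\alpha}$, so~$T_\Ga\Gamma(\ker\alpha)$ is the sum of the weight spaces of all positive roots proportional to~$\alpha$; this affects none of your three conclusions, since distinct maximal subspaces still correspond to non-proportional roots.
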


This follows from the decomposition~(\ref{eq:sp:spdecoaniso}). Notice that
these linear subspaces do not need to share a common dimension. However, we
can make this picture more accurate in the case of the variety of
\CARTAN\ reductions for a reductive group.

\begin{corollary}
Let~$\Red$ be the variety of \CARTAN\ reductions for a reductive group of
rank~$r$ and dimension~$2m + r$. Through a general point of~$\Red$ passes~$m$
projective planes, any two of them meeting transversally in this point.
\end{corollary}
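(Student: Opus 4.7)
The plan is to specialize the preceding theorem and corollary to the Cartesian square symmetric pair~$(G\times G,G)$, whose variety of reductions is the variety of \CARTAN\ reductions for~$G$.

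By the preceding corollary, through a general point~$\Ga\in\Red_o$ passes a finite number of maximal linear subspaces of~$\Red$, meeting transversally at~$\Ga$, and any two of them meet only in~$\Ga$. It therefore suffices to count these subspaces and to identify each of them as a projective plane.

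To count them, I would invoke the theorem: each maximal linear subspace through~$\Ga$ has the form~$\Gamma(\Gz)=\Gamma(\Gz,\pcentralizer(\Gz))$, where~$\Gz\subset\Ga$ is a maximal singular anisotropic reductive subalgebra, i.e.~$\Gz=\ker\alpha$ for some root~$\alpha\in\Phi$ of~$\Gg$ relative to~$\Ga$. For the Cartesian square, the paper has already noted that~$\Phi$ coincides with the root system of~$G$, hence~$|\Phi|=\dim G-r=2m$; and since the root system of a reductive group is reduced, distinct pairs~$\{\alpha,-\alpha\}$ give distinct kernels, producing exactly~$m$ candidate subalgebras~$\Gz$.

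To identify each~$\Gamma(\Gz)$ as a projective plane, I would show that~$\dim\Gp(\Ga)_\alpha=2$ for every root~$\alpha$. With~$\Gp$ identified to~$\Gg$ by~$z\mapsto(z,-z)$, a direct computation in~$\Gg\oplus\Gg$ yields~$\Gg(\Ga)_\lambda=\Gg_\lambda\oplus\Gg_{-\lambda}$ as a subspace of~$\Gg\oplus\Gg$, so~$(\Gg(\Ga)_\alpha\oplus\Gg(\Ga)_{-\alpha})\cap\Gp$ is the image of~$\Gg_\alpha\oplus\Gg_{-\alpha}$ under~$z\mapsto(z,-z)$, and is thus two-dimensional. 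Consequently~$\pcentralizer(\Gz)=\Ga\oplus\Gp(\Ga)_\alpha$ has dimension~$r+2$, and~$\Gamma(\Gz)$, parametrising the $r$-planes sandwiched between~$\Gz$ and~$\pcentralizer(\Gz)$, is the projective space on the $3$-dimensional quotient~$\pcentralizer(\Gz)/\Gz$, i.e.\ a projective plane. There is no serious obstacle here: every step reduces either to a dimension count or to a direct invocation of a result already in the paper.
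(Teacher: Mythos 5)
Your proposal is correct and follows exactly the route the paper intends: the paper states this corollary without a separate proof, leaving it to the reader to specialize the preceding theorem and corollary to the Cartesian square, where the root system is that of~$G$ itself (hence reduced, with~$m$ positive roots) and each~$\Gp(\Ga)_\alpha$ is two-dimensional, so that each~$\Gamma(\Gz)$ is a~$\P(\pcentralizer(\Gz)/\Gz)\simeq\P^2$. Your write-up simply makes these dimension counts explicit; there is nothing to object to.
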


\begin{proposition}
\label{pr:sp:linesmoothlocus}
A general line contained in~$\Red$ is contained in the smooth locus of~$\Red$.
\end{proposition}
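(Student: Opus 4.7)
The strategy is to show that a general line $\Delta \subset \Red$ lies inside a maximal linear subspace $\Gamma(\Gz)$ through a general point of $\Red_o$, and that every such $\Gamma(\Gz)$ is contained in the smooth locus of~$\Red$.

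Since $\Red_o$ is open and dense in $\Red$, a general line $\Delta$ passes through some $\Ga \in \Red_o$, and by the preceding corollary on maximal linear subspaces through a general point, $\Delta$ is contained in some $\Gamma(\Gz)$ with $\Gz = \ker\alpha$ the kernel in $\Ga$ of a root $\alpha$ of the Cartan subspace~$\Ga$. Using the decomposition $\pcentralizer(\Gz) = \Ga \oplus \Gp(\Ga)_\alpha$ from Proposition~\ref{pr:sp:spcentralizer}, one identifies $\Gamma(\Gz) \cong \P((\Ga/\Gz)\oplus\Gp(\Ga)_\alpha)$, which by Proposition~\ref{pr:sp:srdescr} coincides with the rank-one subvariety of reductions $\Red(\Gz)$ attached to the reductive symmetric pair $(C_G(\Gz),\theta)$.

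To show every $\Gu \in \Gamma(\Gz)$ is smooth on~$\Red$, one computes $T_\Gu\Ab \supset T_\Gu\Red$ using the defining equations $[u_1, a(u_2)] = [u_2, a(u_1)]$ from the proof of Theorem~\ref{th:sp:redregular}. Writing $\Gu = \Gz + \C v$ and decomposing along the root system of $\Ga$ in~$\Gp$, the equations with $u_1, u_2 \in \Gz$ pin down the $\beta$-component of $a|_\Gz$ for each $\beta \neq \pm\alpha$ (since $\beta|_\Gz$ is non-zero there), leaving only the $\pm\alpha$-components and the equations involving $v$ to analyze; these last are handled inside the rank-one symmetric pair $(C_G(\Gz),\theta)$. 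A direct count then yields $\dim T_\Gu\Ab = \dim\Red$ uniformly in $\Gu \in \Gamma(\Gz)$, so $\Gu$ is a smooth point of~$\Red$ since~$\Red$ is an irreducible component of $\Ab$ by Proposition~\ref{pr:sp:redirreducible}.

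The main obstacle is this dimension count at degenerate points~$\Gu$ where $v$ is, for example, nilpotent in the rank-one anisotropic space, since there the equations $[z, a(v)] = [v, a(z)]$ can behave unexpectedly. The rank-one reduction is essential: only the root $\alpha$ varies as $\Gu$ moves in $\Gamma(\Gz)$, so the delicate part of the computation is confined to a single root space and can be made explicit. Once the count is established throughout $\Gamma(\Gz)$, the inclusion $\Delta \subset \Gamma(\Gz)$ automatically places $\Delta$ in the smooth locus of~$\Red$.
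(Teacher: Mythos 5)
Your reduction to the rank-one subvariety $\Gamma(\Gz)\cong\P(\Gp')$ matches the paper's first step, but from there the two arguments diverge, and yours has a genuine gap. You aim to prove the stronger statement that \emph{every} point of $\Gamma(\Gz)$ is a smooth point of~$\Red$, via the tangent-space equations $[u_1,a(u_2)]=[u_2,a(u_1)]$ from the proof of~\ref{th:sp:redregular}. The crucial step --- ``a direct count then yields $\dim T_\Gu\Ab=\dim\Red$ uniformly in $\Gu\in\Gamma(\Gz)$'' --- is asserted, not performed, and you yourself flag the points where $v$ is an irregular nilpotent of the rank-one pair as ``the main obstacle'' without resolving it. That obstacle is real: the argument of~\ref{th:sp:redregular} hinges on $\Gu$ containing a regular element $u_0$ so that $\ad(u_0)|_{\Gp}$ has kernel exactly $\Gu$, and this fails precisely at the degenerate points you would need to handle. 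Worse, the statement you are trying to prove is likely false in general: the paper notes that for the Cartesian squares of $\Sp_4$ and $G_2$ the smooth locus \emph{equals} the image of the centralizer map, so reductions containing no regular element are singular; nothing prevents such reductions from lying on some $\Gamma(\Gz)$ when the rank-one pair $(C_G(\Gz),\theta)$ has irregular nonzero nilpotents.

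The paper sidesteps all of this by exploiting genericity rather than proving smoothness pointwise on $\Gamma(\Gz)$: after reducing to the rank-one pair, where $\Red=\P(\Gp')$, it invokes the \KOSTANT--\RALLIS\ result that the generic nilpotent element is regular, so the irregular locus has codimension at least~$2$ in $\P(\Gp')$; a general line therefore misses it, lands in $\Red_r=\Centralizer(\Regular)$, and is smooth by~\ref{th:sp:redregular}. To repair your argument you would either have to restrict to a general line in $\Gamma(\Gz)$ and make the same codimension-two count (at which point you have reproduced the paper's proof), or actually carry out the tangent-space computation at irregular points, which is both harder and, in general, doomed.
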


\begin{proof}
We show that a general line is contained in the image of the centralizer
map~(\ref{th:sp:redregular}). Such a line is contained in a
space~$\Gamma(\Gz)$, hence we can replace~$\Red$ by the variety of
reductions for the pair~$(C'_G(\Gz), \theta)$, whose group is the derived
group of the centralizer of~$\Gz$
in~$G$~(see~\ref{pr:sp:srdescr},
\ref{pr:sp:srcenter}). We are then reduced to the case of a reductive
symmetric pair of rank~one, where~$\Red = \P(\Gp)$. The generic nilpotent
element is regular~\cite[Theorem~3]{KR}, so that the irregular locus has
codimension at least~$2$ in~$\Red = \P(\Gp)$: a~generic line will miss~it.
\end{proof}

\subsubsection{Canonical class}

\begin{proposition}
\label{pr:sp:pphsmooth}
Let~$X$ be a smooth algebraic variety quasi homogeneous under the action of
an algebraic group~$G$. For any smooth rational curve~$C$ in~$X$ touching the open
orbit of~$X$, the \HILBERT\ scheme~$\HilbertS$ parametrising deformations of~$C$
in~$X$ is smooth at~$[C]$.
\end{proposition}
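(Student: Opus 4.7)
The plan is to use the standard deformation-theoretic criterion: the \HILBERT\ scheme $\HilbertS$ is smooth at $[C]$ provided that the obstruction space $H^1(C, N_{C/X})$ vanishes, and then to prove this vanishing by showing that $N_{C/X}$ is globally generated. Since $C$ is smooth rational, $C \simeq \P^1$, so every vector bundle on $C$ splits as a sum of line bundles $\Struct(a_i)$, and global generation is equivalent to all $a_i \geq 0$, which is exactly what kills $H^1$.

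The core step is to show that $T_X|_C$ is globally generated. For this I would use the infinitesimal action of the \LIE\ algebra $\Gg$ of $G$. This action produces a morphism of sheaves
\[
  \phi \colon \Gg \otimes \Struct_C \longrightarrow T_X|_C
\]
which is fibrewise surjective at every point of $C$ lying in the open orbit of $X$, because the tangent space to any orbit at a point is the image of $\Gg$ under the infinitesimal action. By hypothesis, $C$ meets the open orbit, so there is at least one point $p \in C$ where $\phi_p$ is surjective. Writing $T_X|_C = \bigoplus_{i=1}^n \Struct(b_i)$, if some $b_i$ were negative, then the composition of $\phi$ with the projection onto $\Struct(b_i)$ would vanish (since $\mathrm{Hom}(\Struct, \Struct(b_i)) = 0$ when $b_i < 0$), forcing the rank of $\phi$ to be strictly less than $n$ at every point, a contradiction. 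Hence all $b_i \geq 0$ and $T_X|_C$ is globally generated.

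From the normal bundle sequence
\[
  0 \longrightarrow T_C \longrightarrow T_X|_C \longrightarrow N_{C/X} \longrightarrow 0
\]
the bundle $N_{C/X}$ appears as a quotient of the globally generated bundle $T_X|_C$, and is therefore globally generated itself. On $\P^1$ this means every summand of $N_{C/X}$ has non-negative degree, so $H^1(C, N_{C/X}) = 0$, which completes the argument.

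The only delicate point is the step producing the fibrewise surjectivity of $\phi$ at a point of the open orbit; everything else is formal. The subtlety is that ``touching the open orbit'' should be read as $C$ meeting the open orbit, not merely meeting its closure; this is the precise hypothesis that grants a point $p\in C$ at which $\phi_p$ is surjective and thus triggers the degree estimate above. Once one accepts this reading, the rest of the proof is a routine application of the splitting principle on $\P^1$ and of the deformation-theoretic smoothness criterion for the \HILBERT\ scheme.
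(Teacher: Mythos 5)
Your proof is correct and follows essentially the same route as the paper: both deduce smoothness of the \HILBERT\ scheme from $H^1(N_{C/X})=0$, and both obtain this by using the global vector fields coming from the infinitesimal action of~$\Gg$, which span the tangent space at the points of~$C$ in the open orbit, together with the splitting of bundles on~$\P^1$ and the fact that a negative summand cannot be reached by global sections. The only cosmetic difference is that you first establish global generation of~$T_X|_C$ on all of~$C$ and pass to the quotient~$N_{C/X}$, whereas the paper argues directly on~$N_{C/X}$ restricted to~$C\cap X_o$; the substance is identical.
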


\begin{proof}
We show that the second cohomology group~$H^1(N_{C/X})$ of the normal bundle
to~$C$ in~$X$ vanishes, which occurs only if the \HILBERT\ scheme is
smooth at~$[C]$~\cite[Theorem~2.6]{DEBARRE}.

Let~$\theta:\Gg\times X \to TX$ be the morphism obtained by restricting the
map tangent to the group action~$G \times X \to X$. (Recall that $X$ is
embedded in~$TX$ \emph{via} the zero section.) Partial application of~$\theta$
gives a global section~$\theta_a: X \to TX$ of the tangent bundle from
any~$a\in\Gg$, and at any point~$x$ in the open orbit~$X_o$ of~$X$ under~$G$, the
stalk~$(TX)_x$ of the tangent bundle is generated by these global
sections. Thus, the restriction of the tangent bundle of~$X$ to~$C_o = X_o
\cap C$ is globally generated and~$\left.N_{C/X}\right\vert_{C_o}$ is
generated by global sections of~$N_{C/X}$.

Since~$C$ and~$X$ are smooth, the normal sheaf to~$C$ in~$X$ is locally free.
It splits as
\[
  N_{C/X} = \Struct(a_1)\oplus\cdots\oplus\Struct(a_r)
\]
for some integer vector~$a\in\Z^r$, and~$\Struct(1)$ being the tautological
bundle on~$C\simeq\P^1$. If~$a_i$ is negative, the bundle~$\Struct(a_i)$
has no global sections, and at any point~$c\in C_o$, global sections span at
most a hyperplane in~$\left.N_{C/X}\right\vert_c$. Therefore the
the~$a_i$'s are non negative and thus~$H^1(N_{C/X})= 0$.
\end{proof}

\begin{corollary}
\label{co:sp:pphsmooth}
Let $\Ga\in\Red_o$ be a general point of~$\Red$ and~$\Delta$ a line through~$\Ga$
contained in~$\Red$. Let~$m$ be the dimension of the maximal linear subspace
of~$\Red$ through~$\Ga$ containing~$\Delta$. If~$\Delta$ is contained in the
smooth locus of~$\Red$, then
\[
  - K_r \cdot \Delta = m + 1
\]
where~$K_r$ is the canonical class of the smooth locus of~$\Red$.
\end{corollary}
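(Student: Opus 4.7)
The strategy is to compute $\deg N_{\Delta/\Red}$ and deduce $-K_r\cdot\Delta$ from the adjunction formula on the smooth rational curve~$\Delta\subset\Red$:
\[
-K_r\cdot\Delta = 2 + \deg N_{\Delta/\Red}.
\]
Write $d := \dim\Red$. Proposition~\ref{pr:sp:pphsmooth}, applied to the smooth locus of~$\Red$ (quasi-homogeneous under~$K$ because $\Red_o$ is a dense $K$-orbit), ensures that the \HILBERT\ scheme~$\HilbertS$ parametrising lines in~$\Red$ is smooth at~$[\Delta]$ and, by the argument of that proof, that the decomposition $N_{\Delta/\Red} = \bigoplus_{i=1}^{d-1}\Struct(a_i)$ has all~$a_i\ge 0$. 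Consequently
\[
\dim_{[\Delta]}\HilbertS = h^0(N_{\Delta/\Red}) = (d-1) + \deg N_{\Delta/\Red}.
\]

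The same dimension would be computed by a count on the universal line $\mathcal{J}\subset\Red\times\HilbertS$ of pairs~$(x,\Gamma)$ with~$x\in\Gamma$. The projection $\mathcal{J}\to\HilbertS$ has fibre~$\P^1$, while the fibre of $\mathcal{J}\to\Red$ over an ordinary reduction~$\Ga'\in\Red_o$ is the set of lines of~$\Red$ through~$\Ga'$; by the classification of maximal linear subspaces through a general point of~$\Red$ as the $\Gamma(\Gz)$ with $\Gz=\ker\beta$ for~$\beta$ a root, this fibre is the disjoint union $\bigsqcup_{\beta>0}\P(\Gp(\Ga')_\beta)$. Let~$\alpha$ be the root such that the given maximal linear subspace $\Pi$ through~$\Ga$ containing~$\Delta$ equals $\Gamma(\ker\alpha)$, so that $m = \dim\Gp(\Ga)_\alpha$. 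The irreducible component of~$\mathcal{J}$ containing $(\Ga,[\Delta])$ then picks out the single summand indexed by~$\alpha$ and, by the $K$-equivariance of the root space decomposition~(\ref{eq:sp:spdecoaniso}), dominates~$\Red_o$ with general fibre $\P(\Gp(\Ga')_\alpha)\simeq\P^{m-1}$. Its dimension is therefore $d + m - 1$, whence $\dim_{[\Delta]}\HilbertS = d + m - 2$.

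Combining the two expressions yields $\deg N_{\Delta/\Red} = m - 1$ and therefore $-K_r\cdot\Delta = m + 1$. The delicate step is the identification of the relevant component of~$\mathcal{J}$: one must justify that the component through $(\Ga,[\Delta])$ really intersects the fibre over an ordinary reduction~$\Ga'$ in a single piece $\P(\Gp(\Ga')_\alpha)$ of the disjoint union $\bigsqcup_\beta\P(\Gp(\Ga')_\beta)$, and that this component dominates~$\Red_o$. Both reduce to the observation that the pairs~$(\Ga',\alpha')$ corresponding to lines in~$\Red$ of a fixed root type form a single $K$-orbit, which follows from the $K$-conjugacy of \CARTAN\ subspaces and the compatibility of the decomposition~(\ref{eq:sp:spdecoaniso}) with this action.
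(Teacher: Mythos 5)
Your argument follows essentially the same route as the paper: compute $\deg N_{\Delta/\Red}$ from the smoothness and unobstructedness of the Hilbert scheme at~$[\Delta]$ (via Proposition~\ref{pr:sp:pphsmooth}), then obtain $h^0(N_{\Delta/\Red})$ a second time from the dimension count on the incidence variety of pairs (point, line), whose fibre over a general ordinary reduction has dimension $m-1$. The only difference is that you spell out the identification of that fibre with $\P(\Gp(\Ga')_\alpha)$ via the classification of maximal linear subspaces, a step the paper asserts without detail; this is a correct and welcome elaboration, not a different proof.
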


\remark
The hypothesis on~$\Delta$ is always satisfied when this line is
sufficiently general~(\ref{pr:sp:linesmoothlocus}). When the variety of
\CARTAN\ reductions for a reductive group is under
consideration, we have~$- K_r\cdot\Delta = 3$, for any such line.

\begin{proof}
\newcommand\NormalBundle{{N_{\Delta/\Red}}}
According to the \RR\ formula,
\[
  \chi(\NormalBundle) =
    c_1(\NormalBundle)
    + \rank(\NormalBundle)(1 - g(\Delta))
    ,
\]
the rank~$\rank(\NormalBundle)$ is~$\dim\Red - \dim\Delta$ and the
genus~$g(\Delta)$ of~$\Delta\simeq\P^1$ vanishes. We compute:
\begin{align*}
  c_1(N_{\Delta/\Red}) 
  &= c_1(\left.T\Red\right\vert_\Delta) - c_1(T\Delta)\\
  &= c_1(\det\left.T\Red\right\vert_\Delta) -c_1(\Reg(2))\\
  &= - K_r \cdot \Delta - 2
  .
\end{align*}
By~(\ref{pr:sp:pphsmooth}) the~\HILBERT\ scheme~$\HilbertS_{[\Delta]}$
parametrising deformations of~$\Delta$ in~$\Red$ is smooth at~$[\Delta]$. Its
dimension at~$[\Delta]$ is~$h^0(\NormalBundle)$ and~$h^1(\NormalBundle) = 0$.
The~\RR\ formula eventually yields
\[
  h^0(\NormalBundle) = - K_r \cdot\Delta + \dim\Red - 3
  .
\]
We compute the dimension~$h^0(\NormalBundle)$ of~$\HilbertS_{\Delta]}$
another way. Put
\[
  Z = \set{ 
    (\Ga, \Alpha) \in\Red\times\HilbertS_{[\Delta]}
    \mid
    \Ga\in\Alpha
  }
  .
\]
The~$Z$ component~$(\Ga,\Delta)$ belongs to, projects to a subset of~$\Red$
containing~$\Red_o$ with general fiber of dimension~$m-1$, while this same
component projects to~$\HilbertS_{[\Delta]}$ with general fiber of
dimension~$1$. We can then compute
\[
  \dim\Red + m - 1 = 1 + h^0(\NormalBundle)
\]
hence~$K_r \cdot \Delta = -m - 1$.
\end{proof}





\section{Variety of anisotropic algebras}
\label{se:sp:aa}

In this section, the symmetric pair associated to the Cartesian square of a
simple group~$G$ of rank~$r$ is under consideration.

\subsubsection{Rough estimate of the dimension of a nilpotent orbit}
We say an orbit~$\Orbit$ in~$\Ab$ is~\emph{nilpotent} if it is the orbit of an
abelian algebra contained in the nilpotent cone~$\Nilpotent$ of~$\Gg$.

\begin{proposition}
\label{pr:sp:aanilporbit}
The dimension of a nilpotent orbit in~$\Ab$ is less than~$\dim G - r - 2$,
unless~$G$ has type~$A_1$.
\end{proposition}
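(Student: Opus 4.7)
By orbit--stabilizer, $\dim\Orbit=\dim G-\dim\gnormalizer(\Gu)$, so the asserted bound is equivalent to the Lie-algebra lower bound $\dim\gnormalizer(\Gu)\ge r+3$. The free inclusion $\Gu\subset\gcentralizer(\Gu)\subset\gnormalizer(\Gu)$ accounts for $r$ dimensions, and it is enough to exhibit three further normalizing directions transverse to $\Gu$.

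The plan rests on a Jacobson--Morozov triple. As $\Gu$ is an abelian algebra of nilpotent elements, Engel's theorem places it inside the nilradical $\Gn$ of a Borel subalgebra $\Borel=\Ga\oplus\Gn$ of $\Gg$; in particular $\Gu\subsetneq\Gn$ whenever $\Gg$ is simple and not of type $A_1$. I pick $x\in\Gu$ of maximal rank in $\Gg$ and complete it to an $\Gsl_2$-triple $(h,x,y)$. In the \emph{regular case} $\Gu=\gcentralizer(x)$, the self-centralizing property of a regular centralizer $\gcentralizer(\gcentralizer(x))=\gcentralizer(x)$ yields the crisp identity
\[
  \gnormalizer(\Gu)=\bigl\{v\in\Gg:[x,v]\in\Gu\bigr\}=\ker(\ad x)^{2}.
\]
Decomposing $\Gg=\bigoplus_{i=1}^{r}V_{2e_{i}}$ under the triple (the $e_{i}$ are the exponents of $\Gg$, all at least $1$, with no trivial summand since $\Gg$ is simple), each $V_{2e_{i}}$ of dimension $2e_{i}+1\ge 3$ contributes its top two weight spaces to $\ker(\ad x)^{2}$, so $\dim\gnormalizer(\Gu)=2r$.

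In the \emph{non-regular case}, no element of $\Gu$ is a regular nilpotent of $\Gg$, and for every $x\in\Gu$ the centralizer $\gcentralizer(x)$ has dimension strictly greater than $r$; intersecting over $x\in\Gu$ shows $\gcentralizer(\Gu)\supsetneq\Gu$, producing extra dimensions beyond the free $r$. The remaining normalizing directions come from the classical nilpotency fact that a proper subalgebra of the nilpotent Lie algebra $\Gn$ has a strictly larger normalizer within $\Gn$, applied to the chain $\Gu\subsetneq\gnormalizer_{\Gn}(\Gu)\subsetneq\cdots\subsetneq\Gn$, and from the action of $\ad h$ on $\Gu$ when $h$ can be chosen inside $\gnormalizer(\Gu)$.

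The main obstacle is a uniform accounting in the non-regular case: $\gcentralizer(x)$ is no longer abelian, $h$ need not normalize $\Gu$, and one has to argue that the extra directions harvested from $\gcentralizer(\Gu)$, from $\gnormalizer_{\Gn}(\Gu)$, and from the $\Gsl_{2}$-triple of $x$ remain linearly independent modulo $\Gu$. The exception for type $A_{1}$ is genuine because there $r=1$ and the crude bound $2r=2$ falls short of $r+3=4$; for all higher-rank simple types the $\Gsl_{2}$-analysis in the regular case combined with the nilpotent-chain argument in the non-regular case produces the required three transverse normalizing vectors.
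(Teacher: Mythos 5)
Your overall strategy --- bounding the orbit dimension from above by bounding $\dim\gnormalizer(\Gu)$ from below --- is the same as the paper's, and your regular case is correct and in fact sharper than anything in the paper: the identity $\gnormalizer(\Gu)=\ker(\ad x)^{2}$ for $\Gu=\gcentralizer(x)$, $x$ regular nilpotent, does hold (the nontrivial inclusion follows from $[x,[v,u]]=[[x,v],u]=0$, using that $[x,v]\in\gcentralizer(x)$ and $\gcentralizer(x)$ is abelian), giving exactly $2r$. But there are two genuine gaps. The non-regular case is not a proof, and the one concrete deduction you offer there is false: from ``$\dim\gcentralizer(x)>r$ for every $x\in\Gu$'' you cannot conclude $\gcentralizer(\Gu)\supsetneq\Gu$, since an intersection of subspaces each of dimension $>r$ containing $\Gu$ may well equal $\Gu$. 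Concretely, $\Gu=\langle E_{13},E_{23}\rangle\subset\Gsl_3$ is abelian of dimension $r=2$, contains no regular nilpotent, every nonzero $x\in\Gu$ has $\dim\gcentralizer(x)=4>2$, and yet $\gcentralizer(\Gu)=\Gu$. What remains --- harvesting transverse directions from normalizer chains in the nilradical and from $\ad h$, and checking their independence --- is precisely what you flag as ``the main obstacle,'' and it is the entire content of the proposition for non-regular orbits; no uniform mechanism is supplied. The paper avoids the regular/non-regular dichotomy altogether: it puts $\Gu$ inside the nilradical of a Borel, gets one extra normalizing direction there from Lie--Kolchin, and shows that if only one existed the generic $v\in\Gu$ would act on the quotient as a single Jordan block, forcing the number of positive roots to be at most $h+r-1$ ($h$ the Coxeter number) --- an inequality the table of root numbers excludes outside a short list of low-rank types, which are then settled by explicit case study.

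Second, your closing claim that ``all higher-rank simple types'' are covered is wrong in rank two: for $A_2$, $B_2$ and $G_2$ your regular case yields exactly $2r=r+2$ normalizing directions, short of the $r+3$ you need. Worse, your own computation shows that the orbit of $\gcentralizer(x)$ for $x$ a regular nilpotent of $\Gsl_3$ has dimension exactly $\dim G-2r=\dim G-r-2$, so the strict inequality you set out to prove is unattainable there. The bound must be read as non-strict (the paper's uniform argument likewise only produces $\dim\gnormalizer(\Gu)\ge r+2$, and it explicitly sets aside $A_2$, $A_3$, $B_2$ and $G_2$ for the case-by-case studies it cites). Any completed version of your argument has to either weaken the target to $\dim\gnormalizer(\Gu)\ge r+2$ or carve out the rank-two types separately.
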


\begin{proof}
\newcommand\vnormalizer{{\germ{n}_{\Gu}(\Gv)}}
Let~$B$ be a \BOREL\ subgroup of~$G$, $\Gu$ the \LIE\ algebra of its unipotent
radical and~$\Gv$ an abelian subalgebra of~$\Gu$ with dimension~$r$---any
nilpotent orbit in~$\Ab$ contains such a~$\Gv$. We bound from below the dimension
of the normalizer of~$\Gv$ in~$\Gg$ by the dimension of the
normalizer~$\vnormalizer$ of~$\Gv$ in~$\Gu$, thus obtaining a bound from above
for the dimension of the orbit through~$\Gv$.

According to the \LIE-\KOLCHIN\ theorem, endomorphisms of~$\Gu/\Gv$ adjoint to
elements in~$\Gv$ share a common nilvector, so~$\dim{\vnormalizer/\Gv} \ge
1$. Assume that this dimension is~$1$, and let~$v$ be the generic element
of~$\Gv$, and~$v'$ the endomorphism of~$\Gu/\Gv$ adjoint to~$v$. Since~$v$ is
generic, $\ker v' = \vnormalizer/\Gv$ has dimension~$1$ and~$v'$ is a cyclic
nilpotent endomorphism, whose~\JORDAN\ normal form consists of a single
block. The dimension of~$\vnormalizer/\Gv$ thus equals the degree of the minimal
polynomial of~$v'$. The endomorphism of~$\Gu$ adjoint to the generic element
of~$\Gu$ has minimal polynomial of degree~$h-1$, where~$h$ is
the~\COXETER\ number of~$G$. Since this minimal polynomial also vanishes
at~$v'$, we have
\[
  \dim \vnormalizer/\Gv \le h - 1
  .
\]
Let~$n/2 = \dim\Gu$ be the number of positive roots of~$G$, the former
inequality gives~$n/2 \le h + r - 1$ and from the
table~\ref{ta:sp:aacoxmal}~(see~\BOURBAKI~\cite{BBKI}) we infer this inequality
is only possible when~$G$ has type~$A_1$, $A_2$, $A_3$, $B_2$
or~$G_2$. Type~$A_1$ is not to be considered, types~$A_2$ and~$A_3$ have their
variety of \CARTAN\ reductions studied by~\IM~\cite{ILR,ILRGLN}, and
the remaining ones are studied in~\cite{COMPANION}.
\end{proof}

\begin{table}
\caption{Number of positive roots and \COXETER\ numbers}
\label{ta:sp:aacoxmal}
\begin{tabular}{cccc}
  Type & $n/2$ & $h$ & $h + r - 1$\\
  \hline
  $A_r$ & $r(r+1)/2$ & $r+1$ & $2r$\\
  $B_r$ & $r^2$ & $2r$ & $3r-1$\\
  $C_r$ & $r^2$ & $2r$ & $3r-1$\\
  $D_r$ & $r(r-1)$ & $2r-2$ & $3r-3$\\
  $E_6$ & $36$ & $12$ & $17$\\
  $E_7$ & $63$ & $12$ & $18$\\
  $E_8$ & $120$ & $30$ & $37$\\
  $F_4$ & $24$ & $12$ & $15$\\
  $G_2$ & $6$ & $6$ & $7$
\end{tabular}
\end{table}

\subsubsection{Infinitely many orbits}
\IM~\cite{ILRGLN} established that~$\Ab$ consists of infinitely many orbits,
using the \GRASSMANN\ variety associated with maximal nilpotent abelian
subalgebras in~$\germ{gl}_n$ for~$n\ge 6$. \MALCEV~\cite{MALCEV} classified abelian
subalgebras of maximal dimension in simple \LIE\ algebras, allowing us to
adapt the previous argument to show the
\begin{proposition}
\label{pr:sp:aainforbit}
The variety~$\Ab$ of abelian algebras consists of infinitely many
$G$-orbits if~$G$ has one of the following types:
\begin{equation*}
A_r \;(r \ge 5)\quad
B_r \;(r \ge 4)\quad
C_r \;(r \ge 5)\quad
D_r \;(r \ge 6)\quad
E_7\quad
E_8
.
\end{equation*}
\end{proposition}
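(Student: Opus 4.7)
The plan is to adapt the type-$A$ argument of~\IM\ to each of the listed simple types by invoking \MALCEV's classification of maximal abelian subalgebras. For each simple $\Gg$ under consideration, \MALCEV\ exhibits an abelian subalgebra $\Gm\subset\Gg$ of large dimension $d$; in every case $\Gm$ is the nilpotent radical of a maximal parabolic subalgebra, so that the normaliser $P = N_G(\Gm)$ has the dimension of that parabolic, namely $\dim L + d$ where $L$ is the associated Levi component.

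Since $\Gm$ is abelian, every $r$-dimensional subspace of~$\Gm$ is automatically abelian, and the \GRASSMANN\ variety $X_0 = \Grass(r,\Gm)$ sits in~$\Ab$ as a subvariety of dimension $r(d-r)$. I~would then analyse the $G$-saturation $X = G\cdot X_0\subseteq\Ab$ via the evaluation morphism $\varphi\colon G\times X_0 \to \Ab$, $(g,\Ga)\mapsto g\Ga$; the preliminary step is a rigidity claim asserting that a generic $r$-plane $\Ga\subset\Gm$ is contained in only finitely many $G$-conjugates of~$\Gm$. Granting this, the generic fibre of $\varphi$ has dimension exactly $\dim P$, and hence
\[
  \dim X = \dim G + r(d-r) - \dim P.
\]
If $\Ab$ contained only finitely many $G$-orbits, the same would hold for $X$, and $\dim X$ would then equal the maximum dimension of a nilpotent orbit meeting~$X$, which by~\ref{pr:sp:aanilporbit} is at most $\dim G - r - 2$. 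Combining with the displayed equation yields the forbidden inequality $r(d-r) + r + 2 \le \dim P$.

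The proof concludes with a case-by-case verification of the strict reverse inequality $r(d-r) + r + 2 > \dim P$, using \MALCEV's tabulated value of~$d$ in each type together with the standard dimension formula for $P$; it is precisely this numerical inequality that selects the thresholds $r\ge 5$ in types $A$ and $C$, $r\ge 4$ in type~$B$, $r\ge 6$ in type~$D$, and the inclusion of~$E_7$ and $E_8$. The main obstacle is the rigidity claim about the generic fibres of~$\varphi$: showing that a generic $r$-plane inside~$\Gm$ lies in only finitely many $G$-conjugates of~$\Gm$. In type $A$ this can be extracted from the explicit block-matrix description of~$\Gm$, as in~\IM; in the remaining types I~would expect an analogous argument by reconstructing~$\Gm$ from a canonical invariant attached to~$\Ga$—for instance from the derived series of~$\gcentralizer(\Ga)$, or from a subspace singled out by the action of the Levi~$L$ on~$\Gm$. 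Once the rigidity is in place, the rest is a purely numerical check.
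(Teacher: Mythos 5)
Your overall strategy --- take \MALCEV's maximal abelian nilpotent subalgebra $\Gm$, of dimension $d$, observe that $\Grass(r,\Gm)$ is an $r(d-r)$-dimensional family of nilpotent points of~$\Ab$, and play this against the bound of~\ref{pr:sp:aanilporbit} on the dimension of a nilpotent orbit --- is exactly the paper's. But your execution diverges in ways that open genuine gaps. First, the paper does not saturate: it compares $\dim\Grass(r,\Gm)=r(d-r)$ directly with the maximal dimension of a nilpotent orbit, so no statement about the generic fibres of $(g,\Ga)\mapsto g\Ga$ is ever needed. Your detour through $G\cdot\Grass(r,\Gm)$ makes the unproved ``rigidity claim'' (a generic $r$-plane of $\Gm$ lies in only finitely many conjugates of $\Gm$) load-bearing, and you offer no proof of it outside type~$A$; as it stands this is a missing step, not a routine one.

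Second, and more seriously, two of your concrete assertions fail. The subalgebra $\Gm$ is \emph{not} always the nilradical of a parabolic: in type $B_r$ ($r\ge 5$) \MALCEV's subalgebra of dimension $\frac{1}{2}r(r-1)+1$ is the centre of the Heisenberg-type nilradical of the last parabolic together with one further root vector (the largest \emph{abelian} nilradical of $B_r$ has dimension only $2r-1$), and $E_8$ has no abelian parabolic nilradicals at all, since every simple root occurs with coefficient at least $2$ in the highest root. So the identity $\dim N_G(\Gm)=\dim L+d$, on which your fibre computation rests, is unavailable in two of the listed types. Moreover the inequality $r(d-r)+r+2>\dim P$ that you claim ``precisely selects'' the list does not: taking $\dim P=\dim G-d$, it fails for $B_4$ (where $r(d-r)+r+2=18$ while $\dim P=29$), which \emph{is} in the list, and it holds for $C_4$ and $E_6$, which are not. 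The case-by-case verification has to be run against the orbit bound of~\ref{pr:sp:aanilporbit} itself, type by type, as in the paper's ``explicit computation,'' rather than against a formula for $\dim P$ that is only sometimes meaningful.
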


\begin{proof}
Let~$r$ be the rank of our semi-simple group, $n$ its number of roots, $m$ the
largest dimension an abelian unipotent subalgebra of~$\Gg$ can have, and~$\Gm$
an abelian unipotent subalgebra with this dimension. Let~$\Gv$ be an abelian
subalgebra of~$\Gg$ contained in~$\Gm$. On the one hand, $\Gv$ lies in the
nilpotent cone of~$\Gg$, according to~(\ref{pr:sp:aanilporbit}) the~$G$-orbit
through~$\Gv$ in~$\Ab$ has dimension less than~$r - 2$. On the other hand, the
set of all possible~$\Gv$ is the~\GRASSMANN\ variety~$\Grass(r,\Gm)$, which
has dimension~$r(m-r)$. We conclude by an explicit computation that
whenever~$r(m-r) > r - 2$, the action of~$G$
on~$\Ab$ must have infinitely many orbits.
\end{proof}

\begin{theorem}[\MALCEV]
\label{th:sp:aamalcev}
Each simple \LIE\ algebra with the exclusion of~$B_4$, $D_4$ and~$G_2$ has up
to automorphisms only one commutative subalgebra of maximal dimension with
nilpotent elements. This dimension equals~$[\frac{1}{4}(r-1)^2]$ for the
algebra~$A_r\;(r>2)$ (brackets stand for the integer part of their argument), 
$\frac{1}{2}r(r-1) + 1$ for~$B_r\;(r>4)$, $\frac{1}{2}r(r+1)$ for~$C_n$,
$\frac{1}{2}r(r-1)$ for~$D_n$, and $16$, $29$, $36$, $9$ and~$5$ respectively
for~$E_6$, $E_7$, $E_8$, $F_4$ and~$B_3$. The algebra~$B_4$ has two classes of
conjugate abelian subalgebras of maximal dimension~$7$, $D_4$ has two classes
of dimension~$6$ and~$G_2$ has three classes of dimension~$3$.  
\end{theorem}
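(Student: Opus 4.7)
The statement is classical, due to~\MALCEV; I sketch a strategy rather than
a full argument. The plan is to reduce the problem to combinatorics on the
root system, handle the classical types by explicit matrix models, and
treat the exceptional types by direct inspection.

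First, by a theorem of~\BOREL\ and Morozov any subalgebra of~$\Gg$
consisting of ad-nilpotent elements lies, after conjugation, in the
nilradical~$\Gu$ of a fixed \BOREL\ subalgebra~$\Borel$. The subvariety of
$\Grass(l,\Gu)$ parametrising abelian subalgebras of dimension~$l$ is
stable under a maximal torus~$T \subset \Borel$, and the limit of one of
its points along a generic one-parameter subgroup of~$T$ is a $T$-stable
abelian subalgebra of the same dimension, namely a sum of root
spaces~$\bigoplus_{\alpha \in \Psi} \Gg_\alpha$ attached to a
\emph{commutative subset} $\Psi \subseteq \Phi^+$ (i.e.\ $\alpha + \beta
\notin \Phi$ for $\alpha, \beta \in \Psi$). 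The maximum-dimension problem
thus reduces to finding the largest commutative subsets of~$\Phi^+$, and
the uniqueness question becomes one about \WEYL-group orbits on such
subsets.

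For the classical types the combinatorics becomes an explicit matrix
problem. In~$A_r = \Gsl_{r+1}$, a maximal commutative subset corresponds,
up to the \WEYL\ group, to the support of matrices of shape
$\left(\begin{smallmatrix} 0 & M \\ 0 & 0\end{smallmatrix}\right)$ with~$M$
a rectangular $k \times (r+1-k)$ block, whose dimension is maximised by
choosing~$k$ as close to~$(r+1)/2$ as possible. For~$B_r$, $C_r$, $D_r$,
one begins with the Siegel-type abelian nilradical (the
isotropic-subspace construction) and verifies by an additional combinatorial
step that its dimension, possibly incremented by a short-root contribution,
attains the maximum. The double-class phenomena in~$D_4$ and~$B_4$ are
accounted for by triality and the two orbits of maximal isotropic
subspaces of the defining quadratic form. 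The exceptional types
$E_6, E_7, E_8, F_4, G_2$ are handled by a finite enumeration of
commutative subsets of positive roots: a direct check yields the stated
dimensions~$16, 29, 36, 9, 5$ and a single \WEYL-class, except for~$G_2$
where the mixing of long and short roots produces three classes.

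The principal obstacle is the torus-degeneration step: one must argue that
the $T$-limit of a maximum-dimensional abelian subalgebra remains
maximum-dimensional, which relies on the closedness of the abelian
condition inside~$\Grass(l,\Gu)$ and on the flatness of the one-parameter
degeneration. The remaining work is a rather delicate case-by-case analysis
of the combinatorics of positive roots, which is where \MALCEV's argument
acquires most of its length, and where the exceptional behaviour of~$B_3$,
$B_4$, $D_4$ and~$G_2$ must be detected by direct inspection.
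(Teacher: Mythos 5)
The paper does not prove this statement at all: it is quoted verbatim from \MALCEV's article \cite{MALCEV} and used as a black box in Propositions~\ref{pr:sp:aainforbit} and~\ref{pr:sp:aanonalg}. So there is no internal proof to compare yours against, and I can only assess your sketch on its own terms. Your reduction of the \emph{dimension} problem is the standard, correct one: an abelian subalgebra of nilpotent elements is conjugate into the nilradical~$\Gu$ of a \BOREL\ subalgebra, the locus of abelian $l$-planes is closed in~$\Grass(l,\Gu)$ and stable under a maximal torus~$T$, and the \BOREL\ fixed point theorem (or a generic one-parameter degeneration) produces a $T$-fixed, hence root-space-spanned, abelian subalgebra of the same dimension; the maximal dimension is therefore the maximal size of a commutative set of positive roots. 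The genuine gap is the \emph{uniqueness up to automorphisms}, which is the part of the theorem actually exploited in~\ref{pr:sp:aanonalg}. The degeneration only shows that the $T$-orbit closure of a maximal abelian subalgebra \emph{contains} a sum of root spaces of the same dimension; it does not show that the original subalgebra is conjugate to that limit, and degenerations in a Grassmannian are in general not conjugations. Reducing uniqueness to ``\WEYL-group orbits on maximal commutative subsets'' therefore presupposes that every maximal abelian nilpotent subalgebra is conjugate to a $T$-stable one, which is precisely what has to be proved (Malcev does it by explicit normal-form computations in each type; a modern substitute would be a rigidity argument showing the maximal ones are abelian ideals of~$\Borel$, which again requires proof).

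Two further concrete slips. First, your list of exceptional dimensions $16,29,36,9,5$ is indexed against $E_6,E_7,E_8,F_4,G_2$, but in the statement the~$5$ belongs to~$B_3$; for~$G_2$ the maximal dimension is~$3$, attained by three classes. Second, for~$B_r$ the maximal abelian subalgebra of dimension $\frac{1}{2}r(r-1)+1$ is \emph{not} the abelian nilradical of any parabolic (the unique cominuscule nilradical of $\Gso_{2r+1}$ has dimension $2r-1$, which is smaller for $r>4$), so the phrase ``possibly incremented by a short-root contribution'' conceals a genuine piece of the case analysis rather than a routine check. Finally, note that your block computation in type~$A$ yields $\lfloor\frac{1}{4}(r+1)^2\rfloor$ for $\Gsl_{r+1}$, which does not agree with the formula $[\frac{1}{4}(r-1)^2]$ printed in the statement; one of the two indexing conventions is off, and since the theorem is being used quantitatively elsewhere in the paper this discrepancy is worth resolving explicitly.
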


\subsubsection{Non algebraic anisotropic algebras}
While the irreducible component~$\Red$ of~$\Ab$ contains only algebraic
subalgebras of~$\Gg$~(\ref{co:sp:algred}), this is not the case of~$\Ab$.

\begin{proposition}
\label{pr:sp:aanonalg}
If a semi-simple group has rank large enough, its variety~$\Ab$ of abelian algebras
contains non algebraic elements.
\end{proposition}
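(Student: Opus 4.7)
The plan is to construct an explicit $r$-dimensional abelian Lie subalgebra of $\Gg$ that fails to be the Lie algebra of any algebraic subgroup of $G$.

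I would first rely on the following criterion for algebraicity: an abelian Lie subalgebra $\Gv$ of $\Gg$ is algebraic if and only if it is stable under the \CJ\ decomposition and its semi-simple part $\SemisimplePart{\Gv}$ is a \emph{rational} subspace of some Cartan subalgebra~$\Ga$, meaning that it admits a basis in the coroot lattice of~$\Ga$. This is a consequence of the structure theorem identifying abelian algebraic groups with products of a torus by a commuting unipotent group, whose Lie algebras reconstruct the semi-simple and nilpotent parts of the given subalgebra.

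I would then choose a non-regular semi-simple element $s$ in a Cartan subalgebra~$\Ga$ such that the line $\C s$ is \emph{not} rational in the above sense, together with an abelian subspace $\Gv_n$ of nilpotent elements of $\gcentralizer(s)$ of dimension $r - 1$. Both constraints can be met simultaneously: $s$ is taken to lie in the kernel of sufficiently many roots so that the Levi $\gcentralizer(s)$ has large derived subalgebra, while the remaining coordinates of $s$ are chosen linearly independent over~$\mathbf{Q}$ so as to break rationality of $\C s$. Malcev's theorem~\ref{th:sp:aamalcev}, applied to the derived subalgebra of $\gcentralizer(s)$, provides the required $\Gv_n$ as soon as the rank of $G$ exceeds a type-dependent threshold, since the Malcev dimension grows quadratically in the rank of the Levi while the target $r - 1$ grows only linearly in~$r$. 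Setting $\Gv = \C s \oplus \Gv_n$ then gives an abelian Lie subalgebra of $\Gg$ of dimension~$r$, whose semi-simple part $\C s$ is not rational, so $\Gv$ is a non-algebraic point of~$\Ab$.

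The main technical obstacle is verifying the existence of such an~$s$ uniformly over the simple Dynkin types. This reduces to a finite case-by-case check that the Malcev dimension of a suitably chosen sub-Levi exceeds the linear target $r - 1$, and the resulting thresholds are in line with those appearing in proposition~\ref{pr:sp:aainforbit}.
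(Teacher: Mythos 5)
Your proof is correct, but it defeats algebraicity by a genuinely different mechanism than the paper does. Both arguments start from the same raw material---a degenerate semi-simple element~$s$ whose centralizer contains, by \MALCEV's theorem~(\ref{th:sp:aamalcev}), an abelian nilpotent subspace of dimension quadratic in the rank of the Levi---but they then invoke two different obstructions. The paper takes an $r$-dimensional abelian nilpotent space with basis~$m$ inside~$\gcentralizer(s)$ and considers the span of~$s+m_1,m_2,\dots,m_r$: this algebra is \emph{not closed under the \CJ\ decomposition} (the semi-simple part~$s$ of~$s+m_1$ does not lie in it), which already rules out algebraicity with no rationality considerations at all, and~$s$ may be taken to be a perfectly rational coweight. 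You instead build an algebra~$\C s\oplus\Gv_n$ that \emph{is} closed under the \CJ\ decomposition and kill algebraicity arithmetically, by making the line~$\C s$ irrational so that it cannot be the Lie algebra of a subtorus; your criterion (abelian algebraic if and only if \CJ-closed with semi-simple part tangent to a torus) is the standard structure theory of abelian algebraic groups, and the half you actually use is the easy half. The price of your route is a mildly stronger constraint on~$s$: a line cut out by roots is automatically rational, so the centre of the Levi~$\gcentralizer(s)$ must have dimension at least~$2$, i.e.\ the Levi has corank at least two, whereas the paper can work with a corank-one Levi; both constraints are harmless for large rank and reduce to the same finite \MALCEV-based dimension count (with the same caveat, which you should state as the paper does, that when the derived algebra of the Levi is not simple one sums the \MALCEV\ dimensions of its simple ideals). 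What your version buys is the sharper observation that~$\Ab$ contains non-algebraic points even among the subalgebras closed under the \CJ\ decomposition; what the paper's version buys is a simpler example and complete independence from any rationality discussion.
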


\begin{proof}
Let~$G$ be a semi-simple group of rank~$r$, and~$s$ a degenerate semi-simple
element of its \LIE\ algebra~$\Gg$. If~$r$ is large enough, the set of nilpotent
elements in~$\Gg$ commuting with~$s$ contains a $r$-dimensional linear
subspace: this is most easily seen when~$s$ is the coroot associated with
an \emph{extremal} node in the \DYNKIN\ diagram of~$G$, in this case the
centralizer of~$s$ has semi-simple part a simple group of~rank~$r-1$ and one
can readily use \MALCEV\ theorem~(\ref{th:sp:aamalcev}). If the semi-simple part
of the centralizer of~$s$ in~$G$ has multiple simple ideals, one has to
apply~\MALCEV\ theorem on each of them, to conclude. Let~$m$ be a basis of
such a space, the \LIE\ algebra spanned by~$s +
m_1$, $m_2$, \dots,~$m_r$, is non algebraic, since it is not closed under
the \CJ\ decomposition.
\end{proof}


\appendix



\section{Tempered moving frames above degeneracy curves in~Grassmann varieties}
\label{se:sp:mf}
We study moving
frames along degeneracy curves in \GRASSMANN\ varieties and prove the
technical result~(\ref{th:sp:mfbasis}).
We use here the language of degeneracies introduced in~\ref{ss:sp:dgcompl}.

\subsubsection{Notations}
Let~$E$ be a vector
space of finite dimension, $A_1$ a $r$-dimensional subspace of~$E$ and~$C^o$ a
smooth and irreducible curve in~$\GGL(E)$. We let~$A_0$ be a $r$-dimensional
subspace lying in the
closure of the curve~$C^o A_1$ in the \GRASSMANN\ variety~$\Grass(r,E)$
of~$r$-dimensional subspaces of~$E$.
We denote
by~$\Tautological(r,E)$ the tautological bundle~$\Tautological(r,E) \to
\Grass(r,E)$.  The principal bundle~$\Basis(r,E) \to \Grass(r,E)$ whose fiber
at~$A$ is the set of all basis of~$A$ is a subbundle of the $r$-th bundle power
of~$\Tautological(r,E)$. It maps onto the principal bundle~$\Flag(r,E) \to
\Grass(r,E)$ whose fiber at~$A$ is the set of complete flags of~$A$.  If~$x$
is a basis of~$A$, we call its image in~$\Flag(r,E)$ the complete flag
corresponding to~$x$. Given a complete flag~$\Flag$ in~$A$, we say that a
basis~$x$ of~$A$ corresponds to~$\Flag$ when its image in~$\Flag(r,E)$
is~$\Flag$.

\subsubsection{Tempered moving frames along degeneracy curves}
Let~$A_1$ be a $r$-di\-men\-sion\-al subspace of~$E$ and $K$ an algebraic
group acting
linearly on~$E$. We choose a point~$A_0$ lying in the closure of the orbit~$K
A_1$ of~$K$ in~$\Grass(r,E)$ containing~$A_1$, a degeneracy curve~$C^o$ in~$K$
and~$c_0$ a point at infinity taking~$A_1$ to~$A_0$. If~$x$ is a basis
of~$A_1$, we obtain a moving frame along~$C^o A_1$ by sending~$c$ to the
point~$(cA_1, cx)$ of~$\Basis(r,E)\subset\Grass(r,E)\times E^r$.

This moving frame will usually not extend at~$c_0$. In the first place, it
will not do
because the vectors~$cx_k$ may become infinitely small or infinitely
large. In the second place, the lines defined by two
vectors~$cx_i$ and~$cx_j$ may degenerate to a single line. If~$cx_i$
is asymptotically smaller than~$cx_j$, no linear combination of~$cx_i$
and~$cx_j$ can yield a second direction at~$c_0$.

The first difficulty is circumvented by replacing~$cx$ by the
associated tempered moving frame: it is obtained from~$cx$ by rescaling its
terms~(\ref{de:sp:tempered}). We are able to characterize the basis
in~$A_1$ for which the second difficulty does not occur.
They are the basis of~$A_1$ whose associated
flag is a refinement of some partial flag in~$A_1$ determined by~$C^o$. We
call this partial flag the~\emph{magnitude orders flag}, it is defined
in~(\ref{ss:sp:mfflag}).
\medbreak

From now on we denote by~$R_0$ the local ring of the smooth completion~$C$
of~$C^o$ at~$c_0$, by~$\Gm_0$ its maximal ideal, by~$L_0$ its ring of
fractions and~by~$\epsilon\in\Gm_0$ a local parameter of~$C$
at~$c_0$. As the irreducible smooth curve~$C^o$ uniquely determines its smooth
completion~$C$, we call~$R_0$ the local ring of~$C^o$ at~$c_0$ and~$\epsilon$
a local parameter of~$C^o$ at~$c_o$.

\subsubsection{Filtration by the magnitude order}
We consider a linear representation~$V$ of~$K$, for instance an
exterior power of~$E$. For any vector~$v\in
V$, the rational map~$c v$ from~$C$ to~$V$ is an element of~$L_0\otimes
V$. Thus, the asymptotic behaviour of vectors in~$V$
under the operation of~$c$ near~$c_0$ is related to the structure of
the~$R_0$-module~$L_0 \otimes V$. 

The $L_0$-vector space~$L_0\otimes V$ is a $R_0$-module filtered by
the submodules~$\Gm_0^k (R_0\otimes V)$ for~$k \in \Z$. For any~$v\in
L_0\otimes V\setminus\set{0}$
we call the number
\[
\omega(v) = \sup\set{k \in \Z \mid \Gm_0^k (R_0\otimes V) \ni v }
\]
the \emph{magnitude order} of~$v$ near~$c_0$, or briefly the magnitude order
of~$v$. Note that the zero vector has no magnitude order. For
any~$v_1$, $v_2$ in~$L_0\otimes V\setminus\set{0}$ we have
\[
\omega(v_1 + v_2)
\ge
\min\set{\omega(v_1),\omega(v_2)}
,
\]
unless~$v_1 + v_2 = 0$.

\subsubsection{Magnitude orders flag and corresponding basis}
\label{ss:sp:mfflag}
We describe a filtration on~$A_1$ associated to the filtration
of~$L_0\otimes E$ by the magnitude order, which yields a partial flag
in~$A_1$. In the theory of the module $L_0 \otimes E$ over the
principal ideal domain~$R_0$, this flag is related to the invariant
factors of the submodule~$R_0 \otimes A_1$ of the finitely generated
module~$R_0 \otimes E$.

To any integer~$k$ we associate the subspace~$(A_1)_k$ of~$A_1$ defined by
\[
  (A_1)_k = \set{
    x \in A_1\setminus\set{0} \mid \omega(cx) \ge k 
  } \cup\set{0}
  .
\]
Let~$(a_1, \dots, a_m)$ be the increasing sequence of integers~$k$ such
that~$(A_1)_k$ is a strict superset of~$(A_1)_{k+1}$. We thus
have~$(A_1)_{a_1} = A_1$ and~$(A_1)_{1 + a_m} = \set{0}$, and it is
convenient to put~$a_{m+1} = 1 + a_m$. This yields a partial flag
\[
\set{
(A_1)_{a_1}
\supset 
\cdots
\supset
(A_1)_{a_m}
}
\]
of~$A_1$.

\begin{definition}
The partial flag~$\Flag_1^\omega = \set{
(A_1)_{a_1}
\supset 
\cdots
\supset
(A_1)_{a_m}
}$ of~$A_1$ is the \emph{magnitude orders flag}.
A \emph{magnitude orders basis} of~$A_1$ is a basis~$x\in A_1^r$ of~$A_1$
corresponding to a complete flag finer than the magnitude orders flag.
\end{definition}

The natural map~$\Basis(r,E) \to \Flag(r,E)$ is~onto, which implies
the existence of magnitude orders basis of~$A_1$.

\begin{definition}
\label{de:sp:tempered}
Let~$x$ be basis of~$A_1$. The tempered moving frame associated to~$x$ is the
map sending~$c\in C^o$ to~$(cA_1, \epsilon^{-\omega(cx_1)} cx_1, \dots,
\epsilon^{-\omega(cx_r)} cx_r)$.
\end{definition}

\subsubsection{Extension of tempered moving frames}
Our characterization of tempered moving frames that extend at infinity is the
following

\begin{theorem}
\label{th:sp:mfbasis}
Let~$x$ be a basis of~$A_1$. The following
conditions are equivalent:
\begin{enumerate}
\item
$x$ is a magnitude orders basis of~$A_1$;
\item
the finite sequence~$(\omega(cx_k))_{1\le k \le r}$ is non-decreasing and
for any~$1\le k \le r$ we have
\[
  \omega(cx_1 \wedge \cdots \wedge cx_k) 
  =
  \sum_{i=1}^k \omega(cx_i)
\]
for magnitude orders in the~$R_0$-module $\Alt^k(L_0\otimes E)$.
\item
the tempered moving frame associated to~$x$
extends at~$A_0 = c_0 A_1 $, and its value at~$c_0$ is a basis of~$A_0$.
\end{enumerate}
\end{theorem}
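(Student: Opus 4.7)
The plan is to use the structure theory of finitely generated modules over the discrete valuation ring $R_0$. Let $E_0 = R_0 \otimes_\C E$, a free $R_0$-module of rank $\dim E$, equipped with the $\epsilon$-adic filtration by the submodules $\epsilon^k E_0$. By definition of $\omega$, this filtration on $L_0 \otimes E$ is exactly what computes the magnitude order. Because $\Grass(r,E)$ is complete, the family $c \mapsto cA_1$ on $C^o$ extends uniquely to a morphism on $C$ sending $c_0$ to $A_0$, and this extension corresponds to a rank-$r$ free direct summand $N \subset E_0$ with $N/\epsilon N \cong A_0$ and $N \otimes_{R_0} L_0$ equal to the $L_0$-span of $\{cx_1,\dots,cx_r\}$ inside $L_0\otimes E$.

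The central reformulation is the following: for any $v\in N\otimes L_0$ one has $\omega(v)=\sup\{k\in\Z\mid v\in\epsilon^k N\}$, since $\epsilon^k N = N \cap \epsilon^k E_0$. Consequently, the $\C$-linear injection $A_1\hookrightarrow N\otimes L_0$, $x\mapsto cx$, pulls back the $\epsilon$-adic filtration of $N$ to precisely the magnitude orders filtration $(A_1)_k$ on $A_1$. With this dictionary, condition (1) becomes the statement that the flag of $(x_1,\dots,x_r)$ in $A_1$ is adapted to the filtration inherited from $N$, and I would translate (3) through Nakayama's lemma: each $\epsilon^{-\omega(cx_k)} cx_k$ lies in $N\setminus\epsilon N$ by construction, so its image at $c_0$ is a nonzero element of $N/\epsilon N=A_0$, and the $r$ images form a basis of $A_0$ if and only if the $r$ vectors form an $R_0$-basis of $N$. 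Computing in the rank-one module $\Alt^r N\cong R_0$, this happens exactly when $\omega\bigl(\epsilon^{-\sum_k \omega(cx_k)} cx_1\wedge\cdots\wedge cx_r\bigr)=0$, i.e. when the $k=r$ wedge identity of (2) holds.

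For the full equivalence I would reason as follows. Assuming (1), the refinement of the MO flag by the flag of $x$ means that, after grouping the $x_k$ according to which step $(A_1)_{a_i}$ they first enter, their magnitude orders take the fixed values $a_i$, so the sequence $(\omega(cx_k))$ is monotone and the reductions $\epsilon^{-\omega(cx_k)} cx_k \bmod \epsilon N$ sit in distinct graded pieces of the filtration on $N/\epsilon N$, hence are linearly independent; this yields both (2) and (3). The implication (2) $\Rightarrow$ (3) is the Nakayama argument above applied at the top wedge. For (3) $\Rightarrow$ (1), starting from an $R_0$-basis of $N$ given by the tempered vectors and applying the elementary divisor theorem to the chain of inclusions $(A_1)_{a_m}\otimes R_0\subset\cdots\subset A_1\otimes R_0$ inside $N$ (via $x\mapsto cx$), the invariant factors force the basis $x$ to be adapted to the MO flag. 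The main technical obstacle will be this last step: setting up the correct comparison between the abstract Smith normal form invariants of the submodule generated by $\{cx_1,\dots,cx_r\}$ in $E_0$ and the combinatorial data of the MO flag, ensuring that partial wedge magnitudes faithfully detect the flag position of each partial span $\langle x_1,\dots,x_k\rangle$.
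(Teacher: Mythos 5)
Your strategy is the same one the paper indicates for this theorem: the paper gives no written proof, only the remark that the hard implication is proved ``by a variation of Smith's algorithm'' over the principal ideal domain~$R_0$, condition~(2) being read as the invertibility in~$R_0$ of the principal minors of the matrix of the tempered frame. Your module-theoretic packaging is exactly the intrinsic form of that: the stalk~$N\subset E_0=R_0\otimes E$ of the pulled-back tautological bundle is a free direct summand with $N/\epsilon N=A_0$, so $\epsilon^kN=N\cap\epsilon^kE_0$ and the magnitude order on $N\otimes L_0$ is the $\epsilon$-adic valuation of~$N$; the Nakayama step then identifies~(3) with the tempered top wedge generating $\Alt^rN\cong R_0$, i.e.\ with the $k=r$ identity of~(2), and (since $\omega(v\wedge w)\ge\omega(v)+\omega(w)$) the top identity forces all the partial ones. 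This is correct and, to my mind, cleaner than running Smith's algorithm on coordinates; it disposes of what the paper calls ``the hard work'' in a few lines.

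Two caveats. First, your own reduction shows that~(3) is equivalent to the single top-wedge identity, which is invariant under permuting $x_1,\dots,x_r$, whereas~(1) (the flag of the ordered basis refines the magnitude orders flag) and the monotonicity clause of~(2) are not permutation-invariant: a reordered magnitude orders basis still satisfies~(3) but not~(1). So the implications $(3)\Rightarrow(1)$ and $(3)\Rightarrow(2)$ can only hold after a reordering of the basis, and your proposal asserts them without supplying that step; you should either make the reordering explicit or note that the residues $\epsilon^{-\omega(cx_k)}cx_k\bmod\epsilon N$ determine the order type through the filtration that $R_0\langle cx_1,\dots,cx_r\rangle$ induces on $N/\epsilon N$. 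Second, the comparison you defer to the end --- that the elementary divisors of the submodule generated by the $cx_k$ inside~$N$ coincide, with multiplicities, with the jumps $a_1,\dots,a_m$ of the magnitude orders flag, and that adaptedness of the flag of~$x$ is equivalent to the partial wedge identities --- is precisely the ``rather lengthy'' part the paper declines to write out; as it stands your argument for $(1)\Rightarrow(3)$ (linear independence of residues lying in \emph{distinct} graded pieces, plus independence \emph{within} a graded piece) and for $(3)\Rightarrow(1)$ is a plan rather than a proof. The plan is the right one, but the theorem is not yet proved by it.
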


We will only give a few indications about its proof, for it is rather lengthy
and has little to do with the geometry of varieties of reductions. The hard
work is the proof that~(2) implies~(3). This can be done by using a variation
of Smith's algorithm computing the normal form of a matrix whose coefficients
are in the principal ideal domain~$R_0$. From this standpoint~(2) means
that the principal minors of the matrix whose columns are the coordinates of
the tempered frame associated to~$x$ are invertible in~$R_0$.

We also state without proof the following technical result:

\begin{proposition}
\label{pr:sp:mfdeco}
Let~$A_1 = B^1 \oplus \cdots\oplus B^l$ be a direct sum decomposition
of~$A_1$. There exists a magnitude order basis of~$A_1$ whose terms are
in~$B^1 \cup \cdots \cup B^l$.
\end{proposition}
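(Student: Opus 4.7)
The plan is a double induction: an outer induction on the number $m$ of strict jumps in the magnitude orders filtration of $A_1$, and an inner induction on $\dim A_1$. The base case $m = 1$ is immediate, since every nonzero element of $A_1$ then has the same magnitude order, every basis is a magnitude orders basis, and one may simply concatenate arbitrarily chosen bases of the individual summands $B^i$.

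For the inductive step with $m \ge 2$, the key reduction is the following claim: the smallest piece $(A_1)_{a_m}$ of the magnitude orders flag admits a basis whose vectors belong to $\bigcup_i B^i$. Granting the claim, I would pick such a basis $y_1, \ldots, y_d$ of $(A_1)_{a_m}$ with $y_\ell \in B^{i_\ell}$, then pick complements $\tilde B^{i_\ell}$ of $\langle y_\ell\rangle$ inside $B^{i_\ell}$ (grouped together when several $y_\ell$'s share the same summand), obtaining a codimension-$d$ complement
\[
A_1' = \bigoplus_\ell \tilde B^{i_\ell} \oplus \bigoplus_{i \notin \{i_\ell\}} B^i
\]
of $(A_1)_{a_m}$ in $A_1$, whose summands are each contained in one of the $B^i$. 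The magnitude orders filtration restricted to $A_1'$ has strictly fewer jumps, so the inner induction produces a magnitude orders basis $(x_1, \ldots, x_{r-d})$ of $A_1'$ in $\bigcup_i B^i$. The concatenation $(x_1, \ldots, x_{r-d}, y_1, \ldots, y_d)$ is then the sought basis of $A_1$, and condition~(2) of Theorem~\ref{th:sp:mfbasis} can be verified by combining the inductive multilinear identity on $A_1'$ with the fact that all of the $y_\ell$ sit in the bottom piece $(A_1)_{a_m}$, which makes the wedge of tempered representatives at $c_0$ split cleanly between the $A_1'$ part and the $(A_1)_{a_m}$ part.

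The main obstacle is the claim about $(A_1)_{a_m}$. The naive approach of choosing a basis inside $\sum_i(B^i \cap (A_1)_{a_m})$ can fall short, because when leading-order terms from different $B^i$ cancel the inclusion
\[
\sum_i\bigl(B^i\cap (A_1)_{a_m}\bigr) \subseteq (A_1)_{a_m}
\]
may be strict: a vector of $(A_1)_{a_m}$ may have all of its individual $B^i$-components of strictly smaller magnitude order, the order $a_m$ appearing only after cancellation. Overcoming this requires working in the associated graded module $\bigoplus_k (A_1)_k/(A_1)_{k+1}$ together with the images of the $B^i$, identifying where each summand contributes to the $a_m$-graded piece modulo higher order, and then lifting these graded generators back to honest vectors inside $\bigcup_i B^i$ of the correct magnitude order. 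In spirit this is a block variant of Smith's normal form over the discrete valuation ring $R_0$, in which the admissible column operations are restricted to stay inside a single summand $B^i$ (supplemented only by rescalings by powers of $\epsilon$); the technical heart of the proof lies in showing that these restricted operations suffice to exhibit the required vectors, tying the construction back to the tempered moving frame machinery used for Theorem~\ref{th:sp:mfbasis}.
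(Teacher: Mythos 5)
First, a point of comparison: the paper gives no proof of Proposition~\ref{pr:sp:mfdeco} at all --- it is explicitly ``stated without proof'' at the end of the appendix --- so your attempt can only be judged on its own terms, and unfortunately it cannot be completed. You have correctly isolated the crux, namely that a vector of $(A_1)_{a_m}$ may acquire its magnitude order only through cancellation between components lying in different summands $B^i$; but the assertion that this can be overcome by passing to the associated graded module and running a block Smith algorithm with column operations confined to a single summand is exactly where the argument breaks down. The required lifts need not exist, because your key claim --- that $(A_1)_{a_m}$ admits a basis drawn from $B^1\cup\cdots\cup B^l$ --- is false, and with it the proposition as literally stated for arbitrary direct sum decompositions.

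Here is a counterexample. Take $E=\C^3$, $K$ the diagonal torus, $C^o$ the arc $c_t=\mathrm{diag}(1,t,t)$ with $c_0$ the point $t=0$, and $A_1=\langle e_1+e_2,\,e_3\rangle$, so that $A_0=\langle e_1,e_3\rangle$ and the magnitude orders flag is $A_1\supset(A_1)_1=\langle e_3\rangle$. For the splitting $B^1=\langle e_1+e_2+e_3\rangle$, $B^2=\langle e_1+e_2-e_3\rangle$, both generators have magnitude order $0$, and $(B^1\cup B^2)\cap\langle e_3\rangle=\{0\}$: there is simply no vector of $B^1\cup B^2$ of magnitude order $1$, so nothing to lift to, and no rescaling or within-summand operation can create one. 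Concretely, every basis drawn from $B^1\cup B^2$ has associated flag containing $\langle e_1+e_2\pm e_3\rangle$ rather than $\langle e_3\rangle$, and its tempered moving frame degenerates to the non-basis $(e_1,e_1)$ at $c_0$, violating condition~(3) of Theorem~\ref{th:sp:mfbasis}. Thus the obstacle you flagged is not a technical nuisance but a genuine obstruction: any correct version of the proposition must assume a compatibility between the splitting and the magnitude filtration, for instance $(A_1)_k=\bigoplus_i\bigl(B^i\cap(A_1)_k\bigr)$ for all $k$ --- under which hypothesis your double induction does go through essentially as written, since the bottom piece then visibly has a basis in $\bigcup_i B^i$. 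In the paper's sole application of this proposition, inside the proof of Theorem~\ref{th:sp:ardg}, the splitting is $\Ga_1=\SemisimplePart{\Ga_1}\oplus\NilpotentPart{\Ga_1}$, and the needed compatibility would have to be extracted from the fact that $K$ preserves the semi-simple and nilpotent cones; supplying that argument is the genuinely missing ingredient, in your proof and in the paper alike.
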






\section{Closures of decomposition classes}

We use the rigidity theorem~\ref{co:sp:ar} to show that the closure of a
decomposition class is a union of decomposition class. It seems that this was
yet only proved for the reductive symmetric pairs associated to the Cartesian
square of a reductive group~\cite[39.2.7]{TY}.

\begin{theorem}
\label{th:sp:dcclosure}
Let~$(G,\theta)$ be a reductive symmetric pair, and~$\Gp$ its anisotropic
space. For any~$x$ in~$\Gp$, the closure of the decomposition
class~$\Decomposition(x)$ of~$x$ in~$\Gp$ is a union of decomposition classes.
\end{theorem}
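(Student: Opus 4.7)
The plan is to show that if $y_0 \in \overline{\Decomposition(x)}$, then the entire decomposition class $\Decomposition(y_0)$ lies in $\overline{\Decomposition(x)}$. By~(\ref{co:sp:dclc}) one has $\Decomposition(y_0) = K \cdot \dcentralizer(y_0)_o$, so by $K$-invariance of the closure it suffices to show $\dcentralizer(y_0)_o \subset \overline{\Decomposition(x)}$. For $y_1 \in \dcentralizer(y_0)_o$, proposition~(\ref{pr:sp:dcdcent}) places $y_0$ and $y_1$ in the same decomposition class; the defining relation of this class yields $g \in K$ with $g\,\pcentralizer(y_{0,s}) = \pcentralizer(y_{1,s})$ and $gy_{0,n} = y_{1,n}$. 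Replacing $y_0$ by $gy_0$ reduces the problem to the concrete claim: if $y_0 = s_0 + n \in \overline{\Decomposition(x)}$ and $s_1 \in \Gp$ is semisimple with $\pcentralizer(s_1) = \pcentralizer(s_0) =: \Gm$, then $y_1 := s_1 + n \in \overline{\Decomposition(x)}$.

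I next set up a rigidified degeneration toward $y_0$. Pick a smooth arc $C^o \subset K$ and $x_1 \in \dcentralizer(x)_o$ with $c_t x_1 \to y_0$. The double centralizer $\dcentralizer(x_1)$ is an anisotropic abelian subalgebra closed under the~\CJ-decomposition, because every $v \in \dcentralizer(x_1)$ commutes with $\pcentralizer(x_1)$ and hence so do $v_s$ and $v_n$. The curve $c_t$ pushes $\dcentralizer(x_1)$ in the Grassmannian toward some $\Ga_0 \ni y_0$. Applying the rigidity corollary~(\ref{co:sp:ar}), after replacing $x_1$ by a $K$-conjugate, $\SemisimplePart{\Ga_0} \subset \dcentralizer(x_1)$ and $c_t \in C_K(\SemisimplePart{\Ga_0})$; by~(\ref{th:sp:ardg}), $\Ga_0$ is itself closed under the~\CJ-decomposition, so $s_0 = y_{0,s} \in \SemisimplePart{\Ga_0} \subset \dcentralizer(x_1)$, and since $\dcentralizer(x_1)$ is abelian it is contained in $\Gm = \pcentralizer(s_0)$. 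Using the tempered moving frame~(\ref{th:sp:mfbasis}), every $v \in \Ga_0$ is a limit $\lim c_t w_t$ with $w_t \in \dcentralizer(x_1)$; choosing $w_t$ generic in $\dcentralizer(x_1)_o$ gives $c_t w_t \in \Decomposition(x)$, so $v \in \overline{\Decomposition(x)}$ and $\Ga_0 \subset \overline{\Decomposition(x)}$.

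To reach $y_1 = s_1 + n$, which need not lie in this particular $\Ga_0$, I interpolate between $s_0$ and $s_1$ inside the anisotropic centre $Z(\Gm) := \{v \in \Gm : [v,\Gm] = 0\}$ of the sub-pair $(C_G(s_0),\theta)$. Both $s_0$ and $s_1$ belong to $Z(\Gm)$, and $Z(\Gm)$ centralises all of $\dcentralizer(x_1) \subset \Gm$, so each member of the one-parameter family $\dcentralizer(x_1) + \C\bigl((1-\lambda)s_0 + \lambda s_1\bigr)$ is again an anisotropic abelian subalgebra closed under the~\CJ-decomposition. Working inside $(C_G(s_0),\theta)$ and invoking~(\ref{co:sp:ar}) afresh for each such enlarged algebra, combined with a translation in $Z(\Gm)$ sending $s_0$ to the new semisimple element, produces a Grassmannian limit containing $(1-\lambda)s_0 + \lambda s_1 + n$; the argument of the previous paragraph places this limit inside $\overline{\Decomposition(x)}$, and specialising $\lambda = 1$ yields $y_1 \in \overline{\Decomposition(x)}$. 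The main obstacle is this interpolation step: the rigidity corollary controls only one degeneration at a time, not a family, so the $K$-conjugations must be chosen carefully so that $(1-\lambda)s_0 + \lambda s_1$ remains a semisimple element of the limit algebra as $\lambda$ varies. The commutativity of $Z(\Gm)$ is essential here: it lets $\dcentralizer(x_1)$ be enlarged by a central line without leaving the class of anisotropic abelian \CJ-closed subalgebras to which~(\ref{co:sp:ar}) applies.
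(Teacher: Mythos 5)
Your reduction and the first two thirds of your argument track the paper's proof closely: the paper likewise writes $\Decomposition(x) = K\cdot\dcentralizer(x)_o$, pushes the \CJ-closed algebra $\dcentralizer(x)$ along a degeneracy curve toward a limit $\Ga_0$ containing $y_0$, and invokes the rigidity corollary~(\ref{co:sp:ar}) to put $s_0 = y_{0,s}$ inside a conjugate of $\dcentralizer(x)$ with the new curve lying in $C_K(\SemisimplePart{\Ga_0})$. (One small imprecision on the way: $y_0\in\overline{K\cdot\dcentralizer(x)_o}$ does not let you fix a single $x_1$ with $c_tx_1\to y_0$; the paper takes a curve in $K\times\dcentralizer(x)_o$ and lets the element vary with the parameter, which costs nothing in what follows.)

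The genuine gap is the final interpolation step, which you yourself flag as the main obstacle; as written it is not a proof, and the mechanism you propose is the wrong one. There is nothing to enlarge: once rigidity gives $s_0\in\dcentralizer(x_1)$, you get $\pcentralizer(x_1)\subset\pcentralizer(s_0)$, hence $Z(\Gm)=\dcentralizer(s_0)\subset\dcentralizer(x_1)$, so $s_1$ and the whole segment $(1-\lambda)s_0+\lambda s_1$ already lie in $\dcentralizer(x_1)$. Moreover, applying~(\ref{co:sp:ar}) to an algebra strictly larger than $\dcentralizer(x_1)$ cannot help: the Grassmannian limit of such an algebra consists of limits of points that need not belong to $\Decomposition(x)$, so its membership in $\overline{\Decomposition(x)}$ does not follow. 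What actually closes the argument---and is what the paper's terse ``$\dcentralizer(y_s)\subset\dcentralizer(x_s)\subset\dcentralizer(x)$, so we are done'' amounts to---is that the rigidified curve fixes $s_1-s_0$ pointwise: for $v\in\dcentralizer(s_0)$ and $\xi\in\kcentralizer(s_0)$ one has $[\xi,v]\in\pcentralizer(s_0)$, hence $(\ad v)^2\xi=0$, and if $v$ is semi-simple this forces $[\xi,v]=0$; thus the identity component of $C_K(s_0)$, which contains the rigidified curve, fixes every semi-simple element of $\dcentralizer(s_0)$, in particular $s_0$ and $s_1$. Then $c_t(w_t+s_1-s_0)=c_tw_t+(s_1-s_0)\to(s_0+n)+(s_1-s_0)=s_1+n$ with $w_t+s_1-s_0\in\dcentralizer(x_1)\subset\overline{K\cdot\dcentralizer(x)_o}$, so $s_1+n\in\overline{\Decomposition(x)}$ as required. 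In short: same strategy as the paper, but the decisive last step is missing, and it needs this pointwise-fixing observation rather than a family of rigidity applications.
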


\begin{proof}
Let~$K = (G^\theta)^o$ be the connected component of the fixed point group
of~$\theta$.  According to~\cite[39.5.2]{TY} we have:
\[
\Decomposition(x)
 = K \cdot (\dcentralizer(x_s)_o + x_n) 
 = K \cdot (\dcentralizer(x)_o) 
\]
(see~(\ref{eq:sp:dcdcentgen}) for the definition of~$\dcentralizer(x_s)_o$
and~$\dcentralizer(x)_o$). Let~$y$ be a point in the
closure~$\bar\Decomposition(x)$ of~$\Decomposition(x)$ in~$\Gp$. We show
that~$\Decomposition(y)$ is contained in~$\bar\Decomposition(x)$. Since this
closure is~$K$-stable, it is enough to prove that~$\dcentralizer(y_s) + y_n
\subset \bar\Decomposition(x)$.

Let~$\Gamma^o \subset K \times \dcentralizer(x)_o$ be a smooth curve
such that~$y$ lies in the closure of the image of~$\Gamma^o$ by the map
sending~$(g,z)$ to~$gz$. For all~$(g,z)$ in~$\Gamma^o$ we
have~$\dcentralizer(gz) = g\dcentralizer(x)$ so that the projection~$C^o$
of~$\Gamma^o$ in~$K$ pushes the anisotropic algebra~$\dcentralizer(x)$ toward
an anisotropic algebra containing~$y$. A double centralizer is closed
under~\CJ\ decomposition, hence we can assume by the rigidity
theorem~\ref{co:sp:ar} that~$y_s$ belongs to~$\dcentralizer(x)$
and~$C^o$ is a subset of the centralizer of~$y_s$
in~$K$. But then~$\dcentralizer(y_s) \subset \dcentralizer(x_s) \subset
\dcentralizer(x)$ so we are done.
\end{proof}


\bibliographystyle{ieeetr}
\bibliography{redgen}
\end{document}